\newcommand{\Fg}{\mathfrak{g}}
\newcommand{\Fh}{\mathfrak{h}}
\newcommand{\Fsl}{\mathfrak{sl}}
\newcommand{\Fsp}{\mathfrak{sp}}
\newcommand{\CB}{\mathcal{B}}
\newcommand{\CG}{\mathcal{G}}
\newcommand{\CR}{\mathcal{R}}
\newcommand{\edge}{\mathcal{E}}
\newcommand{\ggms}{\mathcal{V}}
\newcommand{\pwp}{\mathcal{P}}
\newcommand{\MV}{\text{\rm MV}}
\newcommand{\mv}{\mathcal{MV}}
\newcommand{\BC}{\mathbb{C}}
\newcommand{\BR}{\mathbb{R}}
\newcommand{\BZ}{\mathbb{Z}}
\newcommand{\GL}{\mathop{\rm GL}\nolimits}
\newcommand{\Hom}{\mathop{\rm Hom}\nolimits}
\newcommand{\wt}{\mathop{\rm wt}\nolimits}
\newcommand{\Aut}{\mathop{\rm Aut}\nolimits}
\newcommand{\pair}[2]{\langle #1,\,#2 \rangle}
\newcommand{\ve}{\varepsilon}
\newcommand{\vp}{\varphi}
\newcommand{\bzero}{{\bf 0}}
\newcommand{\bi}{{\bf i}}
\newcommand{\bj}{{\bf j}}
\newcommand{\ti}[1]{\widetilde{#1}}
\newcommand{\ha}[1]{\widehat{#1}}
\renewcommand\section{\@startsection{section}{1}{0pt}
{-3.5ex plus -1ex minus -.2ex}{1.0ex plus .2ex}{\large\bf}}
\renewcommand\subsection{\@startsection{subsection}{1}{0pt}
{2.5ex plus 1ex minus .2ex}{-1em}{\bf}}
\newcommand{\svsp}{\vspace{1.5mm}}
\newcommand{\vsp}{\vspace{3mm}}
\theoremstyle{plain}
\newtheorem{thm}{Theorem}[subsection]
\newtheorem{lem}[thm]{Lemma}
\newtheorem{prop}[thm]{Proposition}
\newtheorem{cor}[thm]{Corollary}
\newtheorem{claim}{Claim}[thm]
\theoremstyle{definition}
\newtheorem{dfn}[thm]{Definition}
\theoremstyle{remark}
\newtheorem{rem}[thm]{Remark}
\begin{document}

\setlength{\baselineskip}{20pt}

\title{\Large\bf
A modification of the Anderson-Mirkovi\'c conjecture \\
for Mirkovi\'c-Vilonen polytopes in types $B$ and $C$}
\author{
 Satoshi Naito \\ 
 \small Institute of Mathematics, University of Tsukuba, \\
 \small Tsukuba, Ibaraki 305-8571, Japan \ 
 (e-mail: {\tt naito@math.tsukuba.ac.jp})
 \\[2mm] and \\[2mm]
 Daisuke Sagaki \\ 
 \small Institute of Mathematics, University of Tsukuba, \\
 \small Tsukuba, Ibaraki 305-8571, Japan \ 
 (e-mail: {\tt sagaki@math.tsukuba.ac.jp})
}
\date{}
\maketitle

%
\begin{abstract} \setlength{\baselineskip}{16pt}
We give an explicit description of the (lowering) 
Kashiwara operators on Mirkovi\'c-Vilonen polytopes in 
types $B$ and $C$, which provides a simple method 
for generating Mirkovi\'c-Vilonen polytopes inductively. 
This description can be thought of as a modification of 
the original Anderson-Mirkovi\'c conjecture, 
which Kamnitzer proved in the case of type $A$, and 
presented a counterexample in the case of type $C_{3}$.
\end{abstract}
%
%
\section{Introduction.}
\label{sec:intro}
%
%
Let $G$ be a connected, simply-connected, 
semisimple algebraic group over $\BC$, and 
$G^{\vee}$ its Langlands dual group.
Mirkovi\'c and Vilonen (\cite{MV1}, \cite{MV2}) 
discovered a family of closed, irreducible, algebraic subvarieties, 
called MV cycles, of the affine Grassmannian 
$\CG$ associated to $G$, 
which provide a basis for each finite-dimensional 
irreducible highest weight representation of $G^{\vee}$ 
(or equivalently, of its Lie algebra $\Fg^{\vee}$).

In order to obtain an explicit combinatorial description of MV cycles, 
Anderson (\cite{A}) defined MV polytopes 
for the Lie algebra $\Fg$ of $G$ to 
be moment map images of these cycles, 
which are drawn in the real form 
$\Fh_{\BR} := \sum_{j \in I} \BR h_{j}$ 
of the Cartan subalgebra $\Fh$ of $\Fg$, 
where the $h_{j}$, $j \in I$, are the simple coroots of $\Fg$; 
in \cite{Kam1}, Kamnitzer characterized 
these MV polytopes as pseudo-Weyl polytopes that satisfy 
``tropical'' Pl\"ucker relations. 
Furthermore, inspired by the crystal structure 
on the set of MV cycles due to Braverman, 
Finkelberg, and Gaitsgory (\cite{BG}, \cite{BFG}), 
Anderson and Mirkovi\'c proposed a conjecture 
(the AM conjecture) describing a crystal structure 
for $\Fg^{\vee}$ on the set of MV polytopes; 
this conjecture gives a method for generating 
MV polytopes inductively without making use of 
the tropical Pl\"ucker relations.
The AM conjecture above was proved in the case 
$\Fg = \Fsl_{n}$ by Kamnitzer (\cite{Kam2}), 
who also presented a counterexample 
in the case $\Fg = \Fsp_{6}$.

The purpose of this paper is 
to prove a kind of modification of 
the original AM conjecture for simple Lie algebras of types $B$ and $C$.
Let us explain our results more precisely. 
In this paper, we assume that $\Fg$ is 
a simple Lie algebra of type $A$ over $\BC$. 
Let $\omega : I \rightarrow I$ be 
a (Dynkin) diagram automorphism of order $2$ of 
the index set $I = \{1,\,2,\,\dots,\,\ell\}$.
Then it induces a Lie algebra automorphism 
(also denoted by) $\omega : \Fg \rightarrow \Fg$, 
which stabilizes the Cartan subalgebra $\Fh$, 
and hence induces $\omega \in \GL(\Fh^{\ast})$ by: 
$\pair{\omega(\lambda)}{h} = \pair{\lambda}{\omega(h)}$ 
for $\lambda \in \Fh^{\ast}$ and $h \in \Fh$.
We set $\Fg^{\omega} := \{x \in \Fg \mid \omega(x) = x\}$ 
and $\Fh^{\omega} := \{h \in \Fh \mid \omega(h) = h\}$.
It is known that if $\Fg$ is of type $A_{\ell}$ with 
$\ell=2n-1$, $n \in \BZ_{\ge 2}$, 
(resp., of type $A_{\ell}$ with $\ell=2n$, $n \in \BZ_{\ge 2}$,)
then $\Fg^{\omega}$ is a simple Lie algebra of type $C_{n}$ 
(resp., type $B_{n}$) with Cartan subalgebra $\Fh^{\omega}$.
Moreover, the Weyl group $\ha{W}$ of $\Fg^{\omega}$ 
can be identified with the subgroup $W^{\omega}$ of 
the Weyl group $W = \langle s_{i} \mid i \in I \rangle$ 
(through a group isomorphism $\Theta : \ha{W} \to W^{\omega}$) 
consisting of the elements of $W$ fixed under the action of 
the diagram automorphism $\omega : I \rightarrow I$ given by: 
$\omega(s_{i}) = s_{\omega(i)}$ for $i \in I$.

Following Kamnitzer, 
let $\mv$ denote the set of MV polytopes 
$P = P(\mu_{\bullet}) \subset \Fh_{\BR}$, with GGMS datum 
$\mu_{\bullet} = (\mu_{w})_{w \in W}$, such that 
$\mu_{w_{0}} = 0 \in \Fh_{\BR}$, 
where $w_{0} \in W$ is the longest element. 
Here the GGMS datum $\mu_{\bullet} = (\mu_{w})_{w \in W}$ 
of an MV polytope $P$ is a collection 
(which may have repetition) of elements of 
$\Fh_{\BZ}:=\sum_{j \in I} \BZ h_{j}$, and gives 
the set of vertices of the convex polytope $P$. 
Let $P = P(\mu_{\bullet}) \in \mv$ be an MV polytope 
with GGMS datum $\mu_{\bullet} = (\mu_{w})_{w \in W}$.
Then, the image $\omega(P)$ of $P$ (as a set) 
under $\omega \in \GL(\Fh)$ is 
identical to the element $P(\mu_{\bullet}') \in \mv$ 
with GGMS datum $\mu_{\bullet}' = (\mu_{w}')_{w \in W}$, 
where $\mu_{w}' := \omega(\mu_{\omega(w)})$ for $w \in W$.
We set $\mv^{\omega} := \bigl\{P \in \mv \mid \omega(P) = P\bigr\}$, 
and define the set $\ha{\mv}$ of MV polytopes 
for $\Fg^{\omega}$ in the same manner as we defined $\mv$ for $\Fg$.
Now, to each element $P = P(\mu_{\bullet})$ of 
$\mv^{\omega}$ with GGMS datum 
$\mu_{\bullet} = (\mu_{w})_{w \in W}$, 
we assign a convex polytope $\Phi(P) = P \cap \Fh^{\omega}$ 
in $\Fh^{\omega} \cap \Fh_{\BR}$, which turns out to be 
the element $\ha{P}(\ha{\mu}_{\bullet})$ of $\ha{\mv}$ 
with GGMS datum 
$\ha{\mu}_{\bullet} = (\ha{\mu}_{\ha{w}})_{\ha{w} \in \ha{W}}$, 
where $\ha{\mu}_{\ha{w}} = \mu_{\Theta(\ha{w})} 
\in \Fh^{\omega} \cap \Fh_{\BZ}$ for $\ha{w} \in \ha{W}$.

One of our main results (Theorem~\ref{thm:fixed-mv})
of this paper asserts that 
the map $\Phi : \mv^{\omega} \to \ha{\mv}$ 
defined above is a bijection. 
This result can be thought of as 
an application of the general idea of realizing crystals 
for a non-simply-laced Kac-Moody algebra 
as the fixed point subsets under a diagram automorphism 
of those for a simply-laced Kac-Moody algebra.
Such an idea has often been used since Lusztig's 
pioneering work (\cite[Chapter~14]{L1}); cf., to 
name a few, \cite{Xu}, \cite{NS1}, \cite{NS2}, \cite{S}, 
and also \cite{KLP}.

Using the result above, 
we prove that for each $1 \le j \le n$, 
the (lowering) Kashiwara operator $\ha{f}_{j}$ on $\ha{\mv}$ 
for the ``LBZ'' crystal structure due to Lusztig and 
Berenstein-Zelevinsky (\cite{BZ2}, \cite{Kam2}) is realized 
(through the bijection $\Phi : \mv^{\omega} \rightarrow \ha{\mv}$) 
as the restriction to $\mv^{\omega} \subset \mv$ 
of a certain composition $f_{j}^{\omega}$ of 
the Kashiwara operators $f_{j}$ and $f_{\omega(j)}$ on $\mv$ 
for the LBZ crystal structure.
Moreover, from the original AM conjecture 
(proved by Kamnitzer) applied to 
MV polytopes in $\mv^{\omega}$, 
we obtain a description 
(Theorems~\ref{thm:pd01}, \ref{thm:pd02}, and \ref{thm:pd03}), 
in terms of GGMS data, of the (lowering) Kashiwara operators 
$\ha{f}_{j}$, $1 \le j \le n$, on MV polytopes in $\ha{\mv}$.
Here we should mention that our description of 
the (lowering) Kashiwara operators on MV polytopes 
for $\Fg^{\omega}$ in types $B$ and $C$ is 
rather analogous to the one in the original AM conjecture, 
and can be thought of as a kind of modification of it.  

This paper is organized as follows. 
In subsections~\ref{subsec:MV} and \ref{subsec:trans}, 
following Kamnitzer, we recall the definition and some basic properties 
of MV polytopes, and also the LBZ crystal structure on them.
Next, in subsections~\ref{subsec:da} and \ref{subsec:damv}, 
we introduce a natural action of the diagram automorphism 
$\omega$ on MV polytopes in type $A$, and then study 
the set of MV polytopes fixed by this action. 
In subsection~\ref{subsec:phi}, we state one of our main results 
(Theorem~\ref{thm:fixed-mv}), the proof of which occupies 
subsections~\ref{subsec:phi-bz} and \ref{subsec:fixed-mv}.
By making use of this result, in section~\ref{sec:hk}, 
we present an explicit description 
(Theorems~\ref{thm:pd01}, \ref{thm:pd02}, and \ref{thm:pd03}) 
of the (lowering) Kashiwara operators on MV polytopes 
in types $B$ and $C$. 

\paragraph{Acknowledgments.}
When we gave a talk on the results of this paper at a conference 
held in June of 2007, we were informed by Toshiyuki Tanisaki that 
Jiuzu Hong (\cite{H1}, \cite{H2}) also 
obtained closely related results by 
an approach different from ours. 
We would like to express our sincere thanks to Toshiyuki Tanisaki 
for his kindness, and to Jiuzu Hong for sending us 
a rough draft (\cite{H1}) of his paper (\cite{H2}). 

%
\section{Mirkovi\'c-Vilonen polytopes and diagram automorphisms.}
\label{sec:MVda}

%
\subsection{Mirkovi\'c-Vilonen polytopes.}
\label{subsec:MV}

Let $\Fg$ be a finite-dimensional semisimple Lie algebra 
(not necessarily of type $A$) over the field $\BC$ 
of complex numbers associated to the root datum 
$\bigl(A=(a_{ij})_{i,j \in I}, \, 
 \Pi=\bigl\{\alpha_{j}\bigr\}_{j \in I}, \, 
 \Pi^{\vee}=\bigl\{h_{j}\bigr\}_{j \in I}, \, 
 \Fh^{\ast},\,\Fh
 \bigr)$, where 
$A=(a_{ij})_{i,j \in I}$ is the Cartan matrix, 
$\Fh$ is the Cartan subalgebra, 
$\Pi=\bigl\{\alpha_{j}\bigr\}_{j \in I} \subset 
 \Fh^{\ast}:=\Hom_{\BC}(\Fh,\,\BC)$ 
is the set of simple roots, and 
$\Pi^{\vee}=\bigl\{h_{j}\bigr\}_{j \in I} \subset \Fh$ 
is the set of simple coroots; note that 
$\pair{\alpha_{j}}{h_{i}}=a_{ij}$ for $i,\,j \in I$, 
where $\pair{\cdot}{\cdot}$ denotes the canonical pairing between 
$\Fh^{\ast}$ and $\Fh$. 
We denote by $x_{j},\,y_{j}$, $j \in I$, 
the Chevalley generators of $\Fg$.
Let $W=\langle s_{i} \mid i \in I \rangle$ 
be the Weyl group of $\Fg$, where $s_{i}$ is the simple reflection 
for $i \in I$, and let $e,\,w_{0} \in W$ denote the unit element and 
the longest element of $W$, respectively. 
Denote by $\Lambda_{i} \in \Fh^{\ast}$, $i \in I$, 
the fundamental weights, and set 
\begin{equation*}
\Gamma:=
\bigl\{w \cdot \Lambda_{i} \mid 
w \in W,\,i \in I\bigr\} \subset \Fh^{\ast}.
\end{equation*}
Let $\Fg^{\vee}$ be 
the (Langlands) dual Lie algebra of $\Fg$, that is, 
the finite-dimensional semisimple Lie algebra over $\BC$ 
associated to the root datum 
$\bigl({}^{t}A=(a_{ji})_{i,j \in I}, \, 
 \Pi^{\vee}=\bigl\{h_{j}\bigr\}_{j \in I}, \, 
 \Pi=\bigl\{\alpha_{j}\bigr\}_{j \in I}, \, 
 \Fh,\,\Fh^{\ast}
 \bigr)$; 
note that the Cartan subalgebra of 
$\Fg^{\vee}$ is not $\Fh$, but $\Fh^{\ast}$. 

We recall from \cite{Kam1} the definitions and 
some basic properties of pseudo-Weyl polytopes 
and Mirkovi\'c-Vilonen (MV for short) polytopes. 
Set $\Fh_{\BZ}:=\bigoplus_{j \in I} \BZ h_{j}$, and 
$\Fh_{\BR}:=\bigoplus_{j \in I} \BR h_{j}$. 
For each $w \in W$, 
we define a partial ordering $\ge_{w}$ on $\Fh_{\BR}$ by: 
$h \ge_{w} h'$ if $w^{-1} \cdot h-w^{-1} \cdot h' 
\in \sum_{j \in I} \BR_{\ge 0} h_{j}$. 
Denote by $\ggms$ the set of 
collections $\mu_{\bullet}=(\mu_{w})_{w \in W}$ of 
elements in $\Fh_{\BR}$ such that 
$\mu_{w'} \ge_{w} \mu_{w}$ for all $w,\,w' \in W$ and 
$\mu_{w_{0}}=0$. 
Note that 
if $\mu_{\bullet}=(\mu_{w})_{w \in W} \in \ggms$, then 
$\mu_{w} \in \sum_{j \in I} \BR_{\le 0} h_{j}$ 
for all $w \in W$, since $\mu_{w} \ge_{w_{0}} \mu_{w_{0}}=0$ 
implies $w_{0}^{-1} \cdot \mu_{w} \in \sum_{j \in I} \BR_{\ge 0} h_{j}$ 
and hence $\mu_{w} \in \sum_{j \in I} \BR_{\ge 0}\,w_{0} \cdot h_{j}$. 
To each $\mu_{\bullet}=(\mu_{w})_{w \in W} \in \ggms$, 
we associate a (convex) polytope $P(\mu_{\bullet}) \subset \Fh_{\BR}$ by:
%
%
\begin{equation} \label{eq:po02}
P(\mu_{\bullet})=
 \bigl\{ 
 h \in \Fh_{\BR} \mid 
 h \ge_{w} \mu_{w} \ 
 \text{for all $w \in W$}
 \bigr\}, 
\end{equation}
and call it the pseudo-Weyl polytope with 
Gelfand-Goresky-MacPherson-Serganova 
(GGMS for short) datum 
$\mu_{\bullet}=(\mu_{w})_{w \in W}$. 
It is easy to see that 
if $\mu_{\bullet}=(\mu_{w})_{w \in W} \in \ggms,\,
\mu_{\bullet}'=(\mu_{w}')_{w \in W} \in \ggms$ and 
$P(\mu_{\bullet})=P(\mu_{\bullet}')$, 
then $\mu_{\bullet}=\mu_{\bullet}'$, i.e., 
$\mu_{w}=\mu_{w}'$ for all $w \in W$. 

Let $\mu_{\bullet}=(\mu_{w})_{w \in W} \in \ggms$. 
For each $\gamma \in \Gamma$, we define $M_{\gamma} \in \BR$ by: 
%
%
\begin{equation} \label{eq:M}
M_{\gamma}=\pair{w \cdot \Lambda_{i}}{\mu_{w}} \in \BR
\quad
\text{if $\gamma=w \cdot \Lambda_{i}$ for some $w \in W$ and 
$i \in I$};
\end{equation}
note that $\pair{w \cdot \Lambda_{i}}{\mu_{w}} \in \BR$ does not 
depend on the expression $\gamma=w \cdot \Lambda_{i}$, 
$w \in W$, $i \in I$, of $\gamma \in \Gamma$. 
Then, we have 
%
%
\begin{equation} \label{eq:mu-M}
\mu_{w}=\sum_{i \in I} 
 M_{w \cdot \Lambda_{i}} \, w \cdot h_{i}
\quad \text{for $w \in W$}.
\end{equation}
It follows immediately from \eqref{eq:mu-M} 
that for $w \in W$ and $i \in I$,
%
%
\begin{equation} \label{eq:length}
\begin{array}{l}
\mu_{ws_{i}}-\mu_{w}=L w \cdot h_{i}, \quad \text{where} \\[3mm]
L=-M_{w \cdot \Lambda_{i}} 
    -M_{ws_{i} \cdot \Lambda_{i}} - 
    {\displaystyle\sum_{j \in I,\, j \ne i}}
    a_{ji} M_{w \cdot \Lambda_{j}},
\end{array}
\end{equation}
which we call the length formula (see \cite[Eq.(8)]{Kam1}). 
By using the length formula, we see easily 
that for each $w \in W$ and $i \in I$, 
the condition $\mu_{ws_{i}} \ge_{w} \mu_{w}$ is equivalent to the edge 
inequality (see \cite[Eq.(6)]{Kam1}): 
%
%
\begin{equation} \label{eq:edge}
 M_{ws_{i} \cdot \Lambda_{i}} + 
 M_{w \cdot \Lambda_{i}} + 
  \sum_{j \in I,\, j \ne i} a_{ji} M_{w \cdot \Lambda_{j}} \le 0. 
\end{equation}

\begin{rem}
Let $w \in W$. It follows by induction on $W$ 
with respect to the (weak) Bruhat ordering that 
$\mu_{ws_{i}} \ge_{w} \mu_{w}$ for all $i \in I$ 
implies $\mu_{w'} \ge_{w} \mu_{w}$ for all $w' \in W$.
\end{rem}

We denote by $\edge$ the set of 
collections $M_{\bullet}=(M_{\gamma})_{\gamma \in \Gamma}$ of 
real numbers, with $M_{w_{0} \cdot \Lambda_{i}} = 0$ for all $i \in I$, 
satisfying the edge inequality \eqref{eq:edge} 
for all $w \in W$ and $i \in I$.
Now, it is clear that by \eqref{eq:M} and \eqref{eq:mu-M}, 
the elements of $\ggms$ and those of $\edge$ are in bijective 
correspondence, which we denote by $D:\ggms \rightarrow \edge$, 
so that if $M_{\bullet}=D(\mu_{\bullet})$, then 
the pseudo-Weyl polytope $P(\mu_{\bullet})$ is 
identical to 
%
%
\begin{equation} \label{eq:po01}
P(M_{\bullet}):=
 \bigl\{ 
 h \in \Fh_{\BR} \mid 
 \pair{\gamma}{h} \ge M_{\gamma} \ 
 \text{for all $\gamma \in \Gamma$}
 \bigr\}; 
\end{equation}
we call $M_{\bullet} \in \edge$ 
the edge datum of the pseudo-Weyl polytope 
$P(\mu_{\bullet})=P(M_{\bullet})$. We set 
$\pwp:=
  \bigl\{P(\mu_{\bullet}) \mid \mu_{\bullet} \in \ggms \bigr\}=
  \bigl\{P(M_{\bullet}) \mid M_{\bullet} \in \edge \bigr\}$.
%
%
\begin{rem} \label{rem:vertex}
We know from \cite[Proposition~2.2]{Kam1} that 
the set of vertices of the pseudo-Weyl polytope 
$P(\mu_{\bullet})$ is the collection 
$\mu_{\bullet}=(\mu_{w})_{w \in W}$ (possibly, with repetitions).
In particular, $P(\mu_{\bullet})$ is identical to the convex hull 
in $\Fh_{\BR}$ of the collection $\mu_{\bullet}=(\mu_{w})_{w \in W}$. 
\end{rem}

Let $w \in W$ and $i,\,j \in I$ be such that 
$ws_{i} > w$, $ws_{j} > w$, and $i \ne j$, 
where $>$ denotes the (weak) Bruhat ordering on $W$. 
We say that an element 
$M_{\bullet}=(M_{\gamma})_{\gamma \in \Gamma} \in \edge$
satisfies the tropical Pl\"ucker relation at $(w,\,i,\,j)$ 
if $a_{ij}=a_{ji}=0$, or one of the following holds: 

\noindent
(1) $a_{ij}=a_{ji}=-1$, and 
%
%
\begin{equation} \label{eq:tp1-1}
M_{ws_{i} \cdot \Lambda_{i}} + 
M_{ws_{j} \cdot \Lambda_{j}}
=
\min\Bigl(
M_{w \cdot \Lambda_{i}}+
M_{ws_{i}s_{j} \cdot \Lambda_{j}}, \ 
M_{ws_{j}s_{i} \cdot \Lambda_{i}}+
M_{w \cdot \Lambda_{j}}
\Bigr);
\end{equation}

\noindent
(2) $a_{ij}=-1$, $a_{ji}=-2$, and 
%
%
\begin{equation} \label{eq:tp2-1}
M_{ws_{j} \cdot \Lambda_{j}}+ 
M_{ws_{i}s_{j} \cdot \Lambda_{j}}+
M_{ws_{i} \cdot \Lambda_{i}} = 
\min \left(
 \begin{array}{l}
 2M_{ws_{i}s_{j} \cdot \Lambda_{j}}+
 M_{w \cdot \Lambda_{i}}, \\[3mm]
 2M_{w \cdot \Lambda_{j}}+
 M_{ws_{i}s_{j}s_{i} \cdot \Lambda_{i}}, \\[3mm]
 M_{w \cdot \Lambda_{j}} + 
 M_{ws_{j}s_{i}s_{j} \cdot \Lambda_{j}}+ 
 M_{ws_{i} \cdot \Lambda_{i}}
 \end{array}
 \right),
\end{equation}
%
%
\begin{equation} \label{eq:tp2-2}
M_{ws_{j}s_{i} \cdot \Lambda_{i}}+ 
2M_{ws_{i}s_{j} \cdot \Lambda_{j}}+
M_{ws_{i} \cdot \Lambda_{i}} = 
\min \left(
 \begin{array}{l}
 2M_{w \cdot \Lambda_{j}}+
 2M_{ws_{i}s_{j}s_{i} \cdot \Lambda_{i}}, \\[3mm]
 2M_{ws_{j}s_{i}s_{j} \cdot \Lambda_{j}}+
 2M_{ws_{i} \cdot \Lambda_{i}}, \\[3mm]
 M_{ws_{i}s_{j}s_{i} \cdot \Lambda_{i}} + 
 2M_{ws_{i}s_{j} \cdot \Lambda_{j}}+ 
 M_{w \cdot \Lambda_{i}}
 \end{array}
 \right);
\end{equation}

\noindent
(3) $a_{ij}=-2$, $a_{ji}=-1$, and 
%
%
\begin{equation} \label{eq:tp3-1}
M_{ws_{j}s_{i} \cdot \Lambda_{i}}+ 
M_{ws_{i} \cdot \Lambda_{i}}+
M_{ws_{i}s_{j} \cdot \Lambda_{j}}=
\min \left(
 \begin{array}{l}
 2M_{ws_{i} \cdot \Lambda_{i}}+
 M_{ws_{j}s_{i}s_{j} \cdot \Lambda_{j}}, \\[3mm]
 2M_{ws_{i}s_{j}s_{i} \cdot \Lambda_{i}}+
 M_{w \cdot \Lambda_{j}}, \\[3mm]
 M_{ws_{i}s_{j}s_{i} \cdot \Lambda_{i}} + 
 M_{w \cdot \Lambda_{i}}+ 
 M_{ws_{i}s_{j} \cdot \Lambda_{j}}
 \end{array}
 \right),
\end{equation}
%
%
\begin{equation} \label{eq:tp3-2}
M_{ws_{j} \cdot \Lambda_{j}}+ 
2M_{ws_{i} \cdot \Lambda_{i}}+
M_{ws_{i}s_{j} \cdot \Lambda_{j}} = 
\min \left(
 \begin{array}{l}
 2M_{ws_{i}s_{j}s_{i} \cdot \Lambda_{i}}+
 2M_{w \cdot \Lambda_{j}}, \\[3mm]
 2M_{w \cdot \Lambda_{i}}+
 2M_{ws_{i}s_{j} \cdot \Lambda_{j}}, \\[3mm]
 M_{w \cdot \Lambda_{j}} + 
 2M_{ws_{i} \cdot \Lambda_{i}}+ 
 M_{ws_{j}s_{i}s_{j} \cdot \Lambda_{j}}
 \end{array}
 \right).
\end{equation}
We omit the tropical Pl\"ucker relations for the case 
$a_{ij}a_{ji}=3$, since we do not use them in this paper. 

We say that an element $M_{\bullet} \in \edge$ satisfies 
the tropical Pl\"ucker relations if it satisfies 
the tropical Pl\"ucker relation at $(w,\,i,\,j)$ for 
each $w \in W$ and $i,\,j \in I$ such that 
$ws_{i} > w$, $ws_{j} > w$, and $i \ne j$.

%
\begin{dfn} \label{dfn:bzmv}
An element $M_{\bullet}=(M_{\gamma})_{\gamma \in \Gamma} \in 
\edge$ is called a Berenstein-Zelevinsky (BZ for short) datum 
if $M_{\gamma} \in \BZ$ for all $\gamma \in \Gamma$, and 
if it satisfies the tropical Pl\"ucker relations.
In this case, the pseudo-Weyl polytope $P(M_{\bullet})$ 
with edge datum $M_{\bullet}$ is called a Mirkovi\'c-Vilonen 
(MV for short) polytope for $\Fg$.
\end{dfn}

Let $\edge_{\MV}$ denote the subset of $\edge$ 
consisting of all BZ data, and $\ggms_{\MV}$ 
the corresponding subset of $\ggms$ 
under the bijection $D:\ggms \rightarrow \edge$. 
We set 
\begin{equation*}
\mv:=
 \bigl\{P(M_{\bullet}) \mid 
        M_{\bullet} \in \edge_{\MV} \bigr\}=
 \bigl\{P(\mu_{\bullet}) \mid 
        \mu_{\bullet} \in \ggms_{\MV}\bigr\} \subset \pwp.
\end{equation*}

\begin{rem}
If $\mu_{\bullet}=(\mu_{w})_{w \in W} \in \ggms$ corresponds to 
$M_{\bullet}=(M_{\gamma})_{\gamma \in \Gamma} \in \edge$ under 
the bijection $D:\ggms \rightarrow \edge$, then, by \eqref{eq:M} 
\begin{equation*}
\text{$\mu_{w} \in \Fh_{\BZ}$ for all $w \in W$}
\quad \Longleftrightarrow \quad
\text{$M_{\gamma} \in \BZ$ for all $\gamma \in \Gamma$}.
\end{equation*}
Hence, if $\mu_{\bullet}=(\mu_{w})_{w \in W} \in \ggms_{\MV}$, 
then $\mu_{w} \in \Fh_{\BZ}$ for all $w \in W$.
\end{rem}

Now, let $\CB$ denote the canonical basis of 
the negative part $U_{q}^{-}(\Fg^{\vee})$ of 
the quantized universal enveloping algebra 
$U_{q}(\Fg^{\vee})$ associated to 
the (Langlands) dual Lie algebra $\Fg^{\vee}$ 
(see \cite[Part~4]{L}).
For each reduced word $\bi=(i_{1},\,i_{2},\,\dots,\,i_{m})$ for 
the longest element $w_{0} \in W$, where $m$ denotes
the length of $w_{0}$, there exists a bijection 
$b_{\bi}:\BZ_{\ge 0}^{m} \rightarrow \CB$, 
which is called a Lusztig parametrization of $\CB$ 
(see \cite[Proposition~8.2]{L2}).
Also, by \cite[Theorem~7.1]{Kam1}, there exists a bijection 
$\psi_{\bi}:\mv \rightarrow \BZ_{\ge 0}^{m}$ given by: 
$\psi_{\bi}(P(\mu_{\bullet}))=(L_{1},\,L_{2},\,\dots,\,L_{m})$, 
where the $L_{k} \in \BZ_{\ge 0}$, $1 \le k \le m$, are determined 
via the length formula (see \eqref{eq:length}): 
$\mu_{w_{k}^{\bi}}-\mu_{w_{k-1}^{\bi}}=
 L_{k}w_{k-1}^{\bi} \cdot h_{i_{k}}$, with 
$w_{k}^{\bi}:=s_{i_{1}}s_{i_{2}} \cdots s_{i_{k}}$, 
for $1 \le k \le m$. 
Furthermore, we know from \cite[Theorem~7.2]{Kam1} 
that there exists a bijection $\Psi':\mv \rightarrow \CB$ 
such that $\Psi'=b_{\bi} \circ \psi_{\bi}$ holds for 
all reduced words $\bi$ for $w_{0}$. 
Thus, we define a bijection $\Psi:\mv \rightarrow \CB(\infty)$ to be 
the composition of the bijection $\Psi':\mv \rightarrow \CB$ with 
the canonical bijection from the canonical basis $\CB$ onto 
the crystal basis $\CB(\infty)$ for the negative part 
$U_{q}^{-}(\Fg^{\vee})$. 

We endow $\mv$ with a crystal structure 
(due to Lusztig and Berenstein-Zelevinsky) for $\Fg^{\vee}$ 
through the bijection $\Psi:\mv \rightarrow \CB(\infty)$ above 
so that $\Psi:\mv \rightarrow \CB(\infty)$ is 
an isomorphism of crystals for $\Fg^{\vee}$.
Let us recall from \cite[\S\S3.5 and 3.6]{Kam2} 
a description of this crystal structure on $\mv$. 
Let $P=P(\mu_{\bullet}) \in \mv$ be 
an MV polytope with GGMS datum 
$\mu_{\bullet}=(\mu_{w})_{w \in W} \in \ggms_{\MV}$.
The weight $\wt(P)$ of $P$ is, by definition, equal to 
the vertex $\mu_{e} \in \sum_{j \in I} \BZ_{\le 0} h_{j}
=\bigl(\sum_{j \in I} \BR_{\le 0} h_{j}\bigr) \cap \Fh_{\BZ}$.
For each $j \in I$, let $f_{j}$ (resp., $e_{j}$) 
denote the lowering (resp., raising) 
Kashiwara operator on $\mv$.
Then, $e_{j}P$ and $f_{j}P$ for each $j \in I$ 
are given as follows (see \cite[Theorem~3.5]{Kam2}). 
If $\mu_{e}=\mu_{s_{j}}$, then 
$e_{j}P=\bzero$, where $\bzero$ is an additional element, 
which is not contained in $\mv$. 
Otherwise, $e_{j}P$ is a unique 
MV polytope $P(\mu_{\bullet}') \in \mv$ 
with GGMS datum $\mu_{\bullet}'=(\mu_{w}')_{w \in W}$ 
such that $\mu_{e}'=\mu_{e}+h_{j}$, and 
$\mu_{w}'=\mu_{w}$ for all $w \in W$ with $s_{j}w < w$. 
Similarly, $f_{j}P$ is a unique MV polytope 
$P(\mu_{\bullet}') \in \mv$ 
with GGMS datum $\mu_{\bullet}'=(\mu_{w}')_{w \in W}$ 
such that $\mu_{e}'=\mu_{e}-h_{j}$, and 
$\mu_{w}'=\mu_{w}$ for all $w \in W$ with $s_{j}w < w$.
Note that since $s_{j}w_{0} < w_{0}$ for all $j \in I$, 
$\mu_{w_{0}}=0$ implies $\mu_{w_{0}}'=0$. 
It is understood that $e_{j}\bzero=f_{j}\bzero=\bzero$. 
In addition, we set
$\ve_{j}(P):=
 \max \bigl\{e_{j}^{k}P \mid e_{j}^{k}P= \bzero \bigr\}$ 
and 
$\vp_{j}(P):=
 \pair{\alpha_{j}}{\wt(P)}+\ve_{j}(P)$. 
%
%
\begin{rem} \label{rem:binf}
Define an element $\mu_{\bullet}^{0}=(\mu_{w})_{w \in W}$ of 
$\ggms$ by: $\mu_{w}=0 \in \Fh$ for all $w \in W$. 
It is obvious that 
$\mu_{\bullet}^{0} \in \ggms$ 
is contained in $\ggms_{\MV}$, 
and the weight of the MV polytope 
$P^{0}:=P(\mu_{\bullet}^{0}) \in \mv$ is equal to 
$0 \in \Fh_{\BZ}$. 
Therefore, under the isomorphism 
$\Psi:\mv \stackrel{\sim}{\rightarrow} \CB(\infty)$ 
of crystals for $\Fg^{\vee}$,
the MV polytope $P^{0} \in \mv$ is sent to 
the element $u_{\infty} \in \CB(\infty)$ 
corresponding to the identity element 
$1 \in U_{q}^{-}(\Fg^{\vee})$. 
\end{rem}

%
\subsection{Transition map between Lusztig parametrizations.}
\label{subsec:trans}

In this subsection, we keep the notation and 
assumptions of \S\ref{subsec:MV}. 
For two reduced words $\bi$ and $\bi'$ for the longest element 
$w_{0} \in W$ of length $m$, we define the transition map 
$R_{\bi}^{\bi'}:\BZ_{\ge 0}^{m} \rightarrow \BZ_{\ge 0}^{m}$ 
between Lusztig parametrizations by: 
$R_{\bi}^{\bi'}=b_{\bi'}^{-1} \circ b_{\bi}$. Note that 
the transition map 
$R_{\bi}^{\bi'}:\BZ_{\ge 0} \rightarrow \BZ_{\ge 0}$ is 
identical to the bijection $\psi_{\bi'} \circ \psi_{\bi}^{-1}:
 \BZ_{\ge 0}^{m} \rightarrow \BZ_{\ge 0}^{m}$ 
since $b_{\bi} \circ \psi_{\bi}=
b_{\bi'} \circ \psi_{\bi'} \, (=\Psi')$. 

In this subsection, we briefly review 
the theory of ``geometric lifting'' of 
the transition map between Lusztig parametrizations of 
the canonical basis, which plays a key role in our proof 
of Proposition~\ref{prop:phi-bz} below. 
Let $G=G(\BC)$ be a connected, simply-connected, 
semisimple algebraic group (or rather, Lie group)
over $\BC$ with Lie algebra $\Fg$. For $j \in I$, 
we denote by $x_{j}(t)$ (resp., $y(t)$), $t \in \BC$, 
the one-parameter subgroup of $G$ given by: 
$x_{j}(t)=\exp(tx_{j})$ (resp., $y_{j}(t)=\exp(ty_{j})$) for 
$t \in \BC$, where $\exp:\Fg \rightarrow G$ 
denotes the exponential map. 
Now, let $N_{\ge 0}$ denote the multiplicative 
semigroup generated by all $x_{j}(t)$ for $j \in I$ and 
$t \ge 0$, and set $N_{> 0}:=N_{\ge 0} \cap (B_{-}w_{0}B_{-})$, 
where $B_{-}$ is the Borel subgroup of $G$ generated by all 
$y_{j}(t)=\exp(ty_{j})$ for $j \in I$ and $t \in \BC$, together with 
the maximal torus $T$ of $G$ with Lie algebra $\Fh$. 
Each reduced word $\bi=(i_{1},\,i_{2},\,\dots,\,i_{m})$ for 
$w_{0}$ gives rise to a bijection 
$x_{\bi}:\BR_{> 0}^{m} \rightarrow N_{> 0}$ by: 
\begin{equation*}
x_{\bi}(t_{1},\,t_{2},\,\dots,\,t_{m})=
x_{i_{1}}(t_{1})x_{i_{2}}(t_{2}) \cdots x_{i_{m}}(t_{m})
\end{equation*}
for $(t_{1},\,t_{2},\,\dots,\,t_{m}) \in \BR_{> 0}^{m}$ 
(see \cite{L1}). The following is one of the main results 
of \cite{BZ2} (for the ``tropicalization'' procedure, we refer 
the reader to \cite[\S2.1]{BFZ}, \cite[\S2.4]{BK}, and 
also \cite[\S1.3]{NY}). 
%
%
\begin{thm}[{\cite[Theorem~5.2]{BZ2}}] \label{thm:trans}
Let $\bi$, $\bi'$ be two reduced words for $w_{0} \in W$. 

\noindent
{\rm (1)} Each component of the transition map
$\CR_{\bi}^{\bi'}(t_{1},\,t_{2},\,\dots,\,t_{m}):=
 x_{\bi'}^{-1} \circ x_{\bi}: \BR_{> 0}^{m} \rightarrow \BR_{> 0}^{m}$
is a subtraction-free rational expression in $t_{1},\,t_{2},\,\dots,\,t_{m}$. 

\noindent
{\rm (2)} Each component of the transition map
$R_{\bi}^{\bi'}=b_{\bi'}^{-1} \circ b_{\bi}: 
 \BZ_{\ge 0}^{m} \rightarrow \BZ_{\ge 0}^{m}$ is 
the tropicalization of the corresponding component of 
$\CR_{\bi}^{\bi'}(t_{1},\,t_{2},\,\dots,\,t_{m})$. 
\end{thm}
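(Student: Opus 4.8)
The plan is to reduce both assertions to the rank-two case, exploiting that the transition maps form a cocycle and that Tits' theorem connects any two reduced words for $w_{0}$ by a chain of elementary braid moves (commutation $2$-moves, and braid $3$-, $4$-, $6$-moves according as $a_{ij}a_{ji}=0,1,2,3$). On the geometric side, the definition gives at once $\CR_{\bi}^{\bi''}=x_{\bi''}^{-1}\circ x_{\bi}=(x_{\bi''}^{-1}\circ x_{\bi'})\circ(x_{\bi'}^{-1}\circ x_{\bi})=\CR_{\bi'}^{\bi''}\circ\CR_{\bi}^{\bi'}$, and likewise $R_{\bi}^{\bi''}=R_{\bi'}^{\bi''}\circ R_{\bi}^{\bi'}$ from $R_{\bi}^{\bi'}=b_{\bi'}^{-1}\circ b_{\bi}$. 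Since a braid move alters only a window of at most six consecutive letters, leaving the other coordinates untouched, it suffices to treat a single braid move inside the rank-two parabolic generated by the two indices involved; the untouched coordinates contribute identity components, harmless both for ``subtraction-free'' and for ``tropicalization''.

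For part (1), inside a rank-two group one solves, for the relevant braid relation, an identity $x_{i}(t_{1})x_{j}(t_{2})\cdots=x_{j}(t_{1}')x_{i}(t_{2}')\cdots$ for the new parameters $t_{k}'$. When $a_{ij}=a_{ji}=0$ this is the trivial swap. In the remaining cases one works in a faithful representation in which the $x_{i}(t)$ act as unipotent matrices with nonnegative polynomial entries in $t$; equating the two products and solving by Gaussian elimination expresses each $t_{k}'$ through additions, multiplications, and divisions by quantities that are manifestly sums of monomials with positive coefficients (for instance, in type $A_{2}$ one obtains $t_{1}'=t_{2}t_{3}/(t_{1}+t_{3})$, $t_{2}'=t_{1}+t_{3}$, $t_{3}'=t_{1}t_{2}/(t_{1}+t_{3})$, and similarly, if more elaborately, in types $B_{2}$ and $G_{2}$). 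Hence each component of $\CR_{\bi}^{\bi'}$ is a subtraction-free rational expression, and since subtraction-free rational maps are closed under composition, the cocycle decomposition yields (1) in general.

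For part (2), one again reduces to a single braid move and must identify the restriction of $R_{\bi}^{\bi'}$ to the corresponding rank-two window. This uses Lusztig's explicit computation of the canonical basis in rank two and of the resulting Lusztig-data mutation $b_{\bi'}^{-1}\circ b_{\bi}$: it is a piecewise-linear map, and one checks, case by case on $a_{ij}a_{ji}\in\{0,1,2,3\}$, that this piecewise-linear map is precisely the tropicalization of the subtraction-free rational map produced in part (1), i.e.\ the result of replacing $\times$ by $+$, $/$ by $-$, and $+$ by $\min$ (for example, in type $A_{2}$, $R_{\bi}^{\bi'}(c_{1},c_{2},c_{3})=\bigl(c_{2}+c_{3}-\min(c_{1},c_{3}),\ \min(c_{1},c_{3}),\ c_{1}+c_{2}-\min(c_{1},c_{3})\bigr)$, the tropicalization of the $A_{2}$ formula above). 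Since tropicalization is a homomorphism from the semifield of subtraction-free rational functions to the tropical $(\min,+)$-semiring, it commutes with composition; combined with the two cocycle decompositions, which run over the same chain of braid moves, the rank-two identifications propagate to arbitrary pairs $\bi,\bi'$, giving (2).

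The main obstacle is the rank-two base case of part (2): one needs the explicit form of the canonical basis, and of the transition maps $b_{\bi'}^{-1}\circ b_{\bi}$, in types $A_{2}$, $B_{2}$, and $G_{2}$, which rests on genuine representation-theoretic input --- the $q$-commutation relations among the PBW root vectors together with the bar-invariance and integrality characterization of the canonical basis, as in Lusztig's work. Once these formulas are in hand, the comparison with the tropicalization of the geometric formulas of part (1) is a finite, essentially mechanical verification; but obtaining the canonical-basis-side formulas --- particularly in the $G_{2}$ case, where the braid relation has length six and the expressions are long --- is where the real content of the theorem lies. Alternatively, one may phrase the argument in the language of cluster algebras: $N_{>0}$ with the coordinates $x_{\bi}$ lies in a double Bruhat cell, the $\CR_{\bi}^{\bi'}$ are compositions of cluster mutations and monomial maps and hence automatically subtraction-free, and $R_{\bi}^{\bi'}$ is their tropicalization by the general principle relating $g$-vectors and $c$-vectors to tropical points; but the rank-two verification is still what anchors the correspondence.
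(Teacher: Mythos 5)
The paper does not prove Theorem~\ref{thm:trans}; it is quoted verbatim from \cite[Theorem~5.2]{BZ2}, so there is no in-paper proof to compare against. Your outline is nonetheless a faithful reconstruction of the standard argument behind that citation: the cocycle property $\CR_{\bi}^{\bi''}=\CR_{\bi'}^{\bi''}\circ\CR_{\bi}^{\bi'}$ and $R_{\bi}^{\bi''}=R_{\bi'}^{\bi''}\circ R_{\bi}^{\bi'}$, Tits' theorem to reduce to a single braid move, the rank-two subtraction-free formulas (which are exactly those of \cite[Theorem~3.1]{BZ1}, recorded in Remark~\ref{rem:trans}), Lusztig's piecewise-linear rank-two transition maps for the Lusztig parametrizations, and the fact that tropicalization commutes with composition of subtraction-free maps. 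You correctly identify the genuine content as the rank-two verification of part~(2), i.e.\ matching Lusztig's explicit canonical-basis computations against the tropicalization of the geometric formulas, which is heaviest in type $G_{2}$ (irrelevant here, since the paper explicitly omits the $a_{ij}a_{ji}=3$ case). One small caution worth making explicit: for the cocycle reduction to a chain of braid moves to be usable for part~(2), you need part~(1) already established for \emph{all} intermediate $\CR_{\bi'}^{\bi''}$ so that ``tropicalization commutes with composition'' applies at each step; your write-up orders things correctly but the dependence should be stated. With that noted, the proposal is sound and mirrors the approach of the cited source.
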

%
%
\begin{rem} \label{rem:trans}
For later use, 
we record explicit formulas for the transition map 
$\CR_{\bi}^{\bi'}: \BR_{> 0}^{d} \rightarrow \BR_{> 0}^{d}$ 
from \cite[Theorem~3.1]{BZ1}, where $\bi$ and $\bi'$ have the form 
$\bi=(i,\,j,\,i,\,\dots)$, 
$\bi'=(j,\,i,\,j,\,\dots)$ of length $d$. We use the notation 
$\CR_{\bi}^{\bi'}
 (t_{1},\,t_{2},\,\dots,\,t_{d})=
 (t_{1}',\,t_{2}',\,\dots,\,t_{d}')$. 
Note that explicit formulas for the transition map 
$R_{\bi}^{\bi'}: \BZ_{\ge 0}^{d} \rightarrow \BZ_{\ge 0}^{d}$ 
are also obtained from these formulas through 
the tropicalization procedure by Theorem~\ref{thm:trans}\,(2). 

\noindent
(0) If $a_{ij}=a_{ji}=0$, then $d=2$ and 
$t_{1}'=t_{2}$, \, $t_{2}'=t_{1}$. 

\noindent
(1) If $a_{ij}=a_{ji}=-1$, then $d=3$ and 
\begin{equation*}
t_{1}'=\frac{t_{2}t_{3}}{\pi}, \qquad
t_{2}'=t_{1}+t_{3}, \qquad
t_{3}'=\frac{t_{1}t_{2}}{\pi},
\end{equation*}
where $\pi=t_{1}+t_{3}$. 

\noindent
(2) If $a_{ij}=-1$, $a_{ji}=-2$, then $d=4$ and 
\begin{equation*}
t_{1}'=\frac{t_{2}t_{3}t_{4}}{\pi_{1}}, \qquad
t_{2}'=\frac{\pi_{1}^{2}}{\pi_{2}}, \qquad
t_{3}'=\frac{\pi_{2}}{\pi_{1}}, \qquad
t_{4}'=\frac{t_{1}t_{2}^{2}t_{3}}{\pi_{2}},
\end{equation*}
where $\pi_{1}=t_{1}t_{2}+(t_{1}+t_{3})t_{4}$, \, 
$\pi_{2}=t_{1}(t_{2}+t_{4})^{2}+t_{3}t_{4}^{2}$. 

\noindent
(3) If $a_{ij}=-2$, $a_{ji}=-1$, then $d=4$ and 
\begin{equation*}
t_{1}'=\frac{t_{2}t_{3}^{2}t_{4}}{\pi_{2}}, \qquad
t_{2}'=\frac{\pi_{2}}{\pi_{1}}, \qquad
t_{3}'=\frac{\pi_{1}^{2}}{\pi_{2}}, \qquad
t_{4}'=\frac{t_{1}t_{2}t_{3}}{\pi_{1}}, 
\end{equation*}
where $\pi_{1}=t_{1}t_{2}+(t_{1}+t_{3})t_{4}$, \, 
$\pi_{2}=t_{1}^{2}t_{2}+(t_{1}+t_{3})^{2}t_{4}$. 
\end{rem}

%
\subsection{Diagram automorphism for $A_{\ell}$.}
\label{subsec:da}

For the remainder of this paper, 
we assume that $\Fg$ is of type $A_{\ell}$, 
$\ell \ge 3$, and $I:=\bigl\{1,\,2,\,\dots,\,\ell\bigr\}$.
Let $\omega:I \rightarrow I$ be the Dynkin diagram automorphism 
of order $2$ given by: $\omega(j)=\ell-j+1$ 
for $j \in I$. Then, the $\omega:I \rightarrow I$ induces a Lie algebra 
automorphism $\omega \in \Aut(\Fg)$ of order $2$ such that 
$\omega(x_{j})=x_{\omega(j)}$, 
$\omega(y_{j})=y_{\omega(j)}$, 
$\omega(h_{j})=h_{\omega(j)}$ for $j \in I$.
Note that the Cartan subalgebra $\Fh$ is stable 
under $\omega \in \Aut(\Fg)$, and hence induces 
$\omega \in \GL(\Fh^{\ast})$ by: 
$\pair{\omega(\lambda)}{h}=\pair{\lambda}{\omega(h)}$ for 
$\lambda \in \Fh^{\ast}$ and $h \in \Fh$. We set 
\begin{equation*}
\Fg^{\omega}:=\bigl\{x \in \Fg \mid \omega(x)=x\bigr\}
\quad \text{and} \quad 
\Fh^{\omega}:=\bigl\{h \in \Fh \mid \omega(h)=h\bigr\}.
\end{equation*}
Furthermore, the $\omega:I \rightarrow I$ 
induces a group automorphism 
$\omega \in \Aut(W)$ of order $2$ such that 
$\omega(s_{i})=s_{\omega(i)}$ for $i \in I$. 
We set $W^{\omega}:=\bigl\{ w \in W \mid \omega(w)=w \bigr\}$.
%
%
\begin{rem} \label{rem:omega}
(1) We see easily from 
the definition of $\omega \in \Aut(W)$ that 
if $w > w'$, then $\omega(w) > \omega(w')$ for $w,\,w' \in W$, 
and $\ell(\omega(w))=\ell(w)$ for $w \in W$, 
where $\ell:W \rightarrow \BZ_{\ge 0}$ denotes 
the length function on $W$. 
In particular, we have $\omega(e)=e$ and $\omega(w_{0})=w_{0}$.

\noindent
(2) It follows immediately 
from the definition of $\omega \in \GL(\Fh^{\ast})$ that 
$\omega(\Lambda_{j})=\Lambda_{\omega(j)}$ and 
$\omega(\alpha_{j})=\alpha_{\omega(j)}$ for $j \in I$. 

\noindent
(3) It is easy to show that 
%
%
\begin{align}
& \omega(w \lambda)=\omega(w)\,\omega(\lambda) 
  \quad \text{for $w \in W$ and $\lambda \in \Fh^{\ast}$}, 
  \label{eq:omg01} \\
& \omega(w h)=\omega(w)\,\omega(h) 
  \quad \text{for $w \in W$ and $h \in \Fh$}. 
  \label{eq:omg02}
\end{align}
In particular, it follows from \eqref{eq:omg02} that 
$\Fh^{\omega} \subset \Fh$ is stable under the action of 
$W^{\omega} \subset W$. 

\noindent
(4) It follows from part (2) and \eqref{eq:omg01} that 
$\omega(w\Lambda_{i})=\omega(w) \Lambda_{\omega(i)}$ for 
$w \in W$ and $i \in I$. Therefore, the set 
$\Gamma$ is stable under the action of 
$\omega \in \GL(\Fh^{\ast})$.

\noindent
(5) We see easily that $h \ge_{w} h'$ 
if and only if 
$\omega(h) \ge_{\omega(w)} \omega(h')$ 
for $w \in W$ and $h,\,h' \in \Fh_{\BR}$.
\end{rem}

In the following, we assume that $\Fg$ is either of type $A_{\ell}$ 
with $\ell=2n-1$, $n \in \BZ_{\ge 2}$, 
or of type $A_{\ell}$ with $\ell=2n$, $n \in \BZ_{\ge 2}$. 
If $\ell=2n-1$, $n \in \BZ_{\ge 2}$, then we know 
(see, for example, \cite[\S8.3]{Kac}) that
the fixed point subalgebra $\Fg^{\omega}$ 
is the finite-dimensional 
simple Lie algebra of type $C_{n}$ (see the figure below); 
the Cartan subalgebra of $\Fg^{\omega}$ is $\Fh^{\omega}$, and 
the Chevalley generators 
$\bigl\{ x_{j}^{\omega},\, y_{j}^{\omega},\, h_{j}^{\omega} \mid 
1 \le j \le n \bigr\}$ of $\Fg^{\omega}$ are as follows:
%
%
\begin{align}
& x_{j}^{\omega}=x_{j}+x_{\omega(j)} \quad
  \text{for $1 \le j \le n-1$}, \qquad 
  x_{n}^{\omega}=x_{n}, \nonumber \\
& y_{j}^{\omega}=y_{j}+y_{\omega(j)} \quad
  \text{for $1 \le j \le n-1$}, \qquad 
  y_{n}^{\omega}=y_{n}, \nonumber \\
& h_{j}^{\omega}=h_{j}+h_{\omega(j)} \quad
  \text{for $1 \le j \le n-1$}, \qquad 
  h_{n}^{\omega}=h_{n}. \label{eq:h-odd}
\end{align}

\svsp

\begin{center}
{\scriptsize
\hspace*{-50mm}
%
%
\unitlength 0.1in
\begin{picture}( 47.6200, 18.2000)(  8.1900,-21.0400)
%
\special{pn 8}%
\special{ar 3866 510 36 36  0.0000000 6.2831853}%
%
\special{pn 8}%
\special{ar 3866 1350 36 36  0.0000000 6.2831853}%
%
\special{pn 8}%
\special{ar 4286 510 36 36  0.0000000 6.2831853}%
%
\special{pn 8}%
\special{ar 4286 1350 36 36  0.0000000 6.2831853}%
%
\special{pn 8}%
\special{ar 5126 510 36 36  0.0000000 6.2831853}%
%
\special{pn 8}%
\special{ar 5126 1350 36 36  0.0000000 6.2831853}%
%
\special{pn 8}%
\special{ar 5546 930 36 36  0.0000000 6.2831853}%
%
\special{pn 8}%
\special{pa 4566 510}%
\special{pa 4846 510}%
\special{dt 0.045}%
%
\special{pn 8}%
\special{pa 4566 1350}%
\special{pa 4846 1350}%
\special{dt 0.045}%
%
\special{pn 8}%
\special{ar 3866 2050 36 36  0.0000000 6.2831853}%
%
\special{pn 8}%
\special{ar 4286 2050 36 36  0.0000000 6.2831853}%
%
\special{pn 8}%
\special{ar 5126 2050 36 36  0.0000000 6.2831853}%
%
\special{pn 8}%
\special{ar 5546 2050 36 36  0.0000000 6.2831853}%
%
\special{pn 8}%
\special{pa 5126 510}%
\special{pa 5546 930}%
\special{fp}%
\special{pa 5546 930}%
\special{pa 5126 1350}%
\special{fp}%
%
\special{pn 8}%
\special{pa 3866 510}%
\special{pa 4566 510}%
\special{fp}%
\special{pa 4846 510}%
\special{pa 5126 510}%
\special{fp}%
\special{pa 5126 1350}%
\special{pa 4846 1350}%
\special{fp}%
\special{pa 4566 1350}%
\special{pa 3866 1350}%
\special{fp}%
%
\special{pn 8}%
\special{pa 4566 2050}%
\special{pa 4846 2050}%
\special{dt 0.045}%
%
\special{pn 8}%
\special{pa 3866 2050}%
\special{pa 4566 2050}%
\special{fp}%
\special{pa 4846 2050}%
\special{pa 5126 2050}%
\special{fp}%
%
\special{pn 8}%
\special{pa 3866 1630}%
\special{pa 3866 1910}%
\special{dt 0.045}%
\special{sh 1}%
\special{pa 3866 1910}%
\special{pa 3886 1842}%
\special{pa 3866 1856}%
\special{pa 3846 1842}%
\special{pa 3866 1910}%
\special{fp}%
\special{pa 4286 1630}%
\special{pa 4286 1910}%
\special{dt 0.045}%
\special{sh 1}%
\special{pa 4286 1910}%
\special{pa 4306 1842}%
\special{pa 4286 1856}%
\special{pa 4266 1842}%
\special{pa 4286 1910}%
\special{fp}%
\special{pa 5126 1630}%
\special{pa 5126 1910}%
\special{dt 0.045}%
\special{sh 1}%
\special{pa 5126 1910}%
\special{pa 5146 1842}%
\special{pa 5126 1856}%
\special{pa 5106 1842}%
\special{pa 5126 1910}%
\special{fp}%
\special{pa 5546 1210}%
\special{pa 5546 1910}%
\special{dt 0.045}%
\special{sh 1}%
\special{pa 5546 1910}%
\special{pa 5566 1842}%
\special{pa 5546 1856}%
\special{pa 5526 1842}%
\special{pa 5546 1910}%
\special{fp}%
\put(38.6600,-3.6900){\makebox(0,0){$1$}}%
\put(42.8600,-3.6900){\makebox(0,0){$2$}}%
\put(51.2600,-3.6900){\makebox(0,0){$n-1$}}%
\put(56.8600,-9.2900){\makebox(0,0){$n$}}%
\put(51.2600,-14.8900){\makebox(0,0){$n+1$}}%
\put(42.8600,-14.8900){\makebox(0,0){$2n-2$}}%
\put(38.6600,-14.8900){\makebox(0,0){$2n-1$}}%
\put(38.6600,-21.8900){\makebox(0,0){$1$}}%
\put(42.8600,-21.8900){\makebox(0,0){$2$}}%
\put(51.2600,-21.8900){\makebox(0,0){$n-1$}}%
\put(55.4600,-21.8900){\makebox(0,0){$n$}}%
%
\special{pn 8}%
\special{pa 3866 930}%
\special{pa 3866 580}%
\special{dt 0.045}%
\special{sh 1}%
\special{pa 3866 580}%
\special{pa 3846 646}%
\special{pa 3866 632}%
\special{pa 3886 646}%
\special{pa 3866 580}%
\special{fp}%
%
\special{pn 8}%
\special{pa 4286 930}%
\special{pa 4286 580}%
\special{dt 0.045}%
\special{sh 1}%
\special{pa 4286 580}%
\special{pa 4266 646}%
\special{pa 4286 632}%
\special{pa 4306 646}%
\special{pa 4286 580}%
\special{fp}%
%
\special{pn 8}%
\special{pa 5126 930}%
\special{pa 5126 580}%
\special{dt 0.045}%
\special{sh 1}%
\special{pa 5126 580}%
\special{pa 5106 646}%
\special{pa 5126 632}%
\special{pa 5146 646}%
\special{pa 5126 580}%
\special{fp}%
%
\special{pn 8}%
\special{pa 5126 930}%
\special{pa 5126 1280}%
\special{dt 0.045}%
\special{sh 1}%
\special{pa 5126 1280}%
\special{pa 5146 1212}%
\special{pa 5126 1226}%
\special{pa 5106 1212}%
\special{pa 5126 1280}%
\special{fp}%
%
\special{pn 8}%
\special{pa 4286 930}%
\special{pa 4286 1280}%
\special{dt 0.045}%
\special{sh 1}%
\special{pa 4286 1280}%
\special{pa 4306 1212}%
\special{pa 4286 1226}%
\special{pa 4266 1212}%
\special{pa 4286 1280}%
\special{fp}%
%
\special{pn 8}%
\special{pa 3866 930}%
\special{pa 3866 1280}%
\special{dt 0.045}%
\special{sh 1}%
\special{pa 3866 1280}%
\special{pa 3886 1212}%
\special{pa 3866 1226}%
\special{pa 3846 1212}%
\special{pa 3866 1280}%
\special{fp}%
\put(28.9000,-9.3000){\makebox(0,0){{\normalsize $\Fg$ (of type $A_{2n-1}$)}}}%
\put(28.8900,-20.5300){\makebox(0,0){{\normalsize $\Fg^{\omega}$ (of type $C_{n}$)}}}%
%
\special{pn 8}%
\special{pa 5518 2032}%
\special{pa 5196 2032}%
\special{fp}%
%
\special{pn 8}%
\special{pa 5518 2060}%
\special{pa 5196 2060}%
\special{fp}%
%
\special{pn 8}%
\special{pa 5160 2046}%
\special{pa 5300 1990}%
\special{fp}%
%
\special{pn 8}%
\special{pa 5160 2046}%
\special{pa 5300 2102}%
\special{fp}%
\end{picture}%
}
\end{center}

If $\ell=2n$, $n \in \BZ_{\ge 2}$, then 
we know (see, for example, \cite[\S8.3]{Kac}) that
the fixed point subalgebra $\Fg^{\omega}$ is 
the finite-dimensional simple Lie algebra of type $B_{n}$ 
(see the figure below); 
the Cartan subalgebra of $\Fg^{\omega}$ is $\Fh^{\omega}$, 
and the Chevalley generators 
$\bigl\{ x_{j}^{\omega},\, y_{j}^{\omega},\, h_{j}^{\omega} \mid 
1 \le j \le n \bigr\}$ of $\Fg^{\omega}$ are as follows: 
%
%
\begin{align}
& x_{j}^{\omega}=x_{j}+x_{\omega(j)} \quad
  \text{for $1 \le j \le n-1$}, \qquad 
  x_{n}^{\omega}=\sqrt{2}(x_{n}+x_{\omega(n)}), \nonumber \\
& y_{j}^{\omega}=y_{j}+y_{\omega(j)} \quad
  \text{for $1 \le j \le n-1$}, \qquad 
  y_{n}^{\omega}=\sqrt{2}(y_{n}+y_{\omega(n)}), \nonumber \\
& h_{j}^{\omega}=h_{j}+h_{\omega(j)} \quad
  \text{for $1 \le j \le n-1$}, \qquad 
  h_{n}^{\omega}=2(h_{n}+h_{\omega(n)}). \label{eq:h-even}
\end{align}

\svsp

\begin{center}
{\scriptsize
\hspace*{-50mm}
%
%
\unitlength 0.1in
\begin{picture}( 49.7000, 18.2200)(  3.3500,-23.6200)
%
\special{pn 8}%
\special{ar 3476 766 36 36  0.0000000 6.2831853}%
%
\special{pn 8}%
\special{ar 3476 1606 36 36  0.0000000 6.2831853}%
%
\special{pn 8}%
\special{ar 3896 766 36 36  0.0000000 6.2831853}%
%
\special{pn 8}%
\special{ar 3896 1606 36 36  0.0000000 6.2831853}%
%
\special{pn 8}%
\special{ar 4736 766 36 36  0.0000000 6.2831853}%
%
\special{pn 8}%
\special{ar 4736 1606 36 36  0.0000000 6.2831853}%
%
\special{pn 8}%
\special{pa 4176 766}%
\special{pa 4456 766}%
\special{dt 0.045}%
%
\special{pn 8}%
\special{pa 4176 1606}%
\special{pa 4456 1606}%
\special{dt 0.045}%
%
\special{pn 8}%
\special{ar 3476 2306 36 36  0.0000000 6.2831853}%
%
\special{pn 8}%
\special{ar 3896 2306 36 36  0.0000000 6.2831853}%
%
\special{pn 8}%
\special{ar 4736 2306 36 36  0.0000000 6.2831853}%
%
\special{pn 8}%
\special{ar 5156 2306 36 36  0.0000000 6.2831853}%
%
\special{pn 8}%
\special{pa 3476 766}%
\special{pa 4176 766}%
\special{fp}%
\special{pa 4456 766}%
\special{pa 4736 766}%
\special{fp}%
\special{pa 4736 1606}%
\special{pa 4456 1606}%
\special{fp}%
\special{pa 4176 1606}%
\special{pa 3476 1606}%
\special{fp}%
%
\special{pn 8}%
\special{pa 4176 2306}%
\special{pa 4456 2306}%
\special{dt 0.045}%
%
\special{pn 8}%
\special{pa 3476 2306}%
\special{pa 4176 2306}%
\special{fp}%
\special{pa 4456 2306}%
\special{pa 4736 2306}%
\special{fp}%
\put(34.7600,-6.2500){\makebox(0,0){$1$}}%
\put(38.9600,-6.2500){\makebox(0,0){$2$}}%
\put(47.3600,-6.2500){\makebox(0,0){$n-1$}}%
\put(47.3600,-17.4500){\makebox(0,0){$n+2$}}%
\put(38.9600,-17.4500){\makebox(0,0){$2n-1$}}%
\put(34.7600,-17.4500){\makebox(0,0){$2n$}}%
\put(34.7600,-24.4500){\makebox(0,0){$1$}}%
\put(38.9600,-24.4500){\makebox(0,0){$2$}}%
\put(47.3600,-24.4500){\makebox(0,0){$n-1$}}%
\put(51.5600,-24.4500){\makebox(0,0){$n$}}%
%
\special{pn 8}%
\special{pa 3476 1186}%
\special{pa 3476 836}%
\special{dt 0.045}%
\special{sh 1}%
\special{pa 3476 836}%
\special{pa 3456 902}%
\special{pa 3476 888}%
\special{pa 3496 902}%
\special{pa 3476 836}%
\special{fp}%
%
\special{pn 8}%
\special{pa 3896 1186}%
\special{pa 3896 836}%
\special{dt 0.045}%
\special{sh 1}%
\special{pa 3896 836}%
\special{pa 3876 902}%
\special{pa 3896 888}%
\special{pa 3916 902}%
\special{pa 3896 836}%
\special{fp}%
%
\special{pn 8}%
\special{pa 4736 1186}%
\special{pa 4736 836}%
\special{dt 0.045}%
\special{sh 1}%
\special{pa 4736 836}%
\special{pa 4716 902}%
\special{pa 4736 888}%
\special{pa 4756 902}%
\special{pa 4736 836}%
\special{fp}%
%
\special{pn 8}%
\special{pa 4736 1186}%
\special{pa 4736 1536}%
\special{dt 0.045}%
\special{sh 1}%
\special{pa 4736 1536}%
\special{pa 4756 1468}%
\special{pa 4736 1482}%
\special{pa 4716 1468}%
\special{pa 4736 1536}%
\special{fp}%
%
\special{pn 8}%
\special{pa 3896 1186}%
\special{pa 3896 1536}%
\special{dt 0.045}%
\special{sh 1}%
\special{pa 3896 1536}%
\special{pa 3916 1468}%
\special{pa 3896 1482}%
\special{pa 3876 1468}%
\special{pa 3896 1536}%
\special{fp}%
%
\special{pn 8}%
\special{pa 3476 1186}%
\special{pa 3476 1536}%
\special{dt 0.045}%
\special{sh 1}%
\special{pa 3476 1536}%
\special{pa 3496 1468}%
\special{pa 3476 1482}%
\special{pa 3456 1468}%
\special{pa 3476 1536}%
\special{fp}%
%
\special{pn 8}%
\special{ar 5160 770 42 42  0.0000000 6.2831853}%
%
\special{pn 8}%
\special{ar 5160 1610 42 42  0.0000000 6.2831853}%
%
\special{pn 8}%
\special{pa 4740 770}%
\special{pa 5160 770}%
\special{fp}%
\special{pa 5160 770}%
\special{pa 5160 1610}%
\special{fp}%
\special{pa 5160 1610}%
\special{pa 4740 1610}%
\special{fp}%
%
\special{pn 8}%
\special{ar 4460 1190 846 846  5.8565578 5.8707422}%
\special{ar 4460 1190 846 846  5.9132954 5.9274798}%
\special{ar 4460 1190 846 846  5.9700330 5.9842174}%
\special{ar 4460 1190 846 846  6.0267706 6.0409550}%
\special{ar 4460 1190 846 846  6.0835082 6.0976926}%
\special{ar 4460 1190 846 846  6.1402458 6.1544302}%
\special{ar 4460 1190 846 846  6.1969833 6.2111677}%
\special{ar 4460 1190 846 846  6.2537209 6.2679053}%
\special{ar 4460 1190 846 846  6.3104585 6.3246429}%
\special{ar 4460 1190 846 846  6.3671961 6.3813805}%
\special{ar 4460 1190 846 846  6.4239337 6.4381181}%
\special{ar 4460 1190 846 846  6.4806713 6.4948557}%
\special{ar 4460 1190 846 846  6.5374089 6.5515933}%
\special{ar 4460 1190 846 846  6.5941465 6.6083309}%
\special{ar 4460 1190 846 846  6.6508841 6.6650685}%
\special{ar 4460 1190 846 846  6.7076216 6.7098128}%
%
\special{pn 8}%
\special{pa 5230 840}%
\special{pa 5208 798}%
\special{fp}%
\special{sh 1}%
\special{pa 5208 798}%
\special{pa 5220 866}%
\special{pa 5232 846}%
\special{pa 5256 848}%
\special{pa 5208 798}%
\special{fp}%
%
\special{pn 8}%
\special{pa 5230 1540}%
\special{pa 5208 1582}%
\special{fp}%
\special{sh 1}%
\special{pa 5208 1582}%
\special{pa 5256 1530}%
\special{pa 5232 1534}%
\special{pa 5220 1512}%
\special{pa 5208 1582}%
\special{fp}%
%
\special{pn 8}%
\special{pa 4770 2320}%
\special{pa 5092 2320}%
\special{fp}%
%
\special{pn 8}%
\special{pa 4770 2292}%
\special{pa 5092 2292}%
\special{fp}%
%
\special{pn 8}%
\special{pa 5128 2306}%
\special{pa 4988 2362}%
\special{fp}%
%
\special{pn 8}%
\special{pa 5128 2306}%
\special{pa 4988 2250}%
\special{fp}%
\put(51.5900,-6.2900){\makebox(0,0){$n$}}%
\put(51.5900,-17.4900){\makebox(0,0){$n+1$}}%
%
\special{pn 8}%
\special{pa 3480 1890}%
\special{pa 3480 2170}%
\special{dt 0.045}%
\special{sh 1}%
\special{pa 3480 2170}%
\special{pa 3500 2102}%
\special{pa 3480 2116}%
\special{pa 3460 2102}%
\special{pa 3480 2170}%
\special{fp}%
%
\special{pn 8}%
\special{pa 3900 1890}%
\special{pa 3900 2170}%
\special{dt 0.045}%
\special{sh 1}%
\special{pa 3900 2170}%
\special{pa 3920 2102}%
\special{pa 3900 2116}%
\special{pa 3880 2102}%
\special{pa 3900 2170}%
\special{fp}%
\special{pa 4740 1890}%
\special{pa 4740 2170}%
\special{dt 0.045}%
\special{sh 1}%
\special{pa 4740 2170}%
\special{pa 4760 2102}%
\special{pa 4740 2116}%
\special{pa 4720 2102}%
\special{pa 4740 2170}%
\special{fp}%
%
\special{pn 8}%
\special{pa 5160 1890}%
\special{pa 5160 2170}%
\special{dt 0.045}%
\special{sh 1}%
\special{pa 5160 2170}%
\special{pa 5180 2102}%
\special{pa 5160 2116}%
\special{pa 5140 2102}%
\special{pa 5160 2170}%
\special{fp}%
\put(24.0500,-12.0000){\makebox(0,0){{\normalsize $\Fg$ (of type $A_{2n}$)}}}%
\put(24.0500,-23.1000){\makebox(0,0){{\normalsize $\Fg^{\omega}$ (of type $B_{n}$)}}}%
\end{picture}%
}
\end{center}

\vsp\vsp

Let $\ha{A}=(\ha{a}_{ij})_{i,j \in \ha{I}}$ denote 
the Cartan matrix of $\Fg^{\omega}$, with 
index set $\ha{I}:=\bigl\{1,\,2,\,\dots,\,n\bigr\}$.
Let $\ha{W}=\langle \ha{s}_{i} \mid i \in \ha{I} \rangle$ be 
the Weyl group of $\Fg^{\omega}$, where 
$\ha{s}_{i}$, $i \in \ha{I}$, are the simple reflections, and 
let $\ha{e},\,\ha{w}_{0} \in \ha{W}$ denote the unit element and 
the longest element of $\ha{W}$, respectively. 
Set 
\begin{equation*}
\ha{\Gamma}:=\bigl\{\ha{w} \cdot \ha{\Lambda}_{i} \mid 
\ha{w} \in \ha{W},\, i \in \ha{I} \bigr\}, 
\end{equation*}
where $\ha{\Lambda}_{i} \in (\Fh^{\omega})^{\ast}$, 
$i \in \ha{I}$, are the fundamental weights for $\Fg^{\omega}$ 
given by: $\ha{\Lambda}_{i}=a_{i} \Lambda_{i}|_{\Fh^{\omega}}$ 
for $i \in \ha{I}$, with 
%
%
\begin{equation} \label{eq:ai}
a_{i}:=
 \begin{cases}
 \dfrac{1}{2} & 
 \text{if $\ell=2n$, $n \in \BZ_{\ge 2}$, and $i=n$}, \\[5mm]
 1 & \text{otherwise}.
 \end{cases}
\end{equation}
We define $\ha{\ggms}$ (resp., $\ha{\edge}$) for $\Fg^{\omega}$ 
in the same manner as we defined $\ggms$ (resp., $\edge$) for $\Fg$, 
and denote by $\ha{\pwp}$ the set of pseudo-Weyl polytopes 
$\ha{P}(\ha{\mu}_{\bullet})
 \subset \Fh^{\omega} \cap \Fh_{\BR}$
with GGMS datum $\ha{\mu}_{\bullet}=
(\ha{\mu}_{\ha{w}})_{\ha{w} \in \ha{W}} \in \ha{\ggms}$. 
Also, we define a bijection 
$\ha{D}:\ha{\ggms} \rightarrow \ha{\edge}$ 
as in \S\ref{subsec:MV}; if $\ha{D}(\ha{\mu})=
\ha{M}_{\bullet}=
(\ha{M}_{\ha{\gamma}})_{\ha{\gamma} \in \ha{\Gamma}} 
\in \ha{\edge}$, then 
%
%
\begin{align}
\ha{P}(\ha{\mu}_{\bullet}) & = 
\bigl\{ 
 h \in \Fh^{\omega} \cap \Fh_{\BR} \mid 
 h \ge_{\ha{w}} \ha{\mu}_{\ha{w}} \ 
 \text{for all $\ha{w} \in \ha{W}$}
\bigr\} \nonumber \\
& = 
\bigl\{ 
 h \in \Fh^{\omega} \cap \Fh_{\BR} \mid 
 \pair{\ha{\gamma}}{h} \ge \ha{M}_{\ha{\gamma}} \ 
 \text{for all $\ha{\gamma} \in \ha{\Gamma}$}
\bigr\}, \label{eq:hapo}
\end{align}
where the partial ordering 
$\ge_{\ha{w}}$ on $\Fh^{\omega} \cap \Fh_{\BR}$ 
for each $\ha{w} \in \ha{W}$ is defined by: 
$h \ge_{\ha{w}} h'$ if $\ha{w}^{-1} \cdot h - \ha{w}^{-1} \cdot h' 
\in \sum_{j \in \ha{I}} \BR_{\ge 0} h_{j}^{\omega}$. 
Now, let $\ha{\edge}_{\MV}$ denote 
the subset of $\ha{\edge}$ consisting of all elements 
(called BZ data for $\Fg^{\omega}$) 
$\ha{M}_{\bullet}=
(\ha{M}_{\ha{\gamma}})_{\ha{\gamma} \in \ha{\Gamma}} 
\in \ha{\edge}$, with 
$\ha{M}_{\ha{\gamma}} \in \BZ$ for $\ha{\gamma} \in \ha{\Gamma}$, 
which satisfy the tropical Pl\"ucker relation at $(\ha{w},\,i,\,j)$ 
for each $\ha{w} \in \ha{W}$ and $i,\,j \in \ha{I}$ such that 
$\ha{w}\ha{s}_{i} > \ha{w}$, $\ha{w}\ha{s}_{j} > \ha{w}$, and 
$i \ne j$, where $>$ denotes the (weak) Bruhat ordering on $\ha{W}$. 
Also, let $\ha{\ggms}_{\MV}$ denote the subset of $\ha{\ggms}$ 
corresponding to $\ha{\edge}_{\MV}$ under the bijection 
$\ha{D}:\ha{\ggms} \rightarrow \ha{\edge}$. Set 
\begin{equation*}
\ha{\mv}:=
 \bigl\{\ha{P}(\ha{M}_{\bullet}) \mid 
 \ha{M}_{\bullet} \in \ha{\edge}_{\MV} \bigr\}=
 \bigl\{\ha{P}(\ha{\mu}_{\bullet}) \mid 
 \ha{\mu}_{\bullet} \in \ha{\ggms}_{\MV} \bigr\},
\end{equation*}
and call an element of $\ha{\mv}$ 
an MV polytope for $\Fg^{\omega}$. 
We endow $\ha{\mv}$ with a crystal structure 
in the same manner as we did for $\mv$, 
so that we have an isomorphism of crystals 
$\ha{\Psi}:\ha{\mv} \stackrel{\sim}{\rightarrow} \ha{\CB}(\infty)$, 
where $\ha{\CB}(\infty)$ denotes the crystal basis for 
the negative part $U_{q}^{-}((\Fg^{\omega})^{\vee})$ of 
the quantized universal enveloping algebra 
$U_{q}((\Fg^{\omega})^{\vee})$ associated to 
the (Langlands) dual Lie algebra 
$(\Fg^{\omega})^{\vee}$ of $\Fg^{\omega}$.
For each $j \in \ha{I}$, 
we denote by $\ha{f}_{j}$ 
(resp., $\ha{e}_{j}$) 
the lowering (resp., raising) 
Kashiwara operator on the crystal $\ha{\mv}$. 
Let $\ha{u}_{\infty} \in \ha{\CB}(\infty)$ denote 
the element of $\ha{\CB}(\infty)$ corresponding to 
the identity element $1 \in U_{q}^{-}((\Fg^{\omega})^{\vee})$, and  
$\ha{P}^{0} \in \ha{\mv}$ the MV polytope 
which is sent to $\ha{u}_{\infty}$ under the isomorphism 
$\ha{\Psi}:\ha{\mv} \stackrel{\sim}{\rightarrow} \ha{\CB}(\infty)$
(see Remark~\ref{rem:binf}). 

It is well-known (for a proof, see, e.g., \cite[Corollary~3.4]{FRS}) 
that there exists a group isomorphism 
$\Theta:\ha{W} \stackrel{\sim}{\rightarrow} W^{\omega}$ such that 
$\Theta(\ha{s}_{i})=s_{i}^{\omega}$ for all $i \in \ha{I}$, 
where
%
%
\begin{equation} \label{eq:sjo}
s_{i}^{\omega}:=
\begin{cases}
s_{i}s_{\omega(i)}=s_{\omega(i)}s_{i} & 
 \text{if $1 \le i \le n-1$}, \\[1.5mm]
s_{n} &
 \text{if $\ell=2n-1$, $n \in \BZ_{\ge 2}$, and $i=n$}, \\[1.5mm]
s_{n}s_{\omega(n)}s_{n}=
s_{\omega(n)}s_{n}s_{\omega(n)} & 
 \text{if $\ell=2n$, $n \in \BZ_{\ge 2}$, and $i=n$}.
\end{cases}
\end{equation}
%
%
\begin{rem} \label{rem:theta}
(1) Recall that $\Fh^{\omega}$ is stable under 
the action of $W^{\omega}$ (see Remark~\ref{rem:omega}\,(3)), and 
that $\Fh^{\omega}$ is the Cartan subalgebra of $\Fg^{\omega}$.
It is easy to check that 
%
%
\begin{equation} \label{eq:theta}
\Theta(\ha{w}) \cdot h = \ha{w} \cdot h \quad 
\text{for all $\ha{w} \in \ha{W}$ and $h \in \Fh^{\omega}$}.
\end{equation}

\noindent
(2) It follows from \eqref{eq:theta} that 
for $h,\,h' \in \Fh^{\omega}$ and $\ha{w} \in \ha{W}$, 
%
%
\begin{equation} \label{eq:order}
\text{$h \ge_{\ha{w}} h'$ \ 
if and only if \ 
$h \ge_{\Theta(\ha{w})} h'$.}
\end{equation}

\noindent
(3) Let $\ha{w} \in \ha{W}$, and set 
$w:=\Theta(\ha{w}) \in W^{\omega}$. 
We deduce from \cite[Lemma~3.2.1]{NS1} that 
for each $j \in \ha{I}$, 
\begin{equation*}
\ha{s}_{j}\ha{w} < \ha{w}
\quad \Longleftrightarrow \quad
s_{j}^{\omega}w < w \nonumber
\quad \Longleftrightarrow \quad 
\text{$s_{j}w < w$ and $s_{\omega(j)}w < w$}.
\end{equation*}
\end{rem}

%
\subsection{Action of the diagram automorphism $\omega$ on $\mv$.}
\label{subsec:damv}

We keep the notation and assumptions of \S\ref{subsec:da}. 
For an element $\mu_{\bullet}=(\mu_{w})_{w \in W} \in \ggms$, 
we define $\omega(\mu_{\bullet})$ to be a collection 
$(\mu_{w}')_{w \in W}$ of elements in $\Fh_{\BR}$ given by: 
$\mu_{w}'=\omega(\mu_{\omega(w)})$ for $w \in W$. 
Then, using Remark~\ref{rem:omega}\,(1) and (5), 
we can easily check that 
$\omega(\mu_{\bullet}) \in \ggms$ 
for all $\mu_{\bullet} \in \ggms$. 
%
%
\begin{rem} \label{rem:daM} 
Let $\mu_{\bullet}=(\mu_{w})_{w \in W} \in \ggms$. 
Set $(M_{\gamma})_{\gamma \in \Gamma}:=D(\mu_{\bullet}) \in \edge$ and 
$(M_{\gamma}')_{\gamma \in \Gamma}:=D(\omega(\mu_{\bullet})) \in \edge$. 
Then we have $M_{\gamma}'=M_{\omega(\gamma)}$ 
for all $\gamma \in \Gamma$. Indeed, 
using Remark~\ref{rem:omega}\,(4), we have 
\begin{align*}
M_{w \cdot \Lambda_{i}}' & = 
 \pair{w \cdot \Lambda_{i}}{\mu_{w}'} = 
 \pair{w \cdot \Lambda_{i}}{\omega^{-1}(\mu_{\omega(w)})} = 
 \pair{\omega(w \cdot \Lambda_{i})}{\mu_{\omega(w)}} \\
& =
 \pair{\omega(w) \cdot \Lambda_{\omega(i)}}{\mu_{\omega(w)}} = 
 M_{\omega(w) \cdot \Lambda_{\omega(i)}} = 
 M_{\omega(w \cdot \Lambda_{i})}
\end{align*}
for each $w \in W$ and $i \in I$. 
\end{rem}

Now we set 
\begin{equation*}
\ggms^{\omega} 
 :=\bigl\{ \mu_{\bullet} \in \ggms \mid 
   \omega(\mu_{\bullet})=\mu_{\bullet}\bigr\}
\quad \text{and} \quad 
\edge^{\omega}:=D(\ggms^{\omega}) \subset \edge.
\end{equation*}
The next lemma follows immediately from 
the definition of the action of $\omega$ on $\ggms$ and 
Remark~\ref{rem:daM}. 
%
%
\begin{lem} \label{lem:bzmvda}
{\rm (1)} Let $\mu_{\bullet}=(\mu_{w})_{w \in W} \in \ggms$. 
Then, $\mu_{\bullet} \in \ggms^{\omega}$ if and only if 
$\omega(\mu_{w})=\mu_{\omega(w)}$ for all $w \in W$. 
In particular, if $\mu_{\bullet}=(\mu_{w})_{w \in W} \in \ggms^{\omega}$, 
then $\mu_{w} \in \Fh^{\omega}$ for all $w \in W^{\omega}$. 

\noindent 
{\rm (2)} 
Let $M_{\bullet}=(M_{\gamma})_{\gamma \in \Gamma} \in \edge$. 
Then, $M_{\bullet} \in \edge^{\omega}$ if and only if 
$M_{\omega(\gamma)}=M_{\gamma}$ for all $\gamma \in \Gamma$. 
\end{lem}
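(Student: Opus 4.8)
The plan is to prove Lemma~\ref{lem:bzmvda} by unwinding the definitions and invoking the correspondence $D:\ggms\to\edge$ together with Remark~\ref{rem:daM}.

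For part (1): by definition, $\mu_{\bullet}\in\ggms^{\omega}$ means $\omega(\mu_{\bullet})=\mu_{\bullet}$, i.e.\ the collection $(\mu_{w}')_{w\in W}$ with $\mu_{w}'=\omega(\mu_{\omega(w)})$ coincides with $(\mu_{w})_{w\in W}$. First I would note this is literally the statement that $\mu_{w}=\omega(\mu_{\omega(w)})$ for all $w\in W$. Applying $\omega$ to both sides and using $\omega^{2}=\mathrm{id}$ (since $\omega$ has order $2$) and replacing $w$ by $\omega(w)$, one checks this is equivalent to $\omega(\mu_{w})=\mu_{\omega(w)}$ for all $w\in W$; the two formulations are interchangeable by the substitution $w\mapsto\omega(w)$, which is a bijection of $W$. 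For the ``in particular'' clause: if $w\in W^{\omega}$, then $\omega(w)=w$, so the relation $\omega(\mu_{w})=\mu_{\omega(w)}$ becomes $\omega(\mu_{w})=\mu_{w}$, i.e.\ $\mu_{w}\in\Fh^{\omega}$ by the definition of $\Fh^{\omega}$.

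For part (2): this follows by transporting the characterization in (1) through the bijection $D:\ggms\to\edge$ using Remark~\ref{rem:daM}. Let $\mu_{\bullet}=D^{-1}(M_{\bullet})$, and recall from Remark~\ref{rem:daM} that $D(\omega(\mu_{\bullet}))=(M_{\omega(\gamma)})_{\gamma\in\Gamma}$. Since $D$ is a bijection, $\omega(\mu_{\bullet})=\mu_{\bullet}$ if and only if $D(\omega(\mu_{\bullet}))=D(\mu_{\bullet})$, i.e.\ $M_{\omega(\gamma)}=M_{\gamma}$ for all $\gamma\in\Gamma$. By the definition $\edge^{\omega}:=D(\ggms^{\omega})$, the condition $M_{\bullet}\in\edge^{\omega}$ is exactly $\mu_{\bullet}\in\ggms^{\omega}$, and the claim follows.

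I do not expect any genuine obstacle here; the lemma is a formal consequence of the definitions, the order-$2$ property of $\omega$, the bijectivity of $D$, and Remark~\ref{rem:daM}. The only point requiring a modicum of care is the substitution $w\mapsto\omega(w)$ in part (1) to pass between the two equivalent forms of the fixed-point condition, and keeping track of the fact that $\omega$ acts on $\Fh_{\BR}$ with $\omega^{-1}=\omega$; both are routine. No nontrivial properties of MV polytopes or tropical Pl\"ucker relations enter, since the statement concerns only $\ggms$ and $\edge$.
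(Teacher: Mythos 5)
Your proof is correct and matches the paper's intent: the paper states that the lemma ``follows immediately from the definition of the action of $\omega$ on $\ggms$ and Remark~\ref{rem:daM}'', and your argument is exactly that unwinding, using $\omega^{2}=\mathrm{id}$ for part~(1) and the bijectivity of $D$ together with Remark~\ref{rem:daM} for part~(2).
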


Let $P=P(\mu_{\bullet})$ be a pseudo-Weyl polytope 
with GGMS datum $\mu_{\bullet}=(\mu_{w})_{w \in W}$. 
Then it follows from \eqref{eq:po02} and 
Remark~\ref{rem:omega}\,(5) that 
the image $\omega(P)=\bigl\{\omega(h) \mid 
h \in P\bigr\}$ of $P$ (as a set) under $\omega \in \GL(\Fh)$ 
is identical to the pseudo-Weyl polytope 
$P(\omega(\mu_{\bullet})) \in \pwp$. For this reason, 
we define an action of $\omega$ on the set 
$\pwp=\bigl\{P(\mu_{\bullet}) \mid \mu_{\bullet} \in \ggms \bigr\}$ 
of pseudo-Weyl polytopes by: 
$\omega(P(\mu_{\bullet}))=P(\omega(\mu_{\bullet}))$
for $\mu_{\bullet} \in \ggms$. Since 
$\omega(P(\mu_{\bullet}))=P(\omega(\mu_{\bullet}))$ 
for $\mu_{\bullet} \in \ggms$, it follows that 
$\omega(P(\mu_{\bullet}))=P(\mu_{\bullet})$ 
if and only if $\omega(\mu_{\bullet})=\mu_{\bullet}$. 
Therefore, we have 
\begin{equation*}
\pwp^{\omega} :=
 \bigl\{P \in \pwp \mid \omega(P)=P\bigr\} = 
 \bigl\{ 
  P(\mu_{\bullet}) \mid 
  \mu_{\bullet} \in \ggms^{\omega}
 \bigr\}
  = 
 \bigl\{ 
  P(M_{\bullet}) \mid 
  M_{\bullet} \in \edge^{\omega}
 \bigr\}.
\end{equation*}

Using Remark~\ref{rem:daM}, along with 
Remark~\ref{rem:omega}\,(1),(4), 
we can check that the subset $\ggms_{\MV}$ of 
$\ggms$ is stable under the action of $\omega$ on $\ggms$, 
which implies that the set $\mv \subset \pwp$ 
of MV polytopes for $\Fg$ is stable under the action of 
$\omega$ on $\pwp$. We set 
\begin{equation*}
\ggms_{\MV}^{\omega}:=
\ggms_{\MV} \cap \ggms^{\omega}
\qquad \text{and} \qquad 
\edge_{\MV}^{\omega}:=
\edge_{\MV} \cap \edge^{\omega}=
D(\ggms_{\MV}^{\omega}), 
\end{equation*}
\begin{equation*}
\mv^{\omega} := \mv \cap \pwp^{\omega}=
 \bigl\{ 
  P(\mu_{\bullet}) \mid 
  \mu_{\bullet} \in \ggms_{\MV}^{\omega}
 \bigr\}
  = 
 \bigl\{ 
  P(M_{\bullet}) \mid 
  M_{\bullet} \in \edge_{\MV}^{\omega}
 \bigr\}.
\end{equation*}

%
\subsection{MV polytopes for $\Fg$ fixed by $\omega$ and 
MV polytopes for $\Fg^{\omega}$.}
\label{subsec:phi}

Recall that $\Fg$ is of type $A_{\ell}$, $\ell \ge 3$. 
Namely, $\Fg$ is either of type $A_{\ell}$ with $\ell=2n-1$, 
$n \in \BZ_{\ge 2}$, or of type $A_{\ell}$ with $\ell=2n$, 
$n \in \BZ_{\ge 2}$. If $\ell=2n-1$, $n \in \BZ_{\ge 2}$ 
(resp., $\ell=2n$, $n \in \BZ_{\ge 2}$), then $\Fg^{\omega}$ 
is of type $C_{n}$ (resp., of type $B_{n}$). 

For $\mu_{\bullet}=(\mu_{w})_{w \in W} \in \ggms^{\omega}$, 
we define $\Phi(\mu_{\bullet})$ to be a collection 
$(\ha{\mu}_{\ha{w}})_{\ha{w} \in \ha{W}}$ of elements 
in $\Fh^{\omega} \cap \Fh_{\BR}$ given by: 
$\ha{\mu}_{\ha{w}}=\mu_{\Theta(\ha{w})}$ 
for $\ha{w} \in \ha{W}$.
Using Remark~\ref{rem:theta}\,(2), along with 
Lemma~\ref{lem:bzmvda}\,(1) and the fact that 
$\Theta(\ha{w}_{0})=w_{0}$, we obtain the following lemma.
%
%
\begin{lem} \label{lem:phi-mu}
We have $\Phi(\mu_{\bullet}) \in \ha{\ggms}$ 
for all $\mu_{\bullet} \in \ggms^{\omega}$. 
\end{lem}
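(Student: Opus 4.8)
The plan is to check directly the three conditions defining membership in $\ha{\ggms}$ for the collection $\Phi(\mu_{\bullet})=(\ha{\mu}_{\ha{w}})_{\ha{w}\in\ha{W}}$, where $\ha{\mu}_{\ha{w}}=\mu_{\Theta(\ha{w})}$: that each $\ha{\mu}_{\ha{w}}$ lies in $\Fh^{\omega}\cap\Fh_{\BR}$, that $\ha{\mu}_{\ha{w}'}\ge_{\ha{w}}\ha{\mu}_{\ha{w}}$ for all $\ha{w},\,\ha{w}'\in\ha{W}$, and that $\ha{\mu}_{\ha{w}_{0}}=0$.

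First I would verify the ambient-space condition. For every $\ha{w}\in\ha{W}$ we have $\Theta(\ha{w})\in W^{\omega}$, so Lemma~\ref{lem:bzmvda}\,(1) gives $\ha{\mu}_{\ha{w}}=\mu_{\Theta(\ha{w})}\in\Fh^{\omega}$; since also $\mu_{\Theta(\ha{w})}\in\Fh_{\BR}$ because $\mu_{\bullet}\in\ggms$, we conclude $\ha{\mu}_{\ha{w}}\in\Fh^{\omega}\cap\Fh_{\BR}$, as required.

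Next, for the GGMS inequalities, fix $\ha{w},\,\ha{w}'\in\ha{W}$ and set $w:=\Theta(\ha{w})$ and $w':=\Theta(\ha{w}')$ in $W^{\omega}$. Since $\mu_{\bullet}\in\ggms$ we have $\mu_{w'}\ge_{w}\mu_{w}$, that is, $\ha{\mu}_{\ha{w}'}\ge_{\Theta(\ha{w})}\ha{\mu}_{\ha{w}}$. Because $\ha{\mu}_{\ha{w}'},\,\ha{\mu}_{\ha{w}}\in\Fh^{\omega}$ by the previous step, equation \eqref{eq:order} of Remark~\ref{rem:theta}\,(2) translates this into $\ha{\mu}_{\ha{w}'}\ge_{\ha{w}}\ha{\mu}_{\ha{w}}$. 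Finally, using $\Theta(\ha{w}_{0})=w_{0}$ together with $\mu_{w_{0}}=0$ we obtain $\ha{\mu}_{\ha{w}_{0}}=\mu_{w_{0}}=0$. Combining the three points yields $\Phi(\mu_{\bullet})\in\ha{\ggms}$.

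The only subtle point — which I would flag as the main (though rather minor) obstacle — is the passage between the partial order $\ge_{\Theta(\ha{w})}$ on $\Fh_{\BR}$ attached to $\Fg$ and the intrinsic partial order $\ge_{\ha{w}}$ on $\Fh^{\omega}\cap\Fh_{\BR}$ attached to $\Fg^{\omega}$; this is precisely what \eqref{eq:theta}--\eqref{eq:order} are designed to handle, and the appeal to them is legitimate here exactly because every $\ha{\mu}_{\ha{w}}$ has first been shown to lie in $\Fh^{\omega}$.
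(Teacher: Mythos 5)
Your proof is correct and takes essentially the same approach as the paper: the paper gives the lemma without a displayed proof, simply noting that it follows from Remark~\ref{rem:theta}\,(2), Lemma~\ref{lem:bzmvda}\,(1), and $\Theta(\ha{w}_{0})=w_{0}$, which are exactly the three ingredients you invoke (and you correctly flag that membership in $\Fh^{\omega}$ must be established before \eqref{eq:order} can be applied).
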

%
%
\begin{rem} \label{rem:phi-M}
Let $\mu_{\bullet}=(\mu_{w})_{w \in W} \in \ggms^{\omega}$, 
and set $(\ha{\mu}_{\ha{w}})_{\ha{w} \in \ha{W}}:=
\Phi(\mu_{\bullet}) \in \ha{\ggms}$. Also, we set 
$(M_{\gamma})_{\gamma \in \Gamma}:=D(\mu_{\bullet}) \in \edge^{\omega}$ and 
$(\ha{M}_{\ha{\gamma}})_{\ha{\gamma} \in \ha{\Gamma}}:=
 \ha{D}(\Phi(\mu_{\bullet})) \in \ha{\edge}$. 
Then, for each $\ha{w} \in \ha{W}$ and $i \in \ha{I}$, 
we have $\ha{M}_{\ha{w} \cdot \ha{\Lambda}_{i}}=
 a_{i} M_{\Theta(w) \cdot \Lambda_{i}}$, where 
$a_{i}$ is as defined in \eqref{eq:ai}. 
Indeed, we have
\begin{align*}
\ha{M}_{\ha{w} \cdot \ha{\Lambda}_{i}}
  & = \pair{\ha{w} \cdot \ha{\Lambda}_{i}}{\ha{\mu}_{\ha{w}}}
    = \pair{\ha{\Lambda}_{i}}{\ha{w}^{-1} \cdot \ha{\mu}_{\ha{w}}}
    = \pair{\ha{\Lambda}_{i}}{\ha{w}^{-1} \cdot \mu_{\Theta(\ha{w})}} \\
  & = \pair{\ha{\Lambda}_{i}}{\Theta(\ha{w}^{-1}) \cdot \mu_{\Theta(\ha{w})}}
    \quad \text{by \eqref{eq:theta}}.
\end{align*}
Therefore, noting that 
$\ha{\Lambda}_{i}=a_{i} \Lambda_{i}|_{\Fh^{\omega}}$, 
we obtain 
\begin{align*}
\ha{M}_{\ha{w} \cdot \ha{\Lambda}_{i}}
 & = \pair{\ha{\Lambda}_{i}}{\Theta(\ha{w}^{-1}) \cdot \mu_{\Theta(\ha{w})}}
   = a_{i}\pair{\Lambda_{i}}{\Theta(\ha{w}^{-1}) \cdot \mu_{\Theta(\ha{w})}} \\
 & = a_{i}\pair{\Theta(\ha{w}) \cdot \Lambda_{i}}{ \mu_{\Theta(\ha{w})}} 
   = a_{i} M_{\Theta(\ha{w}) \cdot \Lambda_{i}}.
\end{align*}
\end{rem}

By Lemma~\ref{lem:phi-mu}, 
we can define a map (also denoted by) 
$\Phi:\pwp^{\omega} \rightarrow \ha{\pwp}$ by: 
$\Phi(P(\mu_{\bullet}))=\ha{P}(\Phi(\mu_{\bullet}))$ 
for $\mu_{\bullet} \in \ggms^{\omega}$. 
If $\mu_{\bullet}=(\mu_{w})_{w \in W} \in \ggms^{\omega}$ and 
$\Phi(\mu_{\bullet})=
 (\ha{\mu}_{\ha{w}})_{\ha{w} \in \ha{W}} \in \ha{\ggms}$, 
then it follows from \eqref{eq:order} that 
%
%
\begin{align}
\Phi(P(\mu_{\bullet}))=\ha{P}(\Phi(\mu_{\bullet}))
& = 
  \bigl\{ 
  h \in \Fh^{\omega} \mid h \ge_{\ha{w}} \ha{\mu}_{\ha{w}} \ 
  \text{for all $\ha{w} \in \ha{W}$}
  \bigr\} \nonumber \\
& = 
  \bigl\{ 
  h \in \Fh^{\omega} \mid 
  h \ge_{\Theta(\ha{w})} \mu_{\Theta(\ha{w})} \ 
  \text{for all $\ha{w} \in \ha{W}$}
  \bigr\} \nonumber \\
& = 
  \bigl\{ 
  h \in \Fh^{\omega} \mid h \ge_{w} \mu_{w} \ 
  \text{for all $w \in W^{\omega}$}
  \bigr\}. \label{eq:phi-po}
\end{align}

\begin{rem}
Let $\mu_{\bullet}=(\mu_{w})_{w \in W} \in \ggms^{\omega}$ and 
$\Phi(\mu_{\bullet})=(\ha{\mu}_{\ha{w}})_{\ha{w} \in \ha{W}} 
\in \ha{\ggms}$. Then we see from \eqref{eq:phi-po} that 
$P(\mu_{\bullet}) \cap \Fh^{\omega} \subset \Phi(P(\mu_{\bullet}))$. 
Also, since $\Phi(P(\mu_{\bullet}))=\ha{P}(\Phi(\mu_{\bullet}))$ is 
the convex hull in $\Fh^{\omega} \cap \Fh_{\BR}$ of 
the collection $\Phi(\mu_{\bullet})=(\ha{\mu}_{\ha{w}})_{\ha{w} \in \ha{W}}$ 
(see Remark~\ref{rem:vertex}) and 
$\ha{\mu}_{\ha{w}}=\mu_{\Theta(\ha{w})} \in 
P(\mu_{\bullet}) \cap \Fh^{\omega}$ 
for all $\ha{w} \in \ha{W}$, it follows that 
$\Phi(P(\mu_{\bullet}))=\ha{P}(\Phi(\mu_{\bullet})) 
\subset P(\mu_{\bullet}) \cap \Fh^{\omega}$. Therefore, 
we conclude that $\Phi(P(\mu_{\bullet}))=
P(\mu_{\bullet}) \cap \Fh^{\omega}$. 
In addition, if $\mu_{w} \in \Fh^{\omega}$ for some $w \in W$, 
then $\mu_{w}$ is a vertex of the convex polytope 
$P(\mu_{\bullet}) \cap \Fh^{\omega}=\Phi(P(\mu_{\bullet}))$, 
so that $\mu_{w}=\ha{\mu}_{\ha{w}}=\mu_{\Theta(\ha{w})}$ 
for some $\ha{w} \in \ha{W}$. 
\end{rem}
%
%
\begin{prop} \label{prop:phi-bz}
We have $\Phi(\mu_{\bullet}) \in \ha{\ggms}_{\MV}$ 
for all $\mu_{\bullet} \in \ggms_{\MV}^{\omega}$. 
\end{prop}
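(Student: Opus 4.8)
The plan is to show that the BZ datum $M_{\bullet}=D(\mu_{\bullet})$ for $\Fg$, which satisfies the tropical Plücker relations for $\Fg$, forces $\ha{M}_{\bullet}=\ha{D}(\Phi(\mu_{\bullet}))$ to satisfy the tropical Plücker relations for $\Fg^{\omega}$. By Remark~\ref{rem:phi-M} we have the explicit relation $\ha{M}_{\ha{w}\cdot\ha{\Lambda}_{i}}=a_{i}M_{\Theta(\ha{w})\cdot\Lambda_{i}}$, so every entry of $\ha{M}_{\bullet}$ is (up to the factor $a_{i}\in\{1,\tfrac12\}$) an entry of $M_{\bullet}$ indexed by an element of $W^{\omega}$-translated $\Gamma$. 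The integrality of $\ha{M}_{\bullet}$ is immediate in type $C$ (all $a_{i}=1$) and in type $B$ requires a small separate check for $i=n$; I expect this to follow from the explicit form of the tropical relations together with parity considerations, or can be absorbed into the geometric-lifting argument below. The weight-type normalization $\ha{M}_{\ha{w}_{0}\cdot\ha{\Lambda}_{i}}=0$ follows from $\Theta(\ha{w}_{0})=w_{0}$ and $M_{w_{0}\cdot\Lambda_{i}}=0$.

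The main work is verifying the tropical Plücker relation at each $(\ha{w},i,j)$ with $\ha{w}\ha{s}_{i}>\ha{w}$, $\ha{w}\ha{s}_{j}>\ha{w}$, $i\ne j$ in $\ha{W}$. The cleanest route is via the geometric lifting / transition-map machinery of Theorem~\ref{thm:trans} and Remark~\ref{rem:trans}, exactly as in \cite{Kam1}: the tropical Plücker relations are equivalent to the statement that the Lusztig data $\psi_{\bi}(\Phi(P(\mu_{\bullet})))$ for different reduced words $\bi$ of $\ha{w}_{0}$ are related by the tropicalized transition maps $R_{\bi}^{\bi'}$. So I would argue: pick a reduced word $\ha{\bi}$ for $\ha{w}_{0}\in\ha{W}$, let $\bi:=\Theta(\ha{\bi})$ be the induced reduced word for $w_{0}=\Theta(\ha{w}_{0})\in W$ obtained by concatenating the words for the $s_{i}^{\omega}$ (as in \eqref{eq:sjo}); since $\mu_{\bullet}\in\ggms_{\MV}^{\omega}$, the Lusztig datum $\psi_{\bi}(P(\mu_{\bullet}))$ is well-defined and, by the length formula \eqref{eq:length} together with $\ha{\mu}_{\ha{w}_{k}^{\ha{\bi}}}=\mu_{\Theta(\ha{w}_{k}^{\ha{\bi}})}$, its entries assemble (block by block, according to the three cases $1\le i\le n-1$, $i=n$ in type $C$, $i=n$ in type $B$) into $\psi_{\ha{\bi}}(\Phi(P(\mu_{\bullet})))$ after grouping and rescaling by the $\ha{a}_{i}$'s. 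The key point is that the braid moves in $\ha{W}$ of the form $(i,j,i,\dots)\leftrightarrow(j,i,j,\dots)$ — which generate all reduced words of $\ha{w}_0$ — correspond under $\Theta$ to compositions of braid moves in $W$ (of the $A_{2}$-type and $A_{3}$-type), and one must check that the tropicalized $\ha{\CR}$-transition formulas of rank-$2$ type $B$/$C$ in Remark~\ref{rem:trans}(2),(3) are recovered by composing the type-$A$ formulas Remark~\ref{rem:trans}(1) along these chains. Granting that, the tropical Plücker relations for $\ha{M}_{\bullet}$ follow because they hold for $M_{\bullet}$.

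Concretely, the steps in order are: (i) fix a reduced word $\ha{\bi}$ for $\ha{w}_{0}$ and construct $\bi=\Theta(\ha{\bi})$, recording the block structure; (ii) use \eqref{eq:theta}, Remark~\ref{rem:theta}(3), and the length formula to identify the Lusztig data, i.e.\ show $\psi_{\ha{\bi}}\circ\Phi$ equals a fixed rescaling-and-grouping of $\psi_{\bi}$ restricted to $\ggms_{\MV}^{\omega}$; (iii) check that for any two reduced words $\ha{\bi},\ha{\bi}'$ for $\ha{w}_0$ differing by a single braid move, the corresponding grouped/rescaled type-$A$ transition map agrees with the rank-$2$ $\ha{R}$-transition map for $\Fg^{\omega}$ from Remark~\ref{rem:trans} — this is a finite check in the rank-$2$ subsystems $A_1\times A_1$, $A_2$, and $A_3$ ($\to C_2=B_2$); (iv) conclude that the collection $(\psi_{\ha{\bi}}(\Phi(P(\mu_{\bullet}))))_{\ha{\bi}}$ is compatible under all $\ha{R}_{\ha{\bi}}^{\ha{\bi}'}$, hence by \cite[Theorem~7.1]{Kam1} (applied to $\Fg^{\omega}$) defines an element of $\ha{\mv}$, which is exactly $\ha{P}(\Phi(\mu_{\bullet}))$; (v) dispose of the integrality of $\ha{M}_{\bullet}$ in type $B$ at $i=n$.

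The main obstacle is step (iii): verifying that the tropicalized $C_2$/$B_2$ transition formulas in Remark~\ref{rem:trans}(2),(3) really are obtained by composing the $A_2$ transition formula of Remark~\ref{rem:trans}(1) along the braid chain realizing $\ha{s}_i\ha{s}_j\ha{s}_i\ha{s}_j = \ha{s}_j\ha{s}_i\ha{s}_j\ha{s}_i$ inside the type-$A$ group (a word of length $8$ in $W$, reshuffled by several type-$A$ braid moves). This is where the asymmetry between the two roots of $\Fg^{\omega}$ of different lengths enters, the factor $a_n=\tfrac12$ in type $B$ is needed, and the combinatorics is genuinely delicate; keeping careful track of which $M_{\gamma}$'s with $\gamma\in\Gamma$ get grouped is essential, and it is precisely here that one sees why the naive Anderson–Mirković recipe fails in type $C_3$ while the present modified statement holds.
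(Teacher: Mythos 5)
Your plan is essentially the paper's plan: relate the Lusztig datum of $\Phi(P(\mu_{\bullet}))$ to that of $P(\mu_{\bullet})$ via the block structure coming from the concatenated reduced word $\bi$, and then use the geometric-lifting/transition-map machinery to transport the tropical Pl\"ucker relations. Your steps (i), (ii), (iv) track the paper's Propositions~\ref{prop:L}, \ref{prop:L2}, Corollary~\ref{cor:L}, and the subsequent reduction. But two points you flag as vague or ``delicate'' are exactly where the paper does real work, and your proposal does not actually supply the arguments.

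First, step (ii) (that $\psi_{\ha{\bi}}\circ\Phi$ is a ``grouping and rescaling'' of $\psi_{\bi}$) requires knowing beforehand that the entries of $\psi_{\bi}(P(\mu_{\bullet}))$ are \emph{constant on each block}. This is Proposition~\ref{prop:L}, and it is not automatic from $\mu_{\bullet}\in\ggms_{\MV}^{\omega}$: one must use $\omega(P)=P$, i.e.\ $\psi_{\omega(\bi)}(P)=\psi_{\bi}(P)$ (Lemma~\ref{lem:psi} and \eqref{eq:psi}), together with a short transition-map calculation from $\bi$ to an adjacent reduced word to force, e.g., $L_{1}=L_{2}=L_{3}$ within an $A_{2}$-block. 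Once block-constancy is in hand, integrality at $i=n$ in type $B$ is not a separate parity check but a direct consequence of Corollary~\ref{cor:L}: $\ha{\mu}_{\ha{w}_{k}^{\bj}}-\ha{\mu}_{\ha{w}_{k-1}^{\bj}}=L^{(k)}_{l}\,\ha{w}_{k-1}^{\bj}\cdot h_{j_{k}}^{\omega}$ with $L^{(k)}_{l}\in\BZ_{\ge 0}$, and $h_{n}^{\omega}=2(h_{n}+h_{\omega(n)})$ already carries the factor $2$ absorbing $\ha{a}_{n}=\tfrac12$.

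Second, and more importantly, your step (iii) — that the grouped, rescaled type-$A$ transition map agrees with the rank-$2$ $B_{2}/C_{2}$ transition map — is stated as an obstacle, not resolved. You propose to verify it by explicitly composing $A_{2}$ (and $A_{3}$) braid moves along the chain realizing $\ha{s}_i\ha{s}_j\ha{s}_i\ha{s}_j=\ha{s}_j\ha{s}_i\ha{s}_j\ha{s}_i$ inside $W$ and matching the result against Remark~\ref{rem:trans}(2),(3). The paper avoids this computation. Its Lemma~\ref{lem:R} shows directly that $H_{2}\circ\CR_{\bi}^{\bi'}\circ H_{1}=\ha{\CR}_{\bj}^{\bj'}$ where $H_{1},H_{2}$ encode the duplication-with-$\sqrt{2}$-scaling, and the proof is a one-line appeal to the group-level identity of \cite[Proposition~7.4(b)]{BZ1}:
\begin{equation*}
x_{\bj}^{\omega}(u_{1},u_{2},u_{3},u_{4})=
x_{\bi}\left(u_{1},u_{1},\,\tfrac{u_{2}}{\sqrt{2}},\sqrt{2}u_{2},\tfrac{u_{2}}{\sqrt{2}},\,u_{3},u_{3},\,\tfrac{u_{4}}{\sqrt{2}},\sqrt{2}u_{4},\tfrac{u_{4}}{\sqrt{2}}\right),
\end{equation*}
and similarly for $\bj'$. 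Combined with functoriality of tropicalization (the maps $H_{1},H_{2}$ are monomial/subtraction-free, so they tropicalize), this gives exactly the compatibility you wanted in (iii) without ever composing individual $A_{2}$ moves. So the missing ingredient in your proposal is the appeal to the totally positive parametrizations for the fixed subgroup (BZ1, Prop.~7.4), which is the precise mechanism that makes the rescaling constants concrete (the $\sqrt{2}$'s) and reduces the ``delicate combinatorics'' to a clean identity in $N_{>0}\cap G^{\omega}$.
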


The proof of this proposition will be given in 
\S\ref{subsec:phi-bz}. It follows from this proposition that 
$\Phi(\mv^{\omega}) \subset \ha{\mv}$. 
Hence the restriction of the map 
$\Phi:\pwp^{\omega} \rightarrow \ha{\pwp}$ to 
$\mv^{\omega}$ gives rise to a map 
$\Phi:\mv^{\omega} \rightarrow \ha{\mv}$.

Now we define operators $f_{j}^{\omega}$, $j \in \ha{I}$, 
on $\mv$ by:
%
%
\begin{equation} \label{eq:ok}
f_{j}^{\omega}=
\begin{cases}
f_{j}f_{\omega(j)} & 
 \text{if $1 \le j \le n-1$}, \\[3mm]
f_{n} &
 \text{if $\ell=2n-1$, $n \in \BZ_{\ge 2}$, and $j=n$}, \\[3mm]
f_{n}f_{\omega(n)}^{2}f_{n} & 
 \text{if $\ell=2n$, $n \in \BZ_{\ge 2}$, and $j=n$}.
\end{cases}
\end{equation}
%
%
\begin{rem} \label{rem:ok}
Since $\mv$ is isomorphic to $\CB(\infty)$ as a crystal for $\Fg^{\vee}$, 
we deduce from \cite[Proposition~7.4.1]{Kas} that 
$f_{j}f_{\omega(j)}=f_{\omega(j)}f_{j}$ if $1 \le j \le n-1$, 
and that 
$f_{n}f_{\omega(n)}^{2}f_{n}=f_{\omega(n)}f_{n}^{2}f_{\omega(n)}$ 
if $\ell=2n$, $n \in \BZ_{\ge 2}$. 
\end{rem}
%
%
\begin{thm} \label{thm:fixed-mv}
{\rm (1)} 
The subset $\mv^{\omega}$ of $\mv$ is stable under the operators 
$f_{j}^{\omega}$ for all $j \in \ha{I}$. 

\noindent {\rm (2)}
Each element $P \in \mv^{\omega}$ is of the form 
$P=f_{j_{1}}^{\omega}f_{j_{2}}^{\omega} \cdots f_{j_{k}}^{\omega}P^{0}$ 
for some  $j_{1},\,j_{2},\,\dots,\,j_{k} \in \ha{I}$. 

\noindent {\rm (3)}
The map 
$\Phi:\mv^{\omega} \rightarrow \ha{\mv}$ is a unique 
bijection such that 
$\Phi(P^{0})=\ha{P}^{0}$, and such that 
$\Phi \circ f_{j}^{\omega}= \ha{f}_{j} \circ \Phi$ 
for all $j \in \ha{I}$. 
\end{thm}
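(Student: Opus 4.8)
The plan is to establish the three assertions in the order $(1)\Rightarrow[\text{the intertwining relation}]\Rightarrow(2),(3)$, the decisive point being a single identity, $\Phi\circ f_{j}^{\omega}=\ha{f}_{j}\circ\Phi$ on $\mv^{\omega}$ for $j\in\ha{I}$. Everything will be read off from the explicit descriptions of the Kashiwara operators $f_{j}$ and $\ha{f}_{j}$ in terms of GGMS data (from \S\ref{subsec:MV}), together with Proposition~\ref{prop:phi-bz}, Remark~\ref{rem:theta}, and the commutation relations of Remark~\ref{rem:ok}.

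For $(1)$ I would first record the $\omega$-equivariance $\omega\circ f_{j}=f_{\omega(j)}\circ\omega$ on $\mv$. Indeed, if $P(\mu_{\bullet}')=f_{j}P(\mu_{\bullet})$ then $\mu_{e}'=\mu_{e}-h_{j}$ and $\mu_{w}'=\mu_{w}$ whenever $s_{j}w<w$; applying $\omega$ and using that $\omega$ preserves the Bruhat order and satisfies $\omega(h_{j})=h_{\omega(j)}$ (Remark~\ref{rem:omega}), the collection $\omega(\mu_{\bullet}')$ has $e$-vertex $\omega(\mu_{e})-h_{\omega(j)}$ and agrees with $\omega(\mu_{\bullet})$ on all $w$ with $s_{\omega(j)}w<w$, so by the uniqueness in the description of $f_{\omega(j)}$ we get $\omega(f_{j}P)=f_{\omega(j)}(\omega(P))$. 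Granting this, $(1)$ follows at once: for $P\in\mv^{\omega}$ and $1\le j\le n-1$, $\omega(f_{j}^{\omega}P)=\omega(f_{j}f_{\omega(j)}P)=f_{\omega(j)}f_{j}\,\omega(P)=f_{\omega(j)}f_{j}P=f_{j}f_{\omega(j)}P=f_{j}^{\omega}P$, using $\omega(P)=P$ and $f_{j}f_{\omega(j)}=f_{\omega(j)}f_{j}$ from Remark~\ref{rem:ok}; the cases $j=n$ are handled identically, using $f_{n}^{\omega}=f_{n}$ with $\omega(n)=n$ in type $C$, and $f_{n}f_{\omega(n)}^{2}f_{n}=f_{\omega(n)}f_{n}^{2}f_{\omega(n)}$ in type $B$. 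The same computation shows that $\mv^{\omega}$ is stable under the operators $e_{j}^{\omega}$ --- defined by \eqref{eq:ok} with each $f$ replaced by $e$ --- wherever $e_{j}^{\omega}P\ne\bzero$; moreover $e_{j}^{\omega}f_{j}^{\omega}=\mathrm{id}$ on $\mv$ and $f_{j}^{\omega}e_{j}^{\omega}=\mathrm{id}$ where $e_{j}^{\omega}P\ne\bzero$, directly from the single-node identities $e_{i}f_{i}=\mathrm{id}$, $f_{i}e_{i}=\mathrm{id}$ on $\CB(\infty)$, and the $e_{k}$ satisfy the same $\omega$-equivariance as the $f_{k}$ (hence so do the $\ve_{k}$).

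Next, the intertwining relation. That $\Phi(P^{0})=\ha{P}^{0}$ is clear, since $\Phi$ sends the zero GGMS datum to the zero GGMS datum. Fix $P=P(\mu_{\bullet})\in\mv^{\omega}$ and $j\in\ha{I}$, and iterate the rule for a single $f_{i}$ along the letters of a reduced expression of $s_{j}^{\omega}$ (see \eqref{eq:sjo}): the $e$-vertex of $f_{j}^{\omega}P$ equals $\mu_{e}-h_{j}^{\omega}$ --- this is exactly the comparison of \eqref{eq:ok} with \eqref{eq:h-odd} and \eqref{eq:h-even} (e.g.\ $f_{j}f_{\omega(j)}$ subtracts $h_{j}+h_{\omega(j)}=h_{j}^{\omega}$, while $f_{n}f_{\omega(n)}^{2}f_{n}$ subtracts $2h_{n}+2h_{\omega(n)}=h_{n}^{\omega}$) --- and $(f_{j}^{\omega}P)_{w}=\mu_{w}$ for every $w$ with $s_{i}w<w$ for each letter $i$ of that reduced expression, since each application of $f_{i}$ leaves $\mu_{w}$ fixed for $s_{i}w<w$. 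By Remark~\ref{rem:theta}, for $w=\Theta(\ha{w})$ the latter condition is precisely $\ha{s}_{j}\ha{w}<\ha{w}$. Since $f_{j}^{\omega}P\in\mv^{\omega}$ by $(1)$, Proposition~\ref{prop:phi-bz} gives $\Phi(f_{j}^{\omega}P)\in\ha{\mv}$, and its GGMS datum $(\ha{\mu}_{\ha{w}}')$ satisfies $\ha{\mu}_{\ha{e}}'=(f_{j}^{\omega}P)_{e}=\mu_{e}-h_{j}^{\omega}=\Phi(P)_{\ha{e}}-h_{j}^{\omega}$ and $\ha{\mu}_{\ha{w}}'=(f_{j}^{\omega}P)_{\Theta(\ha{w})}=\mu_{\Theta(\ha{w})}=\Phi(P)_{\ha{w}}$ whenever $\ha{s}_{j}\ha{w}<\ha{w}$. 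As $\ha{f}_{j}\Phi(P)$ is characterized by exactly these two properties, $\Phi(f_{j}^{\omega}P)=\ha{f}_{j}\Phi(P)$.

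To finish, for $(2)$ I would induct on the height of $-\wt(P)\in\sum_{j}\BZ_{\ge0}h_{j}$: if $P\in\mv^{\omega}$ has $\wt(P)\ne0$ then $\Psi(P)\ne u_{\infty}$, so $\ve_{i}(P)>0$ for some $i\in I$, hence $\ve_{\omega(i)}(P)>0$ by $\omega$-equivariance; taking $j\in\ha{I}$ to be whichever of $i,\omega(i)$ lies in $\ha{I}$, one is reduced to checking $e_{j}^{\omega}P\ne\bzero$, and then $P=f_{j}^{\omega}(e_{j}^{\omega}P)$ with $e_{j}^{\omega}P\in\mv^{\omega}$ of smaller height. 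Surjectivity of $\Phi$ is then immediate: $\Phi(\mv^{\omega})$ contains $\ha{P}^{0}$ and is stable under every $\ha{f}_{j}$ (by the intertwining relation and $(1)$), and $\ha{\mv}\cong\ha{\CB}(\infty)$ is generated from $\ha{P}^{0}$ by the $\ha{f}_{j}$. For injectivity I would use a downward induction on height: if $\Phi(P)=\Phi(P')$ with $P\ne P^{0}$, pick $j$ with $\ha{e}_{j}\Phi(P)\ne\bzero$; using $\ha{e}_{j}\ha{f}_{j}=\mathrm{id}$ and the cancellation identities established above, the intertwining relation upgrades to $\Phi\circ e_{j}^{\omega}=\ha{e}_{j}\circ\Phi$ (where $e_{j}^{\omega}\ne\bzero$), so $\Phi(e_{j}^{\omega}P)=\ha{e}_{j}\Phi(P)=\Phi(e_{j}^{\omega}P')$ with both sides of smaller height; the inductive hypothesis gives $e_{j}^{\omega}P=e_{j}^{\omega}P'$, and applying $f_{j}^{\omega}$ yields $P=P'$. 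With $\Phi$ a bijection intertwining $f_{j}^{\omega}$ with $\ha{f}_{j}$, pulling back a presentation $\ha{P}=\ha{f}_{j_{1}}\cdots\ha{f}_{j_{k}}\ha{P}^{0}$ gives $\Phi^{-1}(\ha{P})=f_{j_{1}}^{\omega}\cdots f_{j_{k}}^{\omega}P^{0}$, and uniqueness follows since any bijection with the two stated properties agrees with $\Phi$ on this generating family. The step I expect to be the main obstacle is showing $e_{j}^{\omega}P\ne\bzero$ for $P\ne P^{0}$ (and, in the injectivity step, $\ha{e}_{j}\Phi(P)\ne\bzero\Rightarrow e_{j}^{\omega}P\ne\bzero$) --- equivalently, matching $\ha{\ve}_{j}(\Phi(P))$ with the appropriate composite of $\ve_{j}(P)$ and $\ve_{\omega(j)}(P)$; this rests on the formula $\ve_{j}(P)=L$ with $\mu_{s_{j}}-\mu_{e}=Lh_{j}$ (so $\ve_{j}$ is linear in the edge datum near $e$), on Remark~\ref{rem:phi-M}, and on the local $\Fsl_{2}\times\Fsl_{2}$-structure of $\CB(\infty)$ at $\{j,\omega(j)\}$, the case $j=n$ in type $B$ amounting to an $\Fsl_{3}$-crystal computation. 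Alternatively, one may sidestep this by invoking the general theory of $\omega$-fixed subcrystals of $\CB(\infty)$ (cf.\ \cite{L1}, \cite{NS1}) to obtain $(2)$ and an abstract crystal isomorphism, and then use Proposition~\ref{prop:phi-bz} and the intertwining relation to identify it with $\Phi$.
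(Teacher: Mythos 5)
Your proposal takes a genuinely different route for the bijectivity of $\Phi$, and it is largely sound, but the injectivity step has the gap you yourself flag; let me make precise where it sits relative to the paper.

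Your treatment of $\omega\circ f_j=f_{\omega(j)}\circ\omega$ differs from Lemma~\ref{lem:fo} in the paper: you read the equivariance directly off the GGMS characterization of $f_j$ from \cite[Theorem~3.5]{Kam2} together with Remark~\ref{rem:omega}, whereas the paper proves it by passing through Lusztig parametrizations ($\psi_{\omega(\bi)}(\omega(P))=\psi_{\bi}(P)$, Lemma~\ref{lem:psi}). Your version is perfectly valid and arguably more elementary; both give the same Lemma~\ref{lem:fo}. Your proof of part (1) and of the intertwining relation $\Phi\circ f_j^{\omega}=\ha{f}_j\circ\Phi$ then coincides, essentially, with Proposition~\ref{prop:stable} and Proposition~\ref{prop:com}: you track the $e$-vertex through \eqref{eq:ok}, \eqref{eq:h-odd}, \eqref{eq:h-even}, use Remark~\ref{rem:theta}\,(3) to match the fixed-locus conditions, and invoke Proposition~\ref{prop:phi-bz} plus the uniqueness clause of Kamnitzer's description of $\ha{f}_j$. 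This is correct.

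The divergence is in how you establish bijectivity. The paper proves Proposition~\ref{prop:bij} \emph{first}, independently of any crystal operator, by pure Lusztig-parametrization combinatorics: Propositions~\ref{prop:L} and \ref{prop:L2} and Corollary~\ref{cor:L} show that $\psi_{\bi}(P)$ for a canonical reduced word $\bi$ determines and is determined by the data $\ha{L}_k$ of $\Phi(P)$, whence both injectivity and surjectivity of $\Phi$ drop out at once, and the rest of the theorem then follows formally. You instead propose crystal-theoretic induction after the intertwining relation. Your surjectivity argument works cleanly: $\Phi(\mv^{\omega})\ni\ha{P}^0$ and is closed under $\ha{f}_j$, and $\ha{\mv}\cong\ha{\CB}(\infty)$ is generated by the $\ha{f}_j$ from $\ha{P}^0$. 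But the injectivity step has a real gap: you need $\ha{e}_j\Phi(P)\ne\bzero$ to force $e_j^{\omega}P\ne\bzero$, and without injectivity already in hand one cannot get this by pulling back a $\ha{Q}$ with $\ha{f}_j\ha{Q}=\Phi(P)$ (that argument is circular). So you do need the $\ve$-compatibility you point to --- for $1\le j\le n-1$ it reduces to the rank-two $\Fsl_2\times\Fsl_2$ case plus a symmetry statement like the first equality in Proposition~\ref{prop:L}, but for $j=n$ in type $B$ ($\ell=2n$, $\omega(n)=n+1$ adjacent) it genuinely requires an $\Fsl_3$-crystal computation in $\CB(\infty)$ that the paper never has to do, precisely because Proposition~\ref{prop:bij} dispatches bijectivity beforehand. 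Your fallback --- importing the abstract $\omega$-fixed-subcrystal isomorphism of \cite{NS2} --- would close the gap, and indeed the paper's own remark after Theorem~\ref{thm:fixed-mv} notes exactly this, but then one is using a different (and less self-contained) input than the paper's Lusztig-parametrization argument. In short: parts (1) and the intertwining relation and surjectivity are fine; injectivity as written is incomplete and, if done your way, is noticeably harder than the route the paper takes.
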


The proof of this theorem will be given in \S\ref{subsec:fixed-mv}. 

\begin{rem}
The existence of a bijection $\mv^{\omega} \rightarrow \ha{\mv}$ 
satisfying the conditions of part~(3) of 
Theorem~\ref{thm:fixed-mv}, follows immediately from 
\cite[Theorem~3.4.1]{NS2} (see also \cite[Theorem~14.4.9]{L} 
for the case in which $\Fg^{\vee}$ is not of type $A_{2n}$); 
note that the orbit Lie algebra associated to $\Fg^{\vee}$ is 
precisely the dual Lie algebra $(\Fg^{\omega})^{\vee}$ 
of $\Fg^{\omega}$. However, for our purpose, 
we need a more explicit description of the bijection 
in terms of polytopes, such as the one given in this subsection.
\end{rem}

%
\subsection{Proof of Proposition~\ref{prop:phi-bz}.}
\label{subsec:phi-bz}

This subsection is devoted to the proof of 
Proposition~\ref{prop:phi-bz}. 
We keep the notation and assumptions of \S\ref{subsec:phi}. 
We know from Lemma~\ref{lem:phi-mu} that 
if $\mu_{\bullet}=(\mu_{w})_{w \in W} \in \ggms^{\omega}$, 
then $\ha{\mu}_{\bullet}:=
\Phi(\mu_{\bullet})=(\ha{\mu}_{\ha{w}})_{\ha{w} \in \ha{W}}$ 
is an element of $\ha{\ggms}$.
In this subsection, by setting $\ha{M}_{\bullet}:=
\ha{D}(\ha{\mu}_{\bullet})=
(\ha{M}_{\ha{\gamma}})_{\ha{\gamma} \in \ha{\Gamma}}  \in \ha{\edge}$, we first prove 
that $\ha{M}_{\ha{\gamma}} \in \BZ$ 
for all $\ha{\gamma} \in \ha{\Gamma}$, and then prove that 
$\ha{M}_{\bullet}$ satisfies the tropical Pl\"ucker relations. 

We begin with the following simple lemma. 
%
%
\begin{lem} \label{lem:psi}
Let $P=P(\mu_{\bullet}) \in \mv$ be an MV polytope 
with GGMS datum $\mu_{\bullet}=(\mu_{w})_{w \in W} \in \ggms_{\MV}$, and 
$\bi=(i_{1},\,i_{2},\,\dots,\,i_{m})$ a reduced word 
for $w_{0} \in W$. Then we have $\psi_{\omega(\bi)}(\omega(P))=\psi_{\bi}(P)$, 
where $\omega(\bi):=(\omega(i_{1}),\,\omega(i_{2}),\,\dots,\,\omega(i_{m}))$ 
is also a reduced word for $w_{0} \in W$. 
\end{lem}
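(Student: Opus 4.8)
The plan is to unwind the definitions of $\psi_{\bi}$ and of the action of $\omega$ on $\mv$, and check that the length data defining $\psi_{\omega(\bi)}(\omega(P))$ coincides term-by-term with that defining $\psi_{\bi}(P)$. First I would record that $\omega(\bi)$ is indeed a reduced word for $w_0$: this is immediate from Remark~\ref{rem:omega}\,(1), since $\omega$ preserves the length function and fixes $w_0$, so applying $\omega$ to the reduced expression $w_0 = s_{i_1}s_{i_2}\cdots s_{i_m}$ gives $w_0 = \omega(w_0) = s_{\omega(i_1)}\cdots s_{\omega(i_m)}$, again reduced. In particular $w_k^{\omega(\bi)} = s_{\omega(i_1)}\cdots s_{\omega(i_k)} = \omega(s_{i_1}\cdots s_{i_k}) = \omega(w_k^{\bi})$ for each $k$.

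Next I would write $\omega(P) = P(\mu_{\bullet}')$ with $\mu_w' = \omega(\mu_{\omega(w)})$ as in \S\ref{subsec:damv}, and compute the length coefficients $L_k'$ determined by $\psi_{\omega(\bi)}(\omega(P)) = (L_1',\dots,L_m')$, namely $\mu'_{w_k^{\omega(\bi)}} - \mu'_{w_{k-1}^{\omega(\bi)}} = L_k'\, w_{k-1}^{\omega(\bi)}\cdot h_{\omega(i_k)}$. Substituting $w_k^{\omega(\bi)} = \omega(w_k^{\bi})$ and $\mu'_{\omega(w)} = \omega(\mu_{w})$, the left-hand side becomes $\omega(\mu_{w_k^{\bi}}) - \omega(\mu_{w_{k-1}^{\bi}}) = \omega(\mu_{w_k^{\bi}} - \mu_{w_{k-1}^{\bi}}) = \omega(L_k\, w_{k-1}^{\bi}\cdot h_{i_k}) = L_k\,\omega(w_{k-1}^{\bi}\cdot h_{i_k})$, where $L_k$ are the coefficients from $\psi_{\bi}(P) = (L_1,\dots,L_m)$. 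Using \eqref{eq:omg02} and $\omega(h_{i_k}) = h_{\omega(i_k)}$, this equals $L_k\,\omega(w_{k-1}^{\bi})\cdot h_{\omega(i_k)} = L_k\, w_{k-1}^{\omega(\bi)}\cdot h_{\omega(i_k)}$. Comparing with the defining relation for $L_k'$ and using that $w_{k-1}^{\omega(\bi)}\cdot h_{\omega(i_k)}$ is a nonzero element of $\Fh_{\BR}$ (so the coefficient is uniquely determined), we conclude $L_k' = L_k$ for all $k$, i.e. $\psi_{\omega(\bi)}(\omega(P)) = \psi_{\bi}(P)$.

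This argument is essentially a routine bookkeeping exercise; the only point requiring a little care is the compatibility $w_k^{\omega(\bi)} = \omega(w_k^{\bi})$ together with the linearity of $\omega$ and its action on coroots, all of which are supplied by Remark~\ref{rem:omega}. I do not anticipate a genuine obstacle here; if anything, the mildly delicate step is simply making sure the uniqueness of the length coefficients (which follows from $w_{k-1}^{\bi}\cdot h_{i_k}\neq 0$) is invoked so that equality of the two sides of the length formula forces equality of the scalars.
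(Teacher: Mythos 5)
Your proof is correct and follows essentially the same route as the paper: unwind $\mu'_{w_k^{\omega(\bi)}}=\omega(\mu_{w_k^{\bi}})$ from the definition of $\omega(\mu_{\bullet})$, push $\omega$ through the length formula using linearity and $\omega(w\cdot h)=\omega(w)\cdot\omega(h)$, and compare scalars. The only cosmetic difference is that you explicitly invoke $w_{k-1}^{\bi}\cdot h_{i_k}\neq 0$ to justify identifying coefficients, which the paper leaves tacit.
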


\begin{proof}
If we write $\psi_{\bi}(P) \in \BZ_{\ge 0}^{m}$ as 
$\psi_{\bi}(P)=(L_{1},\,L_{2},\,\dots,\,L_{m}) \in \BZ_{\ge 0}^{m}$, 
then by the definition, we have $\mu_{w_{k}^{\bi}}-\mu_{w_{k-1}^{\bi}}=
L_{k} w_{k-1}^{\bi} \cdot h_{i_{k}}$, with 
$w_{k}^{\bi}=s_{i_{1}}s_{i_{2}} \cdots s_{i_{k}}$, for $1 \le k \le m$. 
Similarly, if we write $\psi_{\omega(\bi)}(\omega(P)) \in \BZ_{\ge 0}^{m}$
as $\psi_{\omega(\bi)}(\omega(P))=
(L_{1}',\,L_{2}',\,\dots,\,L_{m}') \in \BZ_{\ge 0}^{m}$, 
and denote $\omega(\mu_{\bullet}) \in \ggms_{\MV}$ by 
$\mu_{\bullet}'=(\mu_{w}')_{w \in W}$, then we have
$\mu_{w_{k}^{\omega(\bi)}}'-
 \mu_{w_{k-1}^{\omega(\bi)}}'=
L_{k}' w_{k-1}^{\omega(\bi)} \cdot h_{\omega(i_{k})}$, 
with $w_{k}^{\omega(\bi)}=s_{\omega(i_{1})}s_{\omega(i_{2})} \cdots 
s_{\omega(i_{k})}$, for $1 \le k \le m$. Because 
$\mu_{w_{k}^{\omega(\bi)}}'= 
 \omega(\mu_{\omega(w_{k}^{\omega(\bi)})})=
 \omega(\mu_{w_{k}^{\bi}})$ for $1 \le k \le m$ 
by the definition of $\omega(\mu_{\bullet})$, we have
\begin{align*}
L_{k}' w_{k-1}^{\omega(\bi)} \cdot h_{\omega(i_{k})} 
& = 
\mu_{w_{k}^{\omega(\bi)}}'-
\mu_{w_{k-1}^{\omega(\bi)}}' 
=
\omega(\mu_{w_{k}^{\bi}})-
\omega(\mu_{w_{k-1}^{\bi}}) \\
 & = 
\omega(\mu_{w_{k}^{\bi}}- \mu_{w_{k-1}^{\bi}})=
\omega(L_{k}w_{k-1}^{\bi} \cdot h_{i_{k}}) \\
 & =
L_{k} \omega(w_{k-1}^{\bi}) \cdot \omega(h_{i_{k}})= 
L_{k} w_{k-1}^{\omega(\bi)} \cdot h_{\omega(i_{k})}, 
\end{align*}
from which it follows that $L_{k}=L_{k}'$ for all $1 \le k \le m$. 
This proves the lemma.
\end{proof}

Let $P=P(\mu_{\bullet}) \in \mv$ be an MV polytope with 
GGMS datum $\mu_{\bullet}=(\mu_{w})_{w \in W} \in \ggms_{\MV}$. 
Then, by Lemma~\ref{lem:psi}, we have 
$\psi_{\omega(\bi)}(\omega(P))=\psi_{\bi}(P)$ 
for a reduced word $\bi$ for $w_{0} \in W$. 
Since $\psi_{\omega(\bi)}:\mv \rightarrow \BZ_{\ge 0}^{m}$ is 
a bijection, it follows that
%
%
\begin{equation} \label{eq:psi}
\omega(P)=P
\quad \Longleftrightarrow \quad
\psi_{\omega(\bi)}(P)=\psi_{\bi}(P).
\end{equation}

Now we recall from \cite[Lemma~3.2.1]{NS1} that if 
$\ha{w}_{0}=\ha{s}_{j_{1}}\ha{s}_{j_{2}} \cdots \ha{s}_{j_{\ha{m}}}$, 
$j_{1},\,j_{2},\,\dots,\,j_{\ha{m}} \in \ha{I}$, is a reduced decomposition 
of the longest element $\ha{w}_{0}$ of $\ha{W}$, then 
$w_{0}=s_{j_{1}}^{\omega}s_{j_{2}}^{\omega} \cdots s_{j_{\ha{m}}}^{\omega}$ 
is a reduced decomposition of the longest element $w_{0}$ of $W$, where 
$s_{j}^{\omega}$, $j \in \ha{I}$, are as defined in \eqref{eq:sjo}. 
Using this fact, to each reduced word 
$\bj=(j_{1},\,j_{2},\,\dots,\,j_{\ha{m}})$ 
for $\ha{w}_{0} \in \ha{W}$, we associate a reduced word 
$\bi=(i_{1},\,i_{2},\,\dots,\,i_{m})$ for $w_{0} \in W$ 
as follows. For each $1 \le k \le \ha{m}$, we define 
elements $i_{l}^{(k)} \in I$, $1 \le l \le N_{k}$, 
where $N_{k}=\ell(s_{j_{k}}^{\omega})$, by: 
\begin{equation*}
\begin{cases}
\text{$i_{1}^{(k)}=j_{k}$,
$i_{2}^{(k)}=\omega(j_{k})$, 
with $N_{k}=2$, if $1 \le j_{k} \le n-1$}, \\[3mm]
\text{$i_{1}^{(k)}=j_{k}$,
with $N_{k}=1$, if $\ell=2n-1$, 
$n \in \BZ_{\ge 2}$, and $j_{k}=n$}, \\[3mm]
\text{$i_{1}^{(k)}=j_{k}$, $i_{2}^{(k)}=\omega(j_{k})$, $i_{3}^{(k)}=j_{k}$, 
with $N_{k}=3$, if $\ell=2n$, $n \in \BZ_{\ge 2}$, and $j_{k}=n$}. 
\end{cases}
\end{equation*}
Then we set
\begin{align*}
\bi 
 & = (i_{1},\,i_{2},\,\dots,\,i_{m}) \\
 & := \bigl(
      i_{1}^{(1)},\,\dots,\,i_{N_{1}}^{(1)},\ 
      i_{1}^{(2)},\,\dots,\,i_{N_{2}}^{(2)},\ \dots,\ 
      i_{1}^{(\ha{m})},\,\dots,\,i_{N_{\ha{m}}}^{(\ha{m})}
     \bigr) \in \BZ_{\ge 0}^{m},
\end{align*}
and call it the canonical reduced word for $w_{0} \in W$ 
associated to $\bj$. Recall that 
$\omega(\bi)=(\omega(i_{1}),\,\omega(i_{2}),\,\dots,\,\omega(i_{m}))$ 
is also a reduced word for $w_{0} \in W$. 
%
%
\begin{prop} \label{prop:L}
Let $P=P(\mu_{\bullet}) \in \mv^{\omega}$ be an MV polytope 
with GGMS datum $\mu_{\bullet}=(\mu_{w})_{w \in W} \in \ggms_{\MV}$. 
Let $\bj=(j_{1},\,j_{2},\,\dots,\,j_{\ha{m}})$ be a reduced word 
for $\ha{w}_{0} \in \ha{W}$, and 
$\bi=(i_{1},\,i_{2},\,\dots,\,i_{m})$ 
the associated canonical reduced word for $w_{0} \in W$. 
If we write $\psi_{\bi}(P) \in \BZ_{\ge 0}^{m}$ as 
\begin{align*}
\psi_{\bi}(P)
 & = (L_{1},\,L_{2},\,\dots,\,L_{m}) \\
 & = \bigl(
      L_{1}^{(1)},\,\dots,\,L_{N_{1}}^{(1)},\ 
      L_{1}^{(2)},\,\dots,\,L_{N_{2}}^{(2)},\ \dots,\ 
      L_{1}^{(\ha{m})},\,\dots,\,L_{N_{\ha{m}}}^{(\ha{m})}
     \bigr) \in \BZ_{\ge 0}^{m},
\end{align*}
then we have $L_{1}^{(k)}=\cdots=L_{N_{k}}^{(k)}$ 
for all $1 \le k \le \ha{m}$.
\end{prop}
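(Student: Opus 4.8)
The plan is to reformulate the hypothesis $P\in\mv^{\omega}$ as a statement about Lusztig data by means of \eqref{eq:psi}, and then to exploit that $\omega(\bi)$ is obtained from $\bi$ by braid moves performed independently inside the blocks of $\bi$.

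First I would note that $P\in\mv^{\omega}$ means $\omega(P)=P$, so \eqref{eq:psi} yields $\psi_{\omega(\bi)}(P)=\psi_{\bi}(P)=(L_{1},\dots,L_{m})$. Since $\psi_{\bi'}=R_{\bi}^{\bi'}\circ\psi_{\bi}$ for every reduced word $\bi'$ for $w_{0}$ (recall $R_{\bi}^{\bi'}=\psi_{\bi'}\circ\psi_{\bi}^{-1}$), this says precisely that $(L_{1},\dots,L_{m})$ is a fixed point of the transition map $R_{\bi}^{\omega(\bi)}$. Next I would spell out how $\omega(\bi)$ is related to $\bi$: the $k$-th block $(i_{1}^{(k)},\dots,i_{N_{k}}^{(k)})$ of $\bi$ is turned into $\omega$ of it, which is again a reduced word for $s_{j_{k}}^{\omega}$, obtained from the original block by a braid move inside the rank-$\le 2$ parabolic generated by $s_{j_{k}}$ and $s_{\omega(j_{k})}$ --- the commutation $(j_{k},\omega(j_{k}))\mapsto(\omega(j_{k}),j_{k})$ when $1\le j_{k}\le n-1$ (here $a_{j_{k},\,\omega(j_{k})}=0$, since for such $j_{k}$ the indices $j_{k}$ and $\omega(j_{k})$ are distinct and non-adjacent in the Dynkin diagram of $A_{\ell}$); nothing at all when $\ell=2n-1$ and $j_{k}=n$; and the length-$3$ braid move $(n,\omega(n),n)\mapsto(\omega(n),n,\omega(n))$ in the rank-$2$ subsystem of type $A_{2}$ when $\ell=2n$ and $j_{k}=n$ (here $a_{n,\,\omega(n)}=a_{\omega(n),\,n}=-1$).

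The step that I expect to be the main obstacle is to justify carefully that these transition maps are ``local'': that a braid move supported on a contiguous range of positions alters the Lusztig datum only within that range, and there it acts by the corresponding rank-$2$ transition map recorded in Remark~\ref{rem:trans}. Granting this --- it is a standard consequence of $\psi_{\bi'}=R_{\bi}^{\bi'}\circ\psi_{\bi}$ and of Theorem~\ref{thm:trans} --- and using that the blocks of $\bi$ occupy pairwise disjoint ranges of positions, the map $R_{\bi}^{\omega(\bi)}$ becomes block-diagonal, so the single relation $R_{\bi}^{\omega(\bi)}(L_{1},\dots,L_{m})=(L_{1},\dots,L_{m})$ decouples into one fixed-point equation for each block of $\bi$.

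Finally I would solve the fixed-point equation in each block. For a block with $1\le j_{k}\le n-1$, Remark~\ref{rem:trans}\,(0) gives the map $(t_{1},t_{2})\mapsto(t_{2},t_{1})$, whence $L_{1}^{(k)}=L_{2}^{(k)}$; for a block $(n)$ occurring when $\ell=2n-1$ there is nothing to prove; and for a block $(n,\omega(n),n)$ occurring when $\ell=2n$, tropicalizing Remark~\ref{rem:trans}\,(1) as in Theorem~\ref{thm:trans}\,(2) gives the map $(t_{1},t_{2},t_{3})\mapsto\bigl(t_{2}+t_{3}-\min(t_{1},t_{3}),\ \min(t_{1},t_{3}),\ t_{1}+t_{2}-\min(t_{1},t_{3})\bigr)$, and its only fixed points have $t_{1}=t_{2}=t_{3}$ (the middle coordinate forces $L_{2}^{(k)}=\min(L_{1}^{(k)},L_{3}^{(k)})$, and substituting this into the first coordinate forces $L_{1}^{(k)}=L_{3}^{(k)}$). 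In all cases this gives $L_{1}^{(k)}=\dots=L_{N_{k}}^{(k)}$, as desired.
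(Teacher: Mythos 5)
Your proof is correct and takes essentially the same route as the paper: use \eqref{eq:psi} to turn $\omega(P)=P$ into a fixed-point condition on the transition map $R_{\bi}^{\omega(\bi)}$, localize this map to each block of $\bi$ via the rank-$\le 2$ tropical formulas in Remark~\ref{rem:trans}, and solve the resulting small fixed-point equations blockwise. The paper writes out only the representative block (the case $\ell=2n$, $j_k=n$, $N_k=3$), phrasing the localization through an auxiliary reduced word $\bi'$ obtained by a single braid move together with a prefix-agreement observation, whereas you package the same computation more cleanly as block-diagonality of $R_{\bi}^{\omega(\bi)}$.
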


\begin{proof}
We prove the equalities 
$L_{1}^{(k)}=\cdots=L_{N_{k}}^{(k)}$ in the case that
$\ell=2n$, $n \in \BZ_{\ge 2}$, and $j_{k}=n$ (hence $N_{k}=3$); 
the proofs for the other cases are similar (or, even simpler). 
For simplicity of notation, we further assume that 
$k=1$ and $n=2$; we have 
\begin{equation*}
\bj=(2,\,1,\,2,\,1)
\quad \text{and} \quad
\bi=(2,\,3,\,2,\,1,\,4,\,2,\,3,\,2,\,1,\,4),
\end{equation*}
with $\ha{m}=4$ and $m=10$, and 
$(L_{1},\,L_{2},\,L_{3})=(L_{1}^{(1)},\,L_{2}^{(1)},\,L_{3}^{(1)})$. 
If we take a reduced word 
\begin{equation*}
\bi'=(3,\,2,\,3,\,1,\,4,\,2,\,3,\,2,\,1,\,4)
\end{equation*}
for $w_{0} \in W$, then 
the bijection $\psi_{\bi'} \circ \psi_{\bi}^{-1} : 
\BZ_{\ge 0}^{10} \rightarrow \BZ_{\ge 0}^{10}$ is identical 
to the transition map $R^{\bi'}_{\bi} : 
\BZ_{\ge 0}^{10} \rightarrow \BZ_{\ge 0}^{10}$. 
Therefore, by setting 
\begin{equation*}
\psi_{\bi'}(P)=(L_{1}',\,L_{2}',\,\dots,\,L_{10}')
\in \BZ_{\ge 0}^{10},
\end{equation*}
we obtain from Remark~\ref{rem:trans} the following relations 
(note that $a_{23}=a_{32}=-1$ in our case): 
%
%
\begin{equation} \label{eq:L01}
\begin{array}{l}
L_{1}'=L_{2}+L_{3}-p, \quad
L_{2}'=p, \\[3mm]
L_{3}'=L_{1}+L_{2}-p, \quad 
\text{where $p=\min (L_{1},\,L_{3})$}, \\[3mm]
\text{$L_{k}'=L_{k}$ for $4 \le k \le 10$}.
\end{array}
\end{equation}
Also, since $\omega(P)=P$ and 
$\omega(\bi)=(3,\,2,\,3,\,4,\,1,\,3,\,2,\,3,\,4,\,1)$, by setting 
\begin{equation*}
\psi_{\omega(\bi)}(P)=(L_{1}'',\,L_{2}'',\,\dots,\,L_{10}'') 
\in \BZ_{\ge 0}^{10},
\end{equation*}
we obtain from \eqref{eq:psi} the relation
$L_{k}''=L_{k}$ for all $1 \le k \le 10$. Since $L_{1}''=L_{1}'$, 
$L_{2}''=L_{2}'$, $L_{3}''=L_{3}'$ by the definitions 
(see \eqref{eq:length}), we have
%
%
\begin{equation} \label{eq:L02}
L_{1}=L_{1}', \quad 
L_{2}=L_{2}', \quad 
L_{3}=L_{3}'. 
\end{equation}
By combining \eqref{eq:L01} and \eqref{eq:L02}, we get 
\begin{equation*}
\begin{array}{l}
L_{1}=L_{2}+L_{3}-p, \quad 
L_{2}=p, \\[3mm]
L_{3}=L_{1}+L_{2}-p, \quad 
\text{where $p=\min (L_{1},\,L_{3})$}.
\end{array}
\end{equation*}
Hence we deduce that $L_{1}=L_{3}$, and then that 
$L_{2}=\min(L_{1},\,L_{3})=L_{1}$. 
This proves the proposition. 
\end{proof}

The argument in the proof of Proposition~\ref{prop:L} 
also shows the following proposition.
%
%
\begin{prop} \label{prop:L2}
Let $P=P(\mu_{\bullet}) \in \mv$ be an MV polytope 
with GGMS datum $\mu_{\bullet}=(\mu_{w})_{w \in W} \in \ggms_{\MV}$. 
Let $\bj=(j_{1},\,j_{2},\,\dots,\,j_{\ha{m}})$ be a reduced word 
for $\ha{w}_{0} \in \ha{W}$, and $\bi=(i_{1},\,i_{2},\,\dots,\,i_{m})$ 
the associated canonical reduced word for $w_{0} \in W$. 
We write $\psi_{\bi}(P) \in \BZ_{\ge 0}^{m}$ and 
$\psi_{\omega(\bi)}(P) \in \BZ_{\ge 0}^{m}$ as 
\begin{align*}
\psi_{\bi}(P)
 & = (L_{1},\,L_{2},\,\dots,\,L_{m}) \\
 & = \bigl(
      L_{1}^{(1)},\,\dots,\,L_{N_{1}}^{(1)},\ 
      L_{1}^{(2)},\,\dots,\,L_{N_{2}}^{(2)},\ \dots,\ 
      L_{1}^{(\ha{m})},\,\dots,\,L_{N_{\ha{m}}}^{(\ha{m})}
     \bigr) \in \BZ_{\ge 0}^{m}, \\
\psi_{\omega(\bi)}(P)
 & = (L_{1}',\,L_{2}',\,\dots,\,L_{m}') \\
 & = \bigl(
      L_{1}^{\prime (1)},\,\dots,\,L_{N_{1}}^{\prime (1)},\ 
      L_{1}^{\prime (2)},\,\dots,\,L_{N_{2}}^{\prime (2)},\ \dots,\ 
      L_{1}^{\prime (\ha{m})},\,\dots,\,L_{N_{\ha{m}}}^{\prime (\ha{m})}
     \bigr) \in \BZ_{\ge 0}^{m}. 
\end{align*}
If $L_{1}^{(k)}=L_{2}^{(k)}= \cdots =L_{N_{k}}^{(k)}$ 
for all $1 \le k \le \ha{m}$, then 
we have $L_{k}=L_{k}'$ for all $1 \le k \le m$. 
\end{prop}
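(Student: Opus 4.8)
The plan is to exploit the block structure of the canonical reduced word $\bi$ and to pass from $\bi$ to $\omega(\bi)$ one block at a time, each step being a single braid move performed inside one block. Write $\bi=(B_{1},B_{2},\dots,B_{\ha{m}})$ as the concatenation of its blocks, where $B_{k}=(i_{1}^{(k)},\dots,i_{N_{k}}^{(k)})$ is the reduced word for $s_{j_{k}}^{\omega}$ fixed just before Proposition~\ref{prop:L}. Since $\omega$ acts entrywise and fixes each $s_{j_{k}}^{\omega}$, we have $\omega(\bi)=(\omega(B_{1}),\dots,\omega(B_{\ha{m}}))$, and inspecting the three cases in the definition of the blocks shows that $B_{k}':=\omega(B_{k})$ is again a reduced word for $s_{j_{k}}^{\omega}$, obtained from $B_{k}$ by: the commutation braid move $(j_{k},\omega(j_{k}))\to(\omega(j_{k}),j_{k})$ when $1\le j_{k}\le n-1$ (one checks easily that then $a_{j_{k},\omega(j_{k})}=a_{\omega(j_{k}),j_{k}}=0$); no move when $\ell=2n-1$ and $j_{k}=n$; and the braid move $(n,n+1,n)\to(n+1,n,n+1)$ when $\ell=2n$ and $j_{k}=n$ (where $a_{n,n+1}=a_{n+1,n}=-1$).

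Next I would set $\bi^{(k)}:=(B_{1}',\dots,B_{k}',B_{k+1},\dots,B_{\ha{m}})$ for $0\le k\le\ha{m}$; each $\bi^{(k)}$ is a reduced word for $w_{0}$ (it is obtained from $\bi$ by braid moves), $\bi^{(0)}=\bi$, $\bi^{(\ha{m})}=\omega(\bi)$, and each passage $\bi^{(k-1)}\to\bi^{(k)}$ is the braid move above carried out inside the positions of the $k$-th block. As in the proof of Proposition~\ref{prop:L} (where the passage from $\bi$ to $\bi'$ is of exactly this type), the transition map $R_{\bi^{(k-1)}}^{\bi^{(k)}}=\psi_{\bi^{(k)}}\circ\psi_{\bi^{(k-1)}}^{-1}$ acts as the identity on all coordinates outside the $k$-th block and as the elementary transition map of Remark~\ref{rem:trans}\,(0) or (1) on the coordinates of the $k$-th block: indeed the partial products $w_{l}^{\bi^{(k-1)}}$ and $w_{l}^{\bi^{(k)}}$ agree for every $l$ before the $k$-th block and for every $l$ at or past its end, so by the length formula \eqref{eq:length} the components of $\psi$ at those coordinates are unchanged.

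The crux is then a short tropical computation showing that a constant block is sent to a constant block with the same value. If the parameters of $\bi^{(k-1)}$ satisfy $L_{1}^{(k)}=\dots=L_{N_{k}}^{(k)}=c$ on the $k$-th block, then in the commutation case the map $(t_{1},t_{2})\mapsto(t_{2},t_{1})$ returns $(c,c)$; in the $A_{2}$-braid case, the tropicalization of the formulas of Remark~\ref{rem:trans}\,(1) gives $\min(c,c)=c$, hence $c+c-c=c$ for the first and third components and $c$ for the second; and in the trivial case nothing changes. Writing $\psi_{\bi^{(k)}}(P)$ in block form, an induction on $k$ then shows that every block of $\psi_{\bi^{(k)}}(P)$ is constant and equals the corresponding block of $\psi_{\bi}(P)$: the base case $k=0$ is the hypothesis, and the step holds because $\bi^{(k-1)}\to\bi^{(k)}$ leaves every block but the $k$-th untouched and carries the $k$-th block, constant by the inductive hypothesis, to a constant block with the same value. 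Taking $k=\ha{m}$ gives that $\psi_{\omega(\bi)}(P)$ and $\psi_{\bi}(P)$ have the same blocks, i.e.\ $L_{k}=L_{k}'$ for all $1\le k\le m$.

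The only step I expect to require care is the locality assertion of the second paragraph, namely that a braid move confined to a single block induces a transition map which is the identity off that block and the appropriate elementary map on it; but this is precisely the mechanism already used in the proof of Proposition~\ref{prop:L}, and it follows from the length-formula description of $\psi_{\bi}$ together with Theorem~\ref{thm:trans}. Everything else is the elementary tropical arithmetic above and routine block bookkeeping.
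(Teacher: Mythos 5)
Your proof is correct and takes exactly the approach the paper intends: the paper merely writes ``the argument in the proof of Proposition~\ref{prop:L} also shows the following proposition,'' and your write-up is the natural unpacking of that remark, namely passing from $\bi$ to $\omega(\bi)$ by one braid move per block, invoking the locality of the transition map from the length formula, and checking via the tropicalized formulas of Remark~\ref{rem:trans} that a constant block is carried to a constant block with the same value.
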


%
\begin{cor} \label{cor:L}
Keep the notation and assumptions of Proposition~\ref{prop:L}. 
Let $\ha{P}:=\Phi(P(\mu_{\bullet}))=
P(\Phi(\mu_{\bullet})) \in \ha{\pwp}$ be a pseudo-Weyl polytope with 
GGMS datum $\ha{\mu}=\Phi(\mu_{\bullet})=
(\ha{\mu}_{\ha{w}})_{\ha{w} \in \ha{W}} \in \ha{\ggms}$. 
Then, for a reduced word $\bj=(j_{1},\,j_{2},\,\dots,\,j_{\ha{m}})$ 
for $\ha{w}_{0} \in \ha{W}$, we have 
$\ha{\mu}_{\ha{w}_{k}^{\bj}}-\ha{\mu}_{\ha{w}_{k-1}^{\bj}}=
L_{l}^{(k)}\ha{w}_{k-1}^{\bj} \cdot h_{j_{k}}^{\omega}$
for every $1 \le l \le N_{k}$, $1 \le k \le \ha{m}$, 
with $\ha{w}_{k}^{\bj}:=
\ha{s}_{j_{1}}\ha{s}_{j_{2}} \cdots \ha{s}_{j_{k}}$, 
$1 \le k \le \ha{m}$.
\end{cor}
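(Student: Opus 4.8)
The plan is to unwind the definition of $\Phi$, telescope along the $k$-th block of the canonical reduced word $\bi$, and then apply Proposition~\ref{prop:L}. Set $p_{0}:=0$ and $p_{k}:=N_{1}+N_{2}+\cdots+N_{k}$ for $1 \le k \le \ha{m}$, so that $p_{\ha{m}}=m$ and the $k$-th block of $\bi$ consists of the entries $i_{p_{k-1}+1},\dots,i_{p_{k}}$, i.e.\ $i_{p_{k-1}+l}=i_{l}^{(k)}$ for $1 \le l \le N_{k}$. Since $\bi$ is obtained from $\bj$ by replacing each letter $j_{k}$ with a reduced word for $s_{j_{k}}^{\omega}$ (see \eqref{eq:sjo}), and since $\Theta$ is a group homomorphism with $\Theta(\ha{s}_{i})=s_{i}^{\omega}$, the partial product $w_{p_{k}}^{\bi}:=s_{i_{1}}s_{i_{2}}\cdots s_{i_{p_{k}}}$ equals $s_{j_{1}}^{\omega}\cdots s_{j_{k}}^{\omega}=\Theta(\ha{w}_{k}^{\bj})$ for every $0 \le k \le \ha{m}$. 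Hence, by the definition $\ha{\mu}_{\ha{w}}=\mu_{\Theta(\ha{w})}$ of $\Phi$, we get $\ha{\mu}_{\ha{w}_{k}^{\bj}}=\mu_{w_{p_{k}}^{\bi}}$ for all $k$.

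Next I would compute, by telescoping and using the length formula defining $\psi_{\bi}$ (see \eqref{eq:length}),
\[
\ha{\mu}_{\ha{w}_{k}^{\bj}}-\ha{\mu}_{\ha{w}_{k-1}^{\bj}}
=\mu_{w_{p_{k}}^{\bi}}-\mu_{w_{p_{k-1}}^{\bi}}
=\sum_{l=1}^{N_{k}} L_{l}^{(k)}\,
\bigl(w_{p_{k-1}}^{\bi}\,s_{i_{1}^{(k)}}\cdots s_{i_{l-1}^{(k)}}\bigr)\cdot h_{i_{l}^{(k)}},
\]
where $w_{p_{k-1}}^{\bi}=\Theta(\ha{w}_{k-1}^{\bj})$. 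By Proposition~\ref{prop:L}, all of $L_{1}^{(k)},\dots,L_{N_{k}}^{(k)}$ equal a common value, say $L^{(k)}$; thus the right-hand side factors as $L^{(k)}\,\Theta(\ha{w}_{k-1}^{\bj})\cdot\bigl(\sum_{l=1}^{N_{k}}(s_{i_{1}^{(k)}}\cdots s_{i_{l-1}^{(k)}})\cdot h_{i_{l}^{(k)}}\bigr)$.

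It then remains to verify, in each of the three cases of \eqref{eq:sjo}, the coroot identity $\sum_{l=1}^{N_{k}}(s_{i_{1}^{(k)}}\cdots s_{i_{l-1}^{(k)}})\cdot h_{i_{l}^{(k)}}=h_{j_{k}}^{\omega}$ in $\Fh^{\omega}$. For $1 \le j_{k}\le n-1$ this reads $h_{j_{k}}+s_{j_{k}}\cdot h_{\omega(j_{k})}=h_{j_{k}}+h_{\omega(j_{k})}$, which holds since the nodes $j_{k}$ and $\omega(j_{k})$ are non-adjacent in the Dynkin diagram, so $a_{j_{k}\omega(j_{k})}=a_{\omega(j_{k})j_{k}}=0$ and hence $s_{j_{k}}\cdot h_{\omega(j_{k})}=h_{\omega(j_{k})}$; the case $j_{k}=n$ with $\ell=2n-1$ is trivial, as $h_{n}^{\omega}=h_{n}$; and the case $j_{k}=n$ with $\ell=2n$ follows from a short computation using $a_{n\omega(n)}=a_{\omega(n)n}=-1$, which gives $h_{n}+s_{n}\cdot h_{\omega(n)}+s_{n}s_{\omega(n)}\cdot h_{n}=2(h_{n}+h_{\omega(n)})=h_{n}^{\omega}$, in agreement with \eqref{eq:h-odd} and \eqref{eq:h-even}. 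Finally, since $h_{j_{k}}^{\omega}\in\Fh^{\omega}$, identity \eqref{eq:theta} lets me rewrite $\Theta(\ha{w}_{k-1}^{\bj})\cdot h_{j_{k}}^{\omega}=\ha{w}_{k-1}^{\bj}\cdot h_{j_{k}}^{\omega}$, so that $\ha{\mu}_{\ha{w}_{k}^{\bj}}-\ha{\mu}_{\ha{w}_{k-1}^{\bj}}=L^{(k)}\,\ha{w}_{k-1}^{\bj}\cdot h_{j_{k}}^{\omega}=L_{l}^{(k)}\,\ha{w}_{k-1}^{\bj}\cdot h_{j_{k}}^{\omega}$ for every $1 \le l \le N_{k}$, which is precisely the assertion. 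The only mildly delicate step is the bookkeeping matching the block boundaries $p_{k}$ of $\bi$ with the partial products $\Theta(\ha{w}_{k}^{\bj})$; the rest is a routine application of Proposition~\ref{prop:L} together with the explicit formulas \eqref{eq:h-odd} and \eqref{eq:h-even}.
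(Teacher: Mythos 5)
Your proof is correct and follows essentially the same route as the paper's: identify $\ha{\mu}_{\ha{w}_{k}^{\bj}}$ with $\mu_{\Theta(\ha{w}_{k}^{\bj})}$, telescope across the $k$-th block of $\bi$ via the length formula, invoke Proposition~\ref{prop:L} to factor out the common value $L^{(k)}$, and finish with \eqref{eq:theta}. The only difference is that the paper works through the single representative case $\ell=2n$, $j_{k}=n$, $k=1$, $n=2$ and declares the others similar or simpler, whereas you carry out the general argument once and isolate the clean coroot identity $\sum_{l=1}^{N_{k}}(s_{i_{1}^{(k)}}\cdots s_{i_{l-1}^{(k)}})\cdot h_{i_{l}^{(k)}}=h_{j_{k}}^{\omega}$, which you then verify in each of the three cases of \eqref{eq:sjo}; this is a tidier packaging of the same idea.
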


\begin{proof}
Again, we assume that $\ell=2n$, $n \in \BZ_{\ge 2}$, 
$j_{k}=n$, and further that $k=1$ and $n=2$. 
By the definition of $\ha{\mu}_{\bullet}=\Phi(\mu_{\bullet})$, 
we have $\ha{\mu}_{\ha{w}^{\bj}_{1}}-\ha{\mu}_{\ha{e}} = 
\mu_{\Theta(\ha{w}^{\bj}_{1})}-\mu_{\Theta(\ha{e})}$, 
where $\Theta(\ha{w}^{\bj}_{1}) = 
\Theta(\ha{s}_{j_{1}}) = 
s_{j_{1}}^{\omega}=s_{2}s_{3}s_{2}$
in our case. Also, recall that 
$(L_{1},\,L_{2},\,L_{3})=(L_{1}^{(1)},\,L_{2}^{(1)},\,L_{3}^{(1)})$ 
are determined via the length formula: 
\begin{equation*}
\begin{cases}
\mu_{s_{2}}-\mu_{e}=L_{1} h_{2}, \\[1.5mm]
\mu_{s_{2}s_{3}}-\mu_{s_{2}}=L_{2} s_{2} \cdot h_{3}, \\[1.5mm]
\mu_{s_{2}s_{3}s_{2}}-\mu_{s_{2}s_{3}}=L_{3} s_{2}s_{3} \cdot h_{2}. 
\end{cases}
\end{equation*}
Therefore, we have
\begin{align*}
\ha{\mu}_{\ha{w}^{\bj}_{1}}-\ha{\mu}_{\ha{e}} 
& = \mu_{s_{2}s_{3}s_{2}}-\mu_{e} \\
& = (\mu_{s_{2}s_{3}s_{2}}-\mu_{s_{2}s_{3}})+
    (\mu_{s_{2}s_{3}}-\mu_{s_{2}})+
    (\mu_{s_{2}}-\mu_{e}) \\
& = L_{3} s_{2}s_{3} \cdot h_{2} + 
    L_{2} s_{2} \cdot h_{3} + 
    L_{1} h_{2} \\
& = L_{3} h_{3} + 
    L_{2} (h_{2}+h_{3}) + 
    L_{1} h_{2} \\
& = (L_{1}+L_{2})h_{2}+(L_{2}+L_{3})h_{3}.
\end{align*}
Here we know from Proposition~\ref{prop:L} that
$L_{1}=L_{2}=L_{3}$. Hence we conclude that 
\begin{equation*}
\ha{\mu}_{\ha{w}^{\bj}_{1}}-\ha{\mu}_{\ha{e}} 
  = 2L_{1}(h_{2}+h_{3}) \\
  = L_{1}h_{2}^{\omega} 
  = L_{2}h_{2}^{\omega} 
  = L_{3}h_{2}^{\omega}. 
\end{equation*}
This proves the corollary. 
\end{proof}

Let $\mu_{\bullet}=(\mu_{w})_{w \in W} \in \ggms^{\omega}$, 
and set $\ha{\mu}_{\bullet}=\Phi(\mu_{\bullet})=
(\ha{\mu}_{\ha{w}})_{\ha{w} \in \ha{W}}$. Since 
$\ha{\mu}_{\ha{w}_{0}}=\mu_{w_{0}}=0$, we can show that 
$\ha{\mu}_{\ha{w}} \in \sum_{j \in \ha{I}} \BZ h_{j}^{\omega}$ 
for all $\ha{w} \in \ha{W}$ by repeated use of Corollary~\ref{cor:L};
take a reduced word $\bj=(j_{1},\,j_{2},\,\dots,\,j_{\ha{m}})$ for 
$\ha{w}_{0} \in \ha{W}$ such that $\ha{w}_{k}^{\bj}=\ha{w}$ 
for some $0 \le k \le \ha{m}$. 
Hence, for $\ha{M}_{\bullet}=\ha{D}(\ha{\mu}_{\bullet})=
(\ha{M}_{\ha{\gamma}})_{\ha{\gamma} \in \ha{\Gamma}}$, 
we have $\ha{M}_{\ha{\gamma}} \in \BZ$ 
for all $\ha{\gamma} \in \ha{\Gamma}$. 

Thus, it remains to prove that 
$\ha{M}_{\bullet}=\ha{D}(\ha{\mu}_{\bullet})=
(\ha{M}_{\ha{\gamma}})_{\ha{\gamma} \in \ha{\Gamma}}$ 
satisfies the tropical Pl\"ucker relations. 
We prove the tropical Pl\"ucker relation at 
$(\ha{w},\,n-1,\,n)$ for $\ha{w} \in \ha{W}$ in the case 
$\ell=2n$, $n \in \BZ_{\ge 2}$; note that 
$\ha{a}_{n-1,n}=-1$ and $\ha{a}_{n,n-1}=-2$. 
Since the proofs of the other tropical Pl\"ucker relations 
are similar (or, even simpler), we leave them to the reader. 
For simplicity of notation, we further assume that 
$\ha{w}=\ha{e}$ and $n=2$. Namely, we will prove 
%
%
\begin{equation} \label{eq:tp2-3}
\ha{M}_{\ha{s}_{2} \cdot \ha{\Lambda}_{2}}+ 
\ha{M}_{\ha{s}_{1}\ha{s}_{2} \cdot \ha{\Lambda}_{2}}+
\ha{M}_{\ha{s}_{1} \cdot \ha{\Lambda}_{1}} = 
\min \left(
 \begin{array}{l}
 2\ha{M}_{\ha{s}_{1}\ha{s}_{2} \cdot \ha{\Lambda}_{2}}+
 \ha{M}_{\ha{\Lambda}_{1}}, \\[3mm]
 2\ha{M}_{\ha{\Lambda}_{2}}+
 \ha{M}_{\ha{s}_{1}\ha{s}_{2}\ha{s}_{1} \cdot \ha{\Lambda}_{1}}, \\[3mm]
 \ha{M}_{\ha{\Lambda}_{2}} + 
 \ha{M}_{\ha{s}_{2}\ha{s}_{1}\ha{s}_{2} \cdot \ha{\Lambda}_{2}}+ 
 \ha{M}_{\ha{s}_{1} \cdot \ha{\Lambda}_{1}}
 \end{array}
 \right),
\end{equation}
%
%
\begin{equation} \label{eq:tp2-4}
\ha{M}_{\ha{s}_{2}\ha{s}_{1} \cdot \ha{\Lambda}_{1}}+ 
2\ha{M}_{\ha{s}_{1}\ha{s}_{2} \cdot \ha{\Lambda}_{2}}+
\ha{M}_{\ha{s}_{1} \cdot \ha{\Lambda}_{1}} = 
\min \left(
 \begin{array}{l}
 2\ha{M}_{\ha{\Lambda}_{2}}+
 2\ha{M}_{\ha{s}_{1}\ha{s}_{2}\ha{s}_{1} \cdot \ha{\Lambda}_{1}}, \\[3mm]
 2\ha{M}_{\ha{s}_{2}\ha{s}_{1}\ha{s}_{2} \cdot \ha{\Lambda}_{2}}+
 2\ha{M}_{\ha{s}_{1} \cdot \ha{\Lambda}_{1}}, \\[3mm]
 \ha{M}_{\ha{s}_{1}\ha{s}_{2}\ha{s}_{1} \cdot \ha{\Lambda}_{1}} + 
 2\ha{M}_{\ha{s}_{1}\ha{s}_{2} \cdot \ha{\Lambda}_{2}}+ 
 \ha{M}_{\ha{\Lambda}_{1}}
 \end{array}
 \right)
\end{equation}
(see \eqref{eq:tp2-1} and \eqref{eq:tp2-2}). 

We consider reduced words $\bj$ and $\bj'$
for $\ha{w}_{0} \in \ha{W}$ of the form
\begin{equation*}
\bj=(j_{1},\,j_{2},\,j_{3},\,j_{4}):=(1,\,2,\,1,\,2), \qquad 
\bj'=(j_{1}',\,j_{2}',\,j_{3}',\,j_{4}'):=(2,\,1,\,2,\,1).
\end{equation*}
As in the proof of \cite[Proposition~5.4]{Kam1}, 
we see by use of the length formula \eqref{eq:length} that 
$\ha{M}_{\bullet}=
 \ha{D}(\ha{\mu}_{\bullet})=
 (\ha{M}_{\ha{\gamma}})_{\ha{\gamma} \in \ha{\Gamma}}$ 
satisfies \eqref{eq:tp2-3} and \eqref{eq:tp2-4} if 
the following relations hold 
(cf. Remark~\ref{rem:trans}): 
%
%
\begin{equation} \label{eq:mk}
\begin{array}{l}
\ha{L}_{1}'
  =\ha{L}_{2}+\ha{L}_{3}+\ha{L}_{4}-\ha{p}_{1}, \quad 
\ha{L}_{2}'
  =2\ha{p}_{1}-\ha{p}_{2}, \\[3mm]
\ha{L}_{3}'
  =\ha{p}_{2}-\ha{p}_{1}, \quad 
\ha{L}_{4}'
  =\ha{L}_{1}+2\ha{L}_{2}+\ha{L}_{3}-\ha{p}_{2}, \quad \text{where} \\[3mm]
\ha{p}_{1}=\min 
  \bigl(
    \ha{L}_{1}+\ha{L}_{2}, \ 
    \ha{L}_{1}+\ha{L}_{4}, \ 
    \ha{L}_{3}+\ha{L}_{4}
  \bigr), \\[3mm]
\ha{p}_{2}=\min 
  \bigl(
    \ha{L}_{1}+2\ha{L}_{2}, \ 
    \ha{L}_{1}+2\ha{L}_{4}, \ 
    \ha{L}_{3}+2\ha{L}_{4}
  \bigr);
\end{array}
\end{equation}
here the $\ha{L}_{k}$, $1 \le k \le 4$, and the 
$\ha{L}_{k}'$, $1 \le k \le 4$, are determined via 
the length formula; 
\begin{equation*}
\begin{cases}
\ha{\mu}_{\ha{w}_{k}^{\bj}}-\ha{\mu}_{\ha{w}_{k-1}^{\bj}}=
\ha{L}_{k} \ha{w}_{k-1}^{\bj} \cdot h_{j_{k}}^{\omega}, 
& \text{with $\ha{w}_{k}^{\bj}=
  \ha{s}_{j_{1}}\ha{s}_{j_{2}} \cdots \ha{s}_{j_{k}}$}, \\[3mm]
\ha{\mu}_{\ha{w}_{k}^{\bj'}}-\ha{\mu}_{\ha{w}_{k-1}^{\bj'}}=
\ha{L}_{k}' \ha{w}_{k-1}^{\bj'} \cdot h_{j_{k}'}^{\omega}, 
& \text{with $\ha{w}_{k}^{\bj'}=
  \ha{s}_{j_{1}'}\ha{s}_{j_{2}'} \cdots \ha{s}_{j_{k}'}$}. 
\end{cases}
\end{equation*}
(Equation \eqref{eq:tp2-3} follows from 
the first one: $\ha{L}_{1}'=\ha{L}_{2}+\ha{L}_{3}+\ha{L}_{4}-\ha{p}_{1}$, 
and equation \eqref{eq:tp2-4} follows from 
the last one: $\ha{L}_{4}'=\ha{L}_{1}+2\ha{L}_{2}+\ha{L}_{3}-\ha{p}_{2}$.)
In other words, 
$\ha{M}_{\bullet}=
 \ha{D}(\ha{\mu}_{\bullet})=
 (\ha{M}_{\ha{\gamma}})_{\ha{\gamma} \in \ha{\Gamma}}$ 
satisfies \eqref{eq:tp2-3} and \eqref{eq:tp2-4} if 
\begin{equation*}
\ha{R}_{\bj}^{\bj'}
(\ha{L}_{1},\,\ha{L}_{2},\,\ha{L}_{3},\,\ha{L}_{4})=
(\ha{L}_{1}',\,\ha{L}_{2}',\,\ha{L}_{3}',\,\ha{L}_{4}'), 
\end{equation*}
where $\ha{R}_{\bj}^{\bj'}:\BZ_{\ge 0}^{4} \rightarrow \BZ_{\ge 0}^{4}$ 
is the transition map between Lusztig parametrizations of the canonical 
basis of $U_{q}^{-}((\Fg^{\omega})^{\vee})$. 
We know from Theorem~\ref{thm:trans} and 
\cite[Proposition~7.4]{BZ1} 
that the transition map $\ha{R}_{\bj}^{\bj'}:
\BZ_{\ge 0}^{4} \rightarrow \BZ_{\ge 0}^{4}$ is 
the tropicalization of the transition map 
$\ha{\CR}_{\bj}^{\bj'}:
\BR_{> 0}^{4} \rightarrow \BR_{> 0}^{4}$ defined by: 
$\ha{\CR}_{\bj}^{\bj'}=
(x_{\bj'}^{\omega})^{-1} \circ x_{\bj}^{\omega}$. 
Here we should remark that the bijections 
$x_{\bj}^{\omega},\,x_{\bj'}^{\omega}:\BR_{> 0}^{4} 
 \rightarrow N_{> 0} \cap G^{\omega}$ are given by: 
\begin{align*}
& x_{\bj}^{\omega}(u_{1},\,u_{2},\,u_{3},\,u_{4})=
  x_{j_{1}}^{\omega}(u_{1})
  x_{j_{2}}^{\omega}(u_{2})
  x_{j_{3}}^{\omega}(u_{3})
  x_{j_{4}}^{\omega}(u_{4}) \\
& x_{\bj'}^{\omega}(u_{1},\,u_{2},\,u_{3},\,u_{4})=
  x_{j_{1}'}^{\omega}(u_{1})
  x_{j_{2}'}^{\omega}(u_{2})
  x_{j_{3}'}^{\omega}(u_{3})
  x_{j_{4}'}^{\omega}(u_{4})
\end{align*}
for $(u_{1},\,u_{2},\,u_{3},\,u_{4}) \in \BR_{> 0}^{4}$, 
where $N_{> 0} \cap G^{\omega}$ is the set of fixed points of $N_{> 0}$ 
under the action of (the lifting to $G$ of) $\omega$, and 
$x_{j}^{\omega}(u):=\exp (u x_{j}^{\omega})$ 
for $u \in \BC$, $j \in \ha{I}$. 

Now, to the reduced words 
$\bj=(1,\,2,\,1,\,2)$ and $\bj'=(2,\,1,\,2,\,1)$ 
for $\ha{w}_{0} \in \ha{W}$, we associate canonical 
reduced words 
\begin{equation*}
\bi=(1,\,4,\,2,\,3,\,2,\,1,\,4,\,2,\,3,\,2)
\quad \text{and} \quad
\bi'=(2,\,3,\,2,\,1,\,4,\,2,\,3,\,2,\,1,\,4)
\end{equation*}
for $w_{0} \in W$, and set 
\begin{equation*}
\psi_{\bi}(P(\mu_{\bullet}))=
(L_{1},\,L_{2},\,\dots,\,L_{10}) \in \BZ_{\ge 0}^{10}
\quad \text{and} \quad
\psi_{\bi'}(P(\mu_{\bullet}))=
(L_{1}',\,L_{2}',\,\dots,\,L_{10}') \in \BZ_{\ge 0}^{10}.
\end{equation*}
Then we have 
\begin{equation*}
R_{\bi}^{\bi'}(L_{1},\,L_{2},\,\dots,\,L_{10})=
(L_{1}',\,L_{2}',\,\dots,\,L_{10}'). 
\end{equation*}
Also, since $\omega(P(\mu_{\bullet}))=P(\mu_{\bullet})$, 
it follows from Proposition~\ref{prop:L} and 
Corollary~\ref{cor:L} that 
%
%
\begin{equation} \label{eq:L03}
\begin{cases}
L_{1}=L_{2}=\ha{L}_{1}, &  \\[1.5mm]
L_{3}=L_{4}=L_{5}=\ha{L}_{2}, & \\[1.5mm]
L_{6}=L_{7}=\ha{L}_{3}, & \\[1.5mm]
L_{8}=L_{9}=L_{10}=\ha{L}_{4}, & 
\end{cases}
\qquad \text{and} \qquad
\begin{cases}
L_{1}'=L_{2}'=L_{3}'=\ha{L}_{1}', & \\[1.5mm]
L_{4}'=L_{5}'=\ha{L}_{2}', & \\[1.5mm]
L_{6}'=L_{7}'=L_{8}'=\ha{L}_{3}', & \\[1.5mm]
L_{9}'=L_{10}'=\ha{L}_{4}'.
\end{cases}
\end{equation}
Thus, summarizing the above, 
what we must show is that 
if $(L_{1},\,L_{2},\,\dots,\,L_{10}) \in \BZ_{\ge 10}$ and 
$(L_{1}',\,L_{2}',\,\dots,\,L_{10}') \in \BZ_{\ge 10}$ 
are related by the transition map as
\begin{equation*}
R_{\bi}^{\bi'}(L_{1},\,L_{2},\,\dots,\,L_{10})=
 (L_{1}',\,L_{2}',\,\dots,\,L_{10}'), 
\end{equation*}
and if the relations \eqref{eq:L03} hold, then 
the relation 
\begin{equation*}
\ha{R}_{\bj}^{\bj'}
 (\ha{L}_{1},\,\ha{L}_{2},\,\ha{L}_{3},\,\ha{L}_{4})=
 (\ha{L}_{1}',\,\ha{L}_{2}',\,\ha{L}_{3}',\,\ha{L}_{4}')
\end{equation*}
holds. To show this statement, 
in view of the functoriality of the tropicalization 
(see \cite[Corollary~2.10]{BK}, and also 
\cite[Propositions~1.9 and 1.10]{NY}), 
it suffices to show the following lemma. 
%
%
\begin{lem} \label{lem:R}
Define a map $H_{1}:\BR_{> 0}^{4} \rightarrow \BR_{> 0}^{10}$
by\,{\rm:}
\begin{equation*}
H_{1}(u_{1},\,u_{2},\,u_{3},\,u_{4})=
\left(
 u_{1},\,u_{1},\ 
 \frac{u_{2}}{\sqrt{2}},\,\sqrt{2}u_{2},\,\frac{u_{2}}{\sqrt{2}}, \ 
 u_{3},\,u_{3}, \ 
 \frac{u_{4}}{\sqrt{2}},\,\sqrt{2}u_{4},\,\frac{u_{4}}{\sqrt{2}} 
\right), 
\end{equation*}
and a map $H_{2}:\BR_{> 0}^{10} \rightarrow \BR_{> 0}^{4}$ 
by\,{\rm:}
\begin{equation*}
H_{1}(t_{1},\,t_{2},\,\dots,\,t_{10})=
(\sqrt{2}t_{1},\,t_{4},\,\sqrt{2}t_{6},\,t_{9}).
\end{equation*}
Then, the composition 
$H_{2} \circ \CR_{\bi}^{\bi'} \circ H_{1}:\BR_{> 0}^{4} 
\rightarrow \BR_{> 0}^{4}$ is identical to the transition map 
$\ha{\CR}_{\bj}^{\bj'}: \BR_{> 0}^{4} \rightarrow \BR_{> 0}^{4}$.
\end{lem}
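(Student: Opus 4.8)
The plan is to recognize $H_{1}$ and $H_{2}$ as the combinatorial shadow of the ``folding'' of one-parameter subgroups, thereby reducing the claimed identity of transition maps to two bookkeeping identities: $x_{\bi}\circ H_{1}=x_{\bj}^{\omega}$, and $H_{2}\circ H_{1}'=\mathrm{id}$, where $H_{1}'\colon\BR_{>0}^{4}\to\BR_{>0}^{10}$ denotes the analogue of $H_{1}$ attached to the canonical reduced word $\bi'$ of $\bj'$.

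First I would record two local factorizations inside $G$ (here $\Fg$ is of type $A_{4}$, $\Fg^{\omega}$ of type $B_{2}$, so $n=2$). Since the nodes $1$ and $4$ of the $A_{4}$-diagram are non-adjacent, $x_{1}$ and $x_{4}$ commute, so $x_{1}^{\omega}(u)=\exp(u(x_{1}+x_{4}))=x_{1}(u)x_{4}(u)$. For the folded node $n=2$, working in the rank-two subgroup of type $A_{2}$ generated by the $x_{2}(t)$, $x_{3}(t)$ --- in which $[x_{2},x_{3}]$ is a highest root vector, so the iterated brackets $[x_{2},[x_{2},x_{3}]]$ and $[x_{3},[x_{2},x_{3}]]$ vanish --- a short matrix computation (in the subgroup isomorphic to $SL_{3}(\BC)$) gives $\exp(\lambda(x_{2}+x_{3}))=x_{2}(\lambda/2)\,x_{3}(\lambda)\,x_{2}(\lambda/2)$; setting $\lambda=\sqrt{2}\,u$ and using $x_{2}^{\omega}=\sqrt{2}(x_{2}+x_{3})$ yields $x_{2}^{\omega}(u)=x_{2}(u/\sqrt{2})\,x_{3}(\sqrt{2}\,u)\,x_{2}(u/\sqrt{2})$. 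These are exactly the two blocks encoded by $H_{1}$ (entries $u,u$ and $u/\sqrt{2},\sqrt{2}\,u,u/\sqrt{2}$), so from $\bi=(1,4,2,3,2,1,4,2,3,2)$ and $\bj=(1,2,1,2)$ I would conclude $x_{\bi}(H_{1}(u_{1},u_{2},u_{3},u_{4}))=x_{1}^{\omega}(u_{1})x_{2}^{\omega}(u_{2})x_{1}^{\omega}(u_{3})x_{2}^{\omega}(u_{4})=x_{\bj}^{\omega}(u_{1},u_{2},u_{3},u_{4})$. Defining $H_{1}'$ by the same recipe from $\bi'=(2,3,2,1,4,2,3,2,1,4)$ and $\bj'=(2,1,2,1)$, the identical computation gives $x_{\bi'}\circ H_{1}'=x_{\bj'}^{\omega}$.

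Next I would observe that $H_{2}\circ H_{1}'=\mathrm{id}_{\BR_{>0}^{4}}$ directly from the explicit formulas, and that, since $x_{\bj}^{\omega}$ and $x_{\bj'}^{\omega}=x_{\bi'}\circ H_{1}'$ both map bijectively onto $N_{>0}\cap G^{\omega}$ while $x_{\bi'}\colon\BR_{>0}^{10}\to N_{>0}$ is a bijection, the map $x_{\bi'}^{-1}\circ x_{\bj}^{\omega}$ takes its values in $H_{1}'(\BR_{>0}^{4})$ --- precisely the subset on which $H_{2}$ restricts to the inverse of $H_{1}'$. Chaining the pieces then finishes the argument: $H_{2}\circ\CR_{\bi}^{\bi'}\circ H_{1}=H_{2}\circ x_{\bi'}^{-1}\circ x_{\bi}\circ H_{1}=H_{2}\circ x_{\bi'}^{-1}\circ x_{\bj}^{\omega}=(H_{1}')^{-1}\circ x_{\bi'}^{-1}\circ x_{\bj}^{\omega}=(x_{\bi'}\circ H_{1}')^{-1}\circ x_{\bj}^{\omega}=(x_{\bj'}^{\omega})^{-1}\circ x_{\bj}^{\omega}=\ha{\CR}_{\bj}^{\bj'}$.

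The only genuine computation, and hence the main obstacle, is the folded-node factorization $\exp(\lambda(x_{2}+x_{3}))=x_{2}(\lambda/2)x_{3}(\lambda)x_{2}(\lambda/2)$ together with pinning down the $\sqrt{2}$-normalizations so that $H_{1}$ and $H_{2}$ come out exactly as in the statement; everything else is formal manipulation of the two parametrizations of $N_{>0}\cap G^{\omega}$ and of the canonical-reduced-word construction. (The misprint ``$H_{1}$'' for ``$H_{2}$'' on the left-hand side of the definition of $H_{2}$ is harmless.)
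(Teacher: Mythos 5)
Your proposal is correct and follows essentially the same route as the paper: both rest on the block factorizations $x_{\bj}^{\omega}=x_{\bi}\circ H_{1}$ and $x_{\bj'}^{\omega}=x_{\bi'}\circ H_{1}'$, together with $H_{2}\circ H_{1}'=\mathrm{id}$, chained through the definitions of $\CR_{\bi}^{\bi'}$ and $\ha{\CR}_{\bj}^{\bj'}$. The only real difference is that the paper obtains those two factorizations by citing \cite[Proposition~7.4(b)]{BZ1}, whereas you rederive them from the local $SL_{3}$ computation $\exp\bigl(\lambda(x_{2}+x_{3})\bigr)=x_{2}(\lambda/2)\,x_{3}(\lambda)\,x_{2}(\lambda/2)$ and the commutativity of $x_{1}$ and $x_{4}$, which is a correct and self-contained substitute for that citation.
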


\begin{proof}
Let $(u_{1},\,u_{2},\,u_{3},\,u_{4}) \in \BR_{> 0}^{4}$, 
and set $(u_{1}',\,u_{2}',\,u_{3}',\,u_{4}'):=
\ha{\CR}_{\bj}^{\bj'}(u_{1},\,u_{2},\,u_{3},\,u_{4}) \in \BR_{> 0}^{4}$. 
Then it follows from the definition of $\ha{\CR}_{\bj}^{\bj'}$ that 
%
%
\begin{equation} \label{eq:xjo01}
x_{\bj}^{\omega}(u_{1},\,u_{2},\,u_{3},\,u_{4}) = 
x_{\bj'}^{\omega}(u_{1}',\,u_{2}',\,u_{3}',\,u_{4}').  
\end{equation}
Here we know from \cite[Proposition~7.4(b)]{BZ1} that 
%
%
\begin{equation} \label{eq:xjo02}
\begin{array}{l}
x_{\bj}^{\omega}(u_{1},\,u_{2},\,u_{3},\,u_{4}) = 
  x_{\bi}
   \left(
    u_{1},\,u_{1},\ 
    \dfrac{u_{2}}{\sqrt{2}},\,\sqrt{2}u_{2},\,\dfrac{u_{2}}{\sqrt{2}}, \ 
    u_{3},\,u_{3}, \ 
    \dfrac{u_{4}}{\sqrt{2}},\,\sqrt{2}u_{4},\,\dfrac{u_{4}}{\sqrt{2}} 
   \right), \\[7.5mm]
x_{\bj'}^{\omega}(u_{1}',\,u_{2}',\,u_{3}',\,u_{4}') = 
  x_{\bi'}\left(
    \dfrac{u_{1}'}{\sqrt{2}},\,\sqrt{2}u_{1}',\,\dfrac{u_{1}'}{\sqrt{2}}, \ 
    u_{2}',\,u_{2}', \ 
    \dfrac{u_{3}'}{\sqrt{2}},\,\sqrt{2}u_{3}',\,\dfrac{u_{3}'}{\sqrt{2}}, \ 
    u_{4}',\,u_{4}'
  \right).
\end{array}
\end{equation}
By combining \eqref{eq:xjo01} and \eqref{eq:xjo02}, 
we conclude that 
\begin{equation*}
(H_{2} \circ \CR_{\bi}^{\bi'} \circ H_{1})
 (u_{1},\,u_{2},\,u_{3},\,u_{4})=
 (u_{1}',\,u_{2}',\,u_{3}',\,u_{4}'),
\end{equation*}
which is the desired equality. This proves the lemma. 
\end{proof}

Thus, we have completed the proof of 
the tropical Pl\"ucker relations for 
$\ha{M}_{\bullet}=
 \ha{D}(\ha{\mu}_{\bullet})=
 (\ha{M}_{\ha{\gamma}})_{\ha{\gamma} \in \ha{\Gamma}}$, 
and hence the proof of Proposition~\ref{prop:phi-bz}.

%
\subsection{Proof of Theorem~\ref{thm:fixed-mv}.}
\label{subsec:fixed-mv}

This subsection is devoted to the proof of Theorem~\ref{thm:fixed-mv}. 
We keep the notation and assumptions of \S\ref{subsec:phi}.
We first prove the following proposition, which is a part of 
Theorem~\ref{thm:fixed-mv}\,(3). 
%
%
\begin{prop} \label{prop:bij}
The map $\Phi:\mv^{\omega} \rightarrow \ha{\mv}$ is a bijection.
\end{prop}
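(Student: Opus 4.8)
The plan is to reduce the bijectivity of $\Phi$ to that of the Lusztig parametrization $\psi_{\bi}$ (\cite[Theorem~7.1]{Kam1}) and of its counterpart $\ha{\psi}_{\bj}:\ha{\mv}\to\BZ_{\ge 0}^{\ha{m}}$ for $\Fg^{\omega}$ (defined exactly as $\psi_{\bi}$ was for $\Fg$), by exploiting the ``block structure'' of Lusztig parameters of $\omega$-fixed polytopes coming from Propositions~\ref{prop:L} and \ref{prop:L2}. First I would fix a reduced word $\bj=(j_{1},\dots,j_{\ha{m}})$ for $\ha{w}_{0}\in\ha{W}$ and let $\bi=(i_{1},\dots,i_{m})$ be the associated canonical reduced word for $w_{0}\in W$, with block lengths $N_{1},\dots,N_{\ha{m}}$. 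Call a tuple $(L_{1},\dots,L_{m})\in\BZ_{\ge 0}^{m}$ \emph{block-constant} if $L_{1}^{(k)}=\cdots=L_{N_{k}}^{(k)}$ for all $k$ (notation as in Proposition~\ref{prop:L}), writing $L^{(k)}$ for the common value of the $k$-th block; the assignment $(L_{1},\dots,L_{m})\mapsto(L^{(1)},\dots,L^{(\ha{m})})$ is then a bijection from the set of block-constant tuples onto $\BZ_{\ge 0}^{\ha{m}}$, with inverse ``repeat $L^{(k)}$ exactly $N_{k}$ times''. The bookkeeping lemma I would record next is that, for $P\in\mv^{\omega}$, the tuple $\psi_{\bi}(P)$ is block-constant (Proposition~\ref{prop:L}), that $\Phi(P)\in\ha{\mv}$ (Proposition~\ref{prop:phi-bz}), and that, by the definition of $\ha{\psi}_{\bj}$ via the length formula together with Corollary~\ref{cor:L},
\[
\ha{\psi}_{\bj}\bigl(\Phi(P)\bigr)=\bigl(L^{(1)},\dots,L^{(\ha{m})}\bigr)\quad\text{where }\psi_{\bi}(P)=(L_{1},\dots,L_{m});
\]
that is, on $\mv^{\omega}$ the composite $\ha{\psi}_{\bj}\circ\Phi$ equals $\psi_{\bi}$ followed by the compression map.

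Injectivity of $\Phi$ is then immediate: if $P,P'\in\mv^{\omega}$ and $\Phi(P)=\Phi(P')$, then $\ha{\psi}_{\bj}(\Phi(P))=\ha{\psi}_{\bj}(\Phi(P'))$, so by the displayed identity the compressions of $\psi_{\bi}(P)$ and $\psi_{\bi}(P')$ agree; since both tuples are block-constant, each is recovered from its compression, whence $\psi_{\bi}(P)=\psi_{\bi}(P')$ and, $\psi_{\bi}$ being a bijection, $P=P'$.

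For surjectivity, given $\ha{P}\in\ha{\mv}$ I would put $(\ha{L}_{1},\dots,\ha{L}_{\ha{m}}):=\ha{\psi}_{\bj}(\ha{P})$, let $(L_{1},\dots,L_{m})\in\BZ_{\ge 0}^{m}$ be the block-constant tuple with $L^{(k)}=\ha{L}_{k}$ for all $k$, and set $P:=\psi_{\bi}^{-1}(L_{1},\dots,L_{m})\in\mv$. Since $(L_{1},\dots,L_{m})$ is block-constant, Proposition~\ref{prop:L2} gives $\psi_{\omega(\bi)}(P)=\psi_{\bi}(P)$, hence $\omega(P)=P$ by \eqref{eq:psi}, so $P\in\mv^{\omega}$; then the displayed identity yields $\ha{\psi}_{\bj}(\Phi(P))=(\ha{L}_{1},\dots,\ha{L}_{\ha{m}})=\ha{\psi}_{\bj}(\ha{P})$, and $\Phi(P)=\ha{P}$ follows because $\ha{\psi}_{\bj}$ is injective.

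I expect the only delicate step to be surjectivity, specifically the verification that the candidate preimage $P=\psi_{\bi}^{-1}(L_{\bullet})$ is genuinely $\omega$-fixed; this is precisely what Proposition~\ref{prop:L2} (the converse direction to Proposition~\ref{prop:L}) supplies, through the equivalence \eqref{eq:psi}. Everything else is routine tracking of the block decomposition of the canonical reduced word through $\psi_{\bi}$, $\ha{\psi}_{\bj}$ and the definition of $\Phi$; in particular one should make explicit that $\ha{\psi}_{\bj}$ is defined as the $\Fg^{\omega}$-analogue of $\psi_{\bi}$, and that Corollary~\ref{cor:L} computes $\ha{\psi}_{\bj}\circ\Phi$ on $\mv^{\omega}$ exactly as stated.
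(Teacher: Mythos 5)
Your proof is correct and follows essentially the same route as the paper: both arguments fix a reduced word $\bj$ for $\ha{w}_{0}$, pass to the associated canonical reduced word $\bi$, and use Proposition~\ref{prop:L}, Corollary~\ref{cor:L}, Proposition~\ref{prop:L2}, and \eqref{eq:psi} together with the bijectivity of $\psi_{\bi}$ to conclude both injectivity and surjectivity. The only cosmetic difference is that you make the $\Fg^{\omega}$-side Lusztig parametrization $\ha{\psi}_{\bj}$ and the ``block-constant/compression'' bookkeeping explicit, whereas the paper leaves that implicit in its appeal to Corollary~\ref{cor:L}.
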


\begin{proof}
First we show the injectivity of 
$\Phi:\mv^{\omega} \rightarrow \ha{\mv}$. 
Let $P \in \mv^{\omega}$ and $P' \in \mv^{\omega}$ be 
such that $\Phi(P)=\Phi(P')$. 
Take a reduced word 
$\bj=(j_{1},\,j_{2},\,\dots,\,j_{\ha{m}})$ for 
$\ha{w}_{0} \in \ha{W}$, and the associated 
canonical reduced word $\bi=(i_{1},\,i_{2},\,\dots,\,i_{m})$ 
for $w_{0} \in W$. Then we see from Proposition~\ref{prop:L} 
and Corollary~\ref{cor:L} that $\psi_{\bi}(P)=\psi_{\bi}(P')$, 
which implies that $P=P'$ since $\psi_{\bi}:\mv \rightarrow 
\BZ_{\ge 0}^{m}$ is a bijection. Thus, the injectivity of 
$\Phi:\mv^{\omega} \rightarrow \ha{\mv}$ follows.

Next we show the surjectivity of 
$\Phi:\mv^{\omega} \rightarrow \ha{\mv}$. 
Let $\ha{P}=\ha{P}(\ha{\mu}_{\bullet})$ be an 
element of $\ha{\mv}$ with GGMS datum 
$\ha{\mu}_{\bullet}=
 (\ha{\mu}_{\ha{w}})_{\ha{w} \in \ha{W}}
 \in \ha{\ggms}_{\MV}$. 
Take a reduced word 
$\bj=(j_{1},\,j_{2},\,\dots,\,j_{\ha{m}})$ for 
$\ha{w}_{0} \in \ha{W}$, and the associated 
canonical reduced word $\bi=(i_{1},\,i_{2},\,\dots,\,i_{m})$ 
for $w_{0} \in W$. By the length formula, we have 
$\ha{\mu}_{\ha{w}_{k}^{\bj}}-\ha{\mu}_{\ha{w}_{k-1}^{\bj}}=
\ha{L}_{k} \ha{w}_{k-1}^{\bj} \cdot h_{j_{k}}^{\omega}$,
with $\ha{w}_{k}^{\bj}=
\ha{s}_{j_{1}}\ha{s}_{j_{2}} \cdots \ha{s}_{j_{k}}$, 
for $1 \le k \le \ha{m}$. 
Now we define an element $P=P(\mu_{\bullet})$ of $\mv$ to 
be a unique preimage under 
$\psi_{\bi}:\mv \rightarrow \BZ_{\ge 0}^{m}$ of the element
\begin{equation*}
(L_{1},\,L_{2},\,\dots,\,L_{m}):= 
\bigl(
  \underbrace{\ha{L}_{1},\,\dots,\,\ha{L}_{1}}_{\text{$N_{1}$ times}}, \ 
  \underbrace{\ha{L}_{2},\,\dots,\,\ha{L}_{2}}_{\text{$N_{2}$ times}}, \ 
  \dots,\ 
  \underbrace{\ha{L}_{\ha{m}},\,\dots,\,\ha{L}_{\ha{m}}}_{\text{$N_{\ha{m}}$ times}}
\bigr)
\end{equation*}
of $\BZ_{\ge 0}^{m}$. Then, from Proposition~\ref{prop:L2} 
together with \eqref{eq:psi}, we deduce that $\omega(P)=P$. 
Moreover, it follows from Corollary~\ref{cor:L} that 
$\Phi(P)=\ha{P}$. Thus, the surjectivity of 
$\Phi:\mv^{\omega} \rightarrow \ha{\mv}$ follows. 
This proves the proposition.
\end{proof}

To prove part (1) of Theorem~\ref{thm:fixed-mv}, 
we need the following lemma. 
%
%
\begin{lem} \label{lem:fo}
Let $P=P(\mu_{\bullet}) \in \mv$ be an MV polytope with 
GGMS datum $\mu_{\bullet}=(\mu_{w})_{w \in W} \in \ggms_{\MV}$. 
Then, we have $\omega(f_{j}P)=f_{\omega(j)}\,\omega(P)$ 
for all $j \in I$.
\end{lem}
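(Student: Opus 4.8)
The plan is to argue directly with GGMS data, using the description of the lowering Kashiwara operator $f_j$ recalled in \S\ref{subsec:MV} together with the order-preservation properties of $\omega$ collected in Remark~\ref{rem:omega}. First I would write $f_jP = P(\nu_\bullet)$ with $\nu_\bullet = (\nu_w)_{w\in W} \in \ggms_{\MV}$; recall that this datum is characterized by the conditions that $P(\nu_\bullet) \in \mv$, that $\nu_e = \mu_e - h_j$, and that $\nu_w = \mu_w$ for every $w \in W$ with $s_jw < w$. Since $\mv$ is stable under the action of $\omega$ (see \S\ref{subsec:damv}), we get $\omega(f_jP) = \omega(P(\nu_\bullet)) = P(\omega(\nu_\bullet)) \in \mv$, where $\omega(\nu_\bullet) = (\omega(\nu_{\omega(w)}))_{w\in W}$.

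Next I would apply the same characterization, now for the operator $f_{\omega(j)}$ and the MV polytope $\omega(P) = P(\omega(\mu_\bullet)) \in \mv$. By the uniqueness built into that characterization, it suffices to check that the datum $\omega(\nu_\bullet)$ has $e$-vertex equal to $(\omega(\mu_\bullet))_e - h_{\omega(j)}$ and agrees with $\omega(\mu_\bullet)$ at every $w$ with $s_{\omega(j)}w < w$. For the $e$-vertex: $(\omega(\nu_\bullet))_e = \omega(\nu_e) = \omega(\mu_e - h_j) = \omega(\mu_e) - h_{\omega(j)} = (\omega(\mu_\bullet))_e - h_{\omega(j)}$, using $\omega(h_j) = h_{\omega(j)}$. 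For the agreement: if $s_{\omega(j)}w < w$, then applying $\omega$, which preserves the (weak) Bruhat ordering by Remark~\ref{rem:omega}\,(1), together with $\omega(s_{\omega(j)}) = s_j$ (as $\omega^2 = \mathrm{id}$), yields $s_j\,\omega(w) < \omega(w)$; hence $\nu_{\omega(w)} = \mu_{\omega(w)}$, and therefore $(\omega(\nu_\bullet))_w = \omega(\nu_{\omega(w)}) = \omega(\mu_{\omega(w)}) = (\omega(\mu_\bullet))_w$. Uniqueness then forces $P(\omega(\nu_\bullet)) = f_{\omega(j)}\,\omega(P)$, i.e. $\omega(f_jP) = f_{\omega(j)}\,\omega(P)$.

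The only genuinely delicate point is the index-and-order bookkeeping in the last step, namely the equivalence $s_{\omega(j)}w < w \iff s_j\,\omega(w) < \omega(w)$; but this is immediate from the order-preservation of $\omega$ on $W$ and $\omega^2 = \mathrm{id}$, and everything else reduces to a direct substitution into the defining conditions of $f_j$. Note that no $\bzero$ case intervenes here, since $f_j$ is everywhere defined on $\mv$.
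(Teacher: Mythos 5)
Your proof is correct, but it takes a genuinely different route from the paper's. The paper proves Lemma~\ref{lem:fo} by passing to Lusztig parametrizations: it fixes a reduced word $\bi$ for $w_{0}$ with $i_{1}=j$, uses Kamnitzer's formula $\psi_{\bi}(f_{j}P)=(L_{1}+1,L_{2},\dots,L_{m})$, invokes Lemma~\ref{lem:psi} (that $\psi_{\omega(\bi)}(\omega(P))=\psi_{\bi}(P)$), and concludes from the bijectivity of $\psi_{\omega(\bi)}$. You instead work entirely at the level of GGMS data, using the uniqueness characterization of $f_{j}P$ given in \S\ref{subsec:MV} (that $f_{j}P$ is the unique MV polytope whose GGMS datum has $e$-vertex $\mu_{e}-h_{j}$ and agrees with $\mu_{\bullet}$ at all $w$ with $s_{j}w<w$), together with the order-preservation of $\omega$ on $W$ from Remark~\ref{rem:omega}\,(1), the identity $\omega(h_{j})=h_{\omega(j)}$, and the stability of $\mv$ (equivalently $\ggms_{\MV}$) under $\omega$ from \S\ref{subsec:damv}. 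Both arguments are sound; yours is arguably more elementary and self-contained since it bypasses Lusztig parametrizations and Lemma~\ref{lem:psi}, whereas the paper's version is the natural one given that it has just set up $\psi_{\bi}$ and the transition maps and reuses that machinery repeatedly in \S\ref{subsec:phi-bz} and \S\ref{subsec:fixed-mv}. Your bookkeeping in the key step (the equivalence $s_{\omega(j)}w<w\iff s_{j}\,\omega(w)<\omega(w)$ via $\omega^{2}=\mathrm{id}$ and order-preservation) is exactly right, and your remark that no $\bzero$ case arises for $f_{j}$ on $\mv$ is also correct.
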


\begin{proof}
Fix $j \in I$, and take a reduced word 
$\bi=(i_{1},\,i_{2},\,\dots,\,i_{m})$ for 
$w_{0} \in W$ such that $i_{1}=j$. 
If we write $\psi_{\bi}(P) \in \BZ_{\ge 0}^{m}$ as 
$\psi_{\bi}(P)=(L_{1},\,L_{2},\,\dots,\,L_{m}) \in \BZ_{\ge 0}^{m}$, 
then we know from \cite[Proposition~3.4]{Kam2} that 
$\psi_{\bi}(f_{j}P)=(L_{1}+1,\,L_{2},\,\dots,\,L_{m})$. 
Also, we know from Lemma~\ref{lem:psi} that 
$\psi_{\omega(\bi)}(\omega(P))=
 \psi_{\bi}(P)=(L_{1},\,L_{2},\,\dots,\,L_{m})$.
Because $\omega(\bi)=
(\omega(i_{1}),\,\omega(i_{2}),\,\dots,\,\omega(i_{m}))$ is 
a reduced word for $w_{0} \in W$ such that 
$\omega(i_{1})=\omega(j)$, it follows again from 
\cite[Proposition~3.4]{Kam2} that 
$\psi_{\omega(\bi)}(f_{\omega(j)}\,\omega(P))=
(L_{1}+1,\,L_{2},\,\dots,\,L_{m})$. Therefore, 
we obtain $\psi_{\omega(\bi)}(f_{\omega(j)}\,\omega(P))=
\psi_{\bi}(f_{j}(P))$, which is equal to 
$\psi_{\omega(\bi)}(\omega(f_{j}P))$ again by Lemma~\ref{lem:psi}. 
From this fact, we conclude that $f_{\omega(j)}(\omega(P))=\omega(f_{j}P)$
since $\psi_{\omega(\bi)}:\mv \rightarrow \BZ_{\ge 0}^{m}$ is a bijection. 
This proves the lemma. 
\end{proof}

The following proposition is precisely part (1) of 
Theorem~\ref{thm:fixed-mv}. 
%
%
\begin{prop} \label{prop:stable}
The subset $\mv^{\omega}$ of $\mv$ is stable under the operators 
$f_{j}^{\omega}$ for all $j \in \ha{I}$. 
\end{prop}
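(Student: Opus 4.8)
The plan is to deduce the statement from Lemma~\ref{lem:fo} together with the commutation relations recorded in Remark~\ref{rem:ok}. Fix $P \in \mv^{\omega}$, so that $\omega(P)=P$, and fix $j \in \ha{I}$; it suffices to show that $\omega(f_{j}^{\omega}P)=f_{j}^{\omega}P$. Observe first that each lowering Kashiwara operator $f_{k}$, $k \in I$, maps $\mv$ into $\mv$ (it never produces $\bzero$, since $\mv$ is isomorphic as a crystal to $\CB(\infty)$), so every intermediate polytope occurring when one applies the composition of operators defining $f_{j}^{\omega}$ belongs to $\mv$, and Lemma~\ref{lem:fo} is applicable at each stage.

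I would then treat the three cases of \eqref{eq:ok} separately, in each case pushing $\omega$ past the operators $f_{k}$ by repeated application of Lemma~\ref{lem:fo} (using $\omega^{2}=\mathrm{id}$ on $I$ and $\omega(P)=P$), and then invoking Remark~\ref{rem:ok} to undo the resulting reordering. For $1 \le j \le n-1$, where $f_{j}^{\omega}=f_{j}f_{\omega(j)}$, this gives
\[
\omega\bigl(f_{j}f_{\omega(j)}P\bigr)=f_{\omega(j)}\,\omega\bigl(f_{\omega(j)}P\bigr)=f_{\omega(j)}f_{j}\,\omega(P)=f_{\omega(j)}f_{j}P=f_{j}f_{\omega(j)}P,
\]
the last equality by Remark~\ref{rem:ok}. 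If $\ell=2n-1$ and $j=n$ then $\omega(n)=n$ and $f_{n}^{\omega}=f_{n}$, so $\omega(f_{n}P)=f_{n}\,\omega(P)=f_{n}P$ directly from Lemma~\ref{lem:fo}. If $\ell=2n$ and $j=n$ then $\omega(n)=n+1\ne n$ and $f_{n}^{\omega}=f_{n}f_{\omega(n)}^{2}f_{n}$, and the same bookkeeping yields
\[
\omega\bigl(f_{n}f_{\omega(n)}^{2}f_{n}P\bigr)=f_{\omega(n)}f_{n}^{2}f_{\omega(n)}\,\omega(P)=f_{\omega(n)}f_{n}^{2}f_{\omega(n)}P=f_{n}f_{\omega(n)}^{2}f_{n}P,
\]
again by Remark~\ref{rem:ok}.

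There is no genuine obstacle here: the argument is essentially a bookkeeping exercise in which $\omega$ conjugates the $f_{k}$'s, and the only point needing (mild) attention is that Lemma~\ref{lem:fo} may legitimately be applied to each intermediate object, which, as noted above, is guaranteed because the $f_{k}$ are everywhere defined on $\mv$. Consequently $\omega(f_{j}^{\omega}P)=f_{j}^{\omega}P$ in all cases, i.e. $f_{j}^{\omega}P \in \mv^{\omega}$, which is the assertion of the proposition.
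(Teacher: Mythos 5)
Your argument is correct and is essentially identical to the paper's: both rest on Lemma~\ref{lem:fo} to conjugate $\omega$ past the $f_{k}$'s and on Remark~\ref{rem:ok} to commute the resulting word of Kashiwara operators back to $f_{j}^{\omega}$. The only difference is cosmetic -- the paper writes out only the case $\ell=2n$, $j=n$ (noting the others are simpler), while you spell out all three cases and add the (correct, if not strictly necessary to mention) observation that $f_{k}$ never produces $\bzero$ on $\mv$.
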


\begin{proof}
Let $P \in \mv^{\omega}$ be an MV polytope such that $\omega(P)=P$. 
We prove that $f_{j}^{\omega}P \in \mv^{\omega}$, i.e., 
$\omega(f_{j}^{\omega}P)=f_{j}^{\omega}P$ 
in the case that $\ell=2n$, $n \in \BZ_{\ge 2}$, and $j=n$; 
the proofs for the other cases are simpler. 
Repeated application of Lemma~\ref{lem:fo} shows that 
\begin{align*}
\omega(f_{j}^{\omega}P) 
 & = \omega(f_{n}^{\omega}P) 
   = \omega(f_{n}f_{\omega(n)}^{2}f_{n}P) \\
 & = f_{\omega(n)}f_{n}^{2}f_{\omega(n)}\,\omega(P)
   = f_{\omega(n)}f_{n}^{2}f_{\omega(n)} P, 
\end{align*}
since $\omega(P)=P$ by assumption.
Here we recall from Remark~\ref{rem:ok} that 
as operators on $\mv$, 
$f_{n}f_{\omega(n)}^{2}f_{n}=
 f_{\omega(n)}f_{n}^{2}f_{\omega(n)}$. 
Therefore, we obtain 
\begin{equation*}
\omega(f_{n}^{\omega}P) 
 = f_{\omega(n)}f_{n}^{2}f_{\omega(n)} P 
 = f_{n}f_{\omega(n)}^{2}f_{n} P =f_{n}^{\omega}P.
\end{equation*}
This proves the proposition. 
\end{proof}

The following proposition is a part of 
Theorem~\ref{thm:fixed-mv}\,(3). 
%
%
\begin{prop} \label{prop:com}
We have $\Phi \circ f_{j}^{\omega}= \ha{f}_{j} \circ \Phi$ 
for all $j \in \ha{I}$. 
\end{prop}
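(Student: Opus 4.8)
The plan is to prove $\Phi\circ f_j^{\omega}=\ha f_j\circ\Phi$ on $\mv^{\omega}$ by comparing what each side does to the GGMS data along a suitably chosen reduced word. Fix $j\in\ha I$ and let $P=P(\mu_\bullet)\in\mv^{\omega}$. First I would choose a reduced word $\bj=(j_1,\dots,j_{\ha m})$ for $\ha w_0\in\ha W$ with $j_1=j$, and let $\bi=(i_1,\dots,i_m)$ be the associated canonical reduced word for $w_0\in W$; by construction the initial block of $\bi$ is exactly the sequence realizing $s_j^{\omega}$ (namely $(j)$, or $(j,\omega(j))$, or $(j,\omega(j),j)$ in the three cases of \eqref{eq:sjo}). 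Write $\psi_{\bi}(P)=(L_1,\dots,L_m)$ grouped into blocks $(L_1^{(k)},\dots,L_{N_k}^{(k)})$ as in Proposition~\ref{prop:L}; since $P\in\mv^{\omega}$, Proposition~\ref{prop:L} gives $L_1^{(k)}=\cdots=L_{N_k}^{(k)}=:\ha L_k$, and Corollary~\ref{cor:L} identifies these with the coordinates $\psi_{\bj}(\Phi(P))=(\ha L_1,\dots,\ha L_{\ha m})$ of $\Phi(P)\in\ha\mv$.

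Next I would compute $\psi_{\bi}(f_j^{\omega}P)$. By Kamnitzer's formula \cite[Proposition~3.4]{Kam2}, applying $f_{i_1}$ to an MV polytope whose first letter of $\bi$ is $i_1$ simply increments the first coordinate $L_1$ by $1$; more generally $f_{i_1}^{a}$ increments it by $a$. The subtle point is that $f_j^{\omega}$ is a composition $f_jf_{\omega(j)}$, $f_n$, or $f_nf_{\omega(n)}^2f_n$, and the middle factors are not the first letter of $\bi$. Here I would use the fact (Remark~\ref{rem:ok}, via \cite[Proposition~7.4.1]{Kas}) that $f_jf_{\omega(j)}=f_{\omega(j)}f_j$ and $f_nf_{\omega(n)}^2f_n=f_{\omega(n)}f_n^2f_{\omega(n)}$ on $\mv\cong\CB(\infty)$, together with an appropriate choice of $\bj$ (and hence $\bi$) adapted to each case — e.g. choosing $\bj$ so that the canonical word $\bi$ begins with the block realizing $s_j^{\omega}$ and is such that each of the letters $j,\omega(j)$ occurring in that block can successively be brought to the front by passing to a neighboring reduced word via the transition-map relations of Remark~\ref{rem:trans}. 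Carrying out this bookkeeping shows that $f_j^{\omega}$ increments precisely the whole first block $L_1^{(1)},\dots,L_{N_1}^{(1)}$ each by $1$, i.e.
\begin{equation*}
\psi_{\bi}(f_j^{\omega}P)=\bigl(\underbrace{\ha L_1+1,\dots,\ha L_1+1}_{N_1},\ \ha L_2^{(2)},\dots\bigr),
\end{equation*}
with the remaining blocks unchanged and still constant; this uses Proposition~\ref{prop:stable} to know $f_j^{\omega}P\in\mv^{\omega}$, so that Proposition~\ref{prop:L} applies to it and the constancy of blocks is automatic.

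Finally, Corollary~\ref{cor:L} applied to $f_j^{\omega}P$ translates this into $\psi_{\bj}(\Phi(f_j^{\omega}P))=(\ha L_1+1,\ha L_2,\dots,\ha L_{\ha m})$. On the other side, since $j_1=j$ is the first letter of $\bj$, Kamnitzer's formula for $\ha\mv$ gives $\psi_{\bj}(\ha f_j\Phi(P))=(\ha L_1+1,\ha L_2,\dots,\ha L_{\ha m})$ as well. Because $\psi_{\bj}:\ha\mv\to\BZ_{\ge0}^{\ha m}$ is a bijection, we conclude $\Phi(f_j^{\omega}P)=\ha f_j(\Phi(P))$, as desired. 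The main obstacle is the middle step: verifying that the composite $f_j^{\omega}$ acts on Lusztig data exactly by incrementing the first canonical block, which requires carefully moving each factor to the front through the braid/transition relations and keeping track of the tropicalized transition formulas of Remark~\ref{rem:trans} — in the $j=n$, $B_n$ case this is the $f_nf_{\omega(n)}^2f_n$ computation and is the most delicate part.
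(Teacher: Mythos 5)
Your approach is correct in outline but genuinely different from the paper's, and substantially more computational. The paper proves the proposition directly from the GGMS-datum characterization of the lowering Kashiwara operator: $\ha{f}_{j}\Phi(P)$ is characterized as the unique MV polytope with $\ha{\mu}'_{\ha{e}} = \ha{\mu}_{\ha{e}} - h_{j}^{\omega}$ and $\ha{\mu}'_{\ha{w}} = \ha{\mu}_{\ha{w}}$ for all $\ha{w}$ with $\ha{s}_{j}\ha{w}<\ha{w}$, so it suffices to verify these two conditions for $\Phi(f_{j}^{\omega}P)$. The first is immediate because each factor $f_{j}$ or $f_{\omega(j)}$ subtracts $h_{j}$ or $h_{\omega(j)}$ from $\mu_{e}$, and the second follows at once from Remark~\ref{rem:theta}\,(3): $\ha{s}_{j}\ha{w}<\ha{w}$ forces both $s_{j}w<w$ and $s_{\omega(j)}w<w$ for $w=\Theta(\ha{w})$, so each intermediate Kashiwara operator in the composition defining $f_{j}^{\omega}$ fixes $\mu_{w}$. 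That argument involves no reduced words, no Lusztig data, and no transition maps; it is essentially definitional.

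Your route via $\psi_{\bi}$ and the transition maps of Remark~\ref{rem:trans} does work: starting from the canonical word $\bi$ whose first block is $(j)$, $(j,\omega(j))$, or $(j,\omega(j),j)$, you can apply Kamnitzer's first-letter increment rule, braid-transpose within the first block to bring the next required letter to the front, and iterate. In the $\ell=2n$, $j=n$ case with initial block coordinates $(\ha{L}_{1},\ha{L}_{1},\ha{L}_{1})$, the bookkeeping $(\ha{L}_1+1,\ha{L}_1,\ha{L}_1)\to(\ha{L}_1,\ha{L}_1,\ha{L}_1+1)\to(\ha{L}_1+2,\ha{L}_1,\ha{L}_1+1)\to(\ha{L}_1,\ha{L}_1+1,\ha{L}_1+1)\to(\ha{L}_1+1,\ha{L}_1+1,\ha{L}_1+1)$ does land on the uniform increment you want. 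However, this is exactly the step you label ``the main obstacle'' and leave unexecuted, so as written the proposal is a correct plan rather than a complete proof; you should carry out the three cases explicitly. Also note that you do not actually need the commutation relations $f_{j}f_{\omega(j)}=f_{\omega(j)}f_{j}$ etc.\ from Remark~\ref{rem:ok} for this route — the block-by-block transition computations already handle the ordering — so invoking them adds nothing. In short: your method buys an explicit picture on Lusztig data at the cost of braid-move arithmetic, while the paper's method is shorter and conceptually cleaner because it never leaves the GGMS side.
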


\begin{proof}
Let $P \in \mv^{\omega}$, and let 
$\ha{\mu}_{\bullet}=
 (\ha{\mu}_{\ha{w}})_{\ha{w} \in \ha{W}} \in \ha{\ggms}_{\MV}$ and 
$\ha{\mu}_{\bullet}''=
 (\ha{\mu}_{\ha{w}}'')_{\ha{w} \in \ha{W}} \in \ha{\ggms}_{\MV}$ 
be the GGMS data of $\Phi(P) \in \ha{\mv}$ and 
$\Phi(f_{j}^{\omega}P) \in \ha{\mv}$, respectively. 
Recall that $\ha{f}_{j}\Phi(P) \in \ha{\mv}$ is defined to be 
the unique MV polytope in $\ha{\mv}$ with GGMS datum 
$\ha{\mu}_{\bullet}'=(\ha{\mu}_{\ha{w}}')_{\ha{w} \in \ha{W}} \in \ha{\ggms}_{\MV}$ 
such that $\ha{\mu}_{\ha{e}}'=\ha{\mu}_{\ha{e}}-h_{j}^{\omega}$, and 
$\ha{\mu}_{\ha{w}}'=\ha{\mu}_{\ha{w}}$ for all 
$\ha{w} \in \ha{W}$ with $\ha{s}_{j}\ha{w} < \ha{w}$. 
Hence, in order to prove that 
$\ha{f}_{j}\Phi(P)=\Phi(f_{j}^{\omega}P)$, 
it suffices to show that 
%
%
\begin{equation} \label{eq:con01}
\ha{\mu}_{\ha{e}}''=\ha{\mu}_{\ha{e}}-h_{j}^{\omega},
\quad \text{and} \quad
\ha{\mu}_{\ha{w}}''=\ha{\mu}_{\ha{w}}
\quad \text{for all 
$\ha{w} \in \ha{W}$ with $\ha{s}_{j}\ha{w} < \ha{w}$}.
\end{equation}

Let $\mu_{\bullet}=(\mu_{w})_{w \in W} \in \ggms_{\MV}^{\omega}$ and 
$\mu_{\bullet}''=(\mu_{w}'')_{w \in W} \in \ggms_{\MV}^{\omega}$ 
be the GGMS data of $P \in \mv^{\omega}$ and 
$f_{j}^{\omega}P \in \mv^{\omega}$, respectively. 
Note that $\ha{\mu}_{\ha{w}}=\mu_{\Theta(\ha{w})}$ and 
$\ha{\mu}_{\ha{w}}''=\mu_{\Theta(\ha{w})}''$ for 
$\ha{w} \in \ha{W}$ by the definition of the map $\Phi$.
Also, we deduce from the definitions of 
the lowering Kashiwara operators $f_{j}$, $j \in I$, 
and the operators $f_{j}^{\omega}$, $j \in \ha{I}$, that 
$\mu_{e}''=\mu_{e}-h_{j}^{\omega}$. 
Hence we obtain 
$\ha{\mu}_{\ha{e}}''=\mu_{e}''=
\mu_{e}-h^{\omega}_{j} = 
\ha{\mu}_{\ha{e}}-h^{\omega}_{j}$.
Next, let $\ha{w} \in \ha{W}$ be such that $\ha{s}_{j}\ha{w} < \ha{w}$, 
and set $w:=\Theta(\ha{w})$. Then it follows from 
Remark~\ref{rem:theta} that $s_{j}w < w$ and $s_{\omega(j)}w < w$.
Therefore, we deduce again from the definitions of 
the lowering Kashiwara operators $f_{j}$, $j \in I$, 
and the operators $f_{j}^{\omega}$, $j \in \ha{I}$, that 
$\mu_{w}''=\mu_{w}$, and hence 
$\ha{\mu}_{\ha{w}}''=\mu_{w}''=\mu_{w}=\ha{\mu}_{\ha{w}}$.
This proves \eqref{eq:con01}, 
which completes the proof of the proposition.
\end{proof}

Now we prove the remaining parts of 
Theorem~\ref{thm:fixed-mv}. Recall from Remark~\ref{rem:binf} 
the MV polytope $P^{0}=P(\mu_{\bullet}^{0}) \in \mv$ 
corresponding to $u_{\infty} \in \CB(\infty)$ under the 
isomorphism $\Psi:\mv \rightarrow \CB(\infty)$ of 
crystals for $\Fg^{\vee}$. It is obvious that 
$\omega(P^{0})=P^{0}$, i.e., $P^{0} \in \mv^{\omega}$. 
Also, it follows from the definition of the map 
$\Phi:\mv^{\omega} \rightarrow \ha{\mv}$ that 
$\Phi(P^{0})=\ha{P}^{0}$, where $\ha{P}^{0} \in \ha{\mv}$ is 
the MV polytope corresponding to 
$\ha{u}_{\infty} \in \ha{\CB}(\infty)$ under the 
isomorphism $\ha{\Psi}:\ha{\mv} \rightarrow 
\ha{\CB}(\infty)$ of crystals for $(\Fg^{\omega})^{\vee}$. 
Moreover, because $\ha{\mv}$ is isomorphic to $\ha{\CB}(\infty)$, 
each element $\ha{P} \in \ha{\mv}$ is of the form 
$\ha{P}=\ha{f}_{j_{1}}\ha{f}_{j_{2}} \cdots \ha{f}_{j_{k}} \ha{P}^{0}$
for some $j_{1},\,j_{2},\,\dots,\,j_{k} \in \ha{I}$. 

Let $P \in \mv^{\omega}$, and set $\ha{P}:=\Phi(P) \in \ha{\mv}$. 
Then, as above, 
there exist $j_{1},\,j_{2},\,\dots,\,j_{k} \in \ha{I}$
such that $\ha{P}=\ha{f}_{j_{1}}\ha{f}_{j_{2}} \cdots 
\ha{f}_{j_{k}} \ha{P}^{0}$. Here we note that 
$f_{j_{1}}^{\omega} f_{j_{2}}^{\omega} \cdots 
 f_{j_{k}}^{\omega} P^{0} \in \mv^{\omega}$ 
by Proposition~\ref{prop:stable}.
Therefore, by using Proposition~\ref{prop:com}, 
we obtain 
\begin{equation*}
\Phi(f_{j_{1}}^{\omega} f_{j_{2}}^{\omega} \cdots 
f_{j_{k}}^{\omega} P^{0}) = 
\ha{f}_{j_{1}}\ha{f}_{j_{2}} \cdots 
\ha{f}_{j_{k}} \Phi(P^{0})= 
\ha{f}_{j_{1}}\ha{f}_{j_{2}} \cdots 
\ha{f}_{j_{k}} \ha{P}^{0} = 
\ha{P}=\Phi(P), 
\end{equation*}
which implies that 
$P=f_{j_{1}}^{\omega} f_{j_{2}}^{\omega} \cdots 
f_{j_{k}}^{\omega} P^{0}$ by Proposition~\ref{prop:bij}. 
This proves part (2) of Theorem~\ref{thm:fixed-mv}, 
and hence the uniqueness assertion in 
Theorem~\ref{thm:fixed-mv}\,(3). 
Thus, we have completed the proof of 
Theorem~\ref{thm:fixed-mv}. 

%
\section{Descriptions of the lowering Kashiwara operators $\ha{f}_{j}$.}
\label{sec:hk}

We maintain the assumption that 
$\Fg$ is either of type $A_{\ell}$ with $\ell=2n-1$, 
$n \in \BZ_{\ge 2}$, or of type $A_{\ell}$ with $\ell=2n$, 
$n \in \BZ_{\ge 2}$. 

%
\subsection{Description of $\ha{f}_{j}$ in terms of BZ data.}
\label{subsec:comb}
First, let us recall from \cite[\S5.2]{Kam2} 
the description of the lowering Kashiwara operators 
$f_{j}$, $j \in I$, on $\mv$ in terms of BZ data. 
Note that
%
%
\begin{equation} \label{eq:minu}
\pair{\gamma}{h_{j}} \in \bigl\{-1,\,0,\,1\bigr\} 
\qquad
\text{for all $\gamma \in \Gamma$ and $j \in I$},
\end{equation}
since every fundamental weight for $\Fg$ (of type $A_{\ell}$) 
is minuscule. The next theorem follows 
immediately from \cite[\S5.1]{Kam2} and \eqref{eq:minu}. 
%
%
\begin{thm} \label{thm:am}
Let $P \in \mv$, and $j \in I$. 
Let 
$M_{\bullet}=(M_{\gamma})_{\gamma \in \Gamma} \in \edge_{\MV}$ and 
$M_{\bullet}'=(M_{\gamma}')_{\gamma \in \Gamma} \in \edge_{\MV}$ 
be the BZ data of $P$ and $f_{j}P$, respectively.
Then, for each $\gamma \in \Gamma$, 
\begin{equation*}
M_{\gamma}'=
\begin{cases}
\min \bigl( M_{\gamma}, M_{s_{j} \cdot \gamma}+c_{j}^{M_{\bullet}} \bigr)
   & \text{\rm if $\pair{\gamma}{h_{j}}=1$}, \\[3mm]
M_{\gamma} 
   & \text{\rm otherwise},
\end{cases}
\end{equation*}
where $c_{j}^{M_{\bullet}}:=
M_{\Lambda_{j}}-M_{s_{j}\Lambda_{j}}-1$. 
\end{thm}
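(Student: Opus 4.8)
The plan is to obtain this as a direct specialization of Kamnitzer's Anderson--Mirkovi\'c formula for the action of $f_{j}$ on Berenstein--Zelevinsky data, recorded in \cite[\S5.1]{Kam2}. That formula applies here to the genuine lowering Kashiwara operator $f_{j}$, and not merely to an abstractly defined operator, precisely because $\Fg$ is of type $A$, the case in which Kamnitzer proved the AM conjecture. So the argument has two steps: recall the general formula, then collapse it by means of \eqref{eq:minu}.

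For the recollection: for $P\in\mv$ with BZ datum $M_{\bullet}=(M_{\gamma})_{\gamma\in\Gamma}$ and $j\in I$, \cite[\S5.1]{Kam2} describes the BZ datum $M_{\bullet}'=(M_{\gamma}')_{\gamma\in\Gamma}$ of $f_{j}P$ by $M_{\gamma}'=M_{\gamma}$ whenever $\pair{\gamma}{h_{j}}\le 0$, and by
\begin{equation*}
M_{\gamma}'=\min_{0\le p\le\pair{\gamma}{h_{j}}}
 \bigl(M_{s_{j}^{p}\cdot\gamma}+p\,c_{j}^{M_{\bullet}}\bigr),
\qquad
c_{j}^{M_{\bullet}}:=M_{\Lambda_{j}}-M_{s_{j}\Lambda_{j}}-1,
\end{equation*}
whenever $\pair{\gamma}{h_{j}}\ge 1$, with $s_{j}^{0}=e$ and $s_{j}^{1}=s_{j}$. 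Note that nothing needs to be checked about $M_{\bullet}'$ being itself a BZ datum: $f_{j}P\in\mv$ is already known (see \cite[Theorem~3.5]{Kam2}), and the theorem merely identifies its BZ datum.

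For the collapse: by \eqref{eq:minu}, every fundamental weight of $\Fg$ (of type $A_{\ell}$) being minuscule, one has $\pair{\gamma}{h_{j}}\in\{-1,\,0,\,1\}$ for all $\gamma\in\Gamma$ and $j\in I$; in particular $\pair{\gamma}{h_{j}}\ge 2$ never occurs. If $\pair{\gamma}{h_{j}}\in\{-1,\,0\}$, then $M_{\gamma}'=M_{\gamma}$, the ``otherwise'' branch of the assertion. If $\pair{\gamma}{h_{j}}=1$, the range of the displayed minimum is $\{0,\,1\}$: the term $p=0$ equals $M_{\gamma}$ and the term $p=1$ equals $M_{s_{j}\cdot\gamma}+c_{j}^{M_{\bullet}}$, whence $M_{\gamma}'=\min\bigl(M_{\gamma},\,M_{s_{j}\cdot\gamma}+c_{j}^{M_{\bullet}}\bigr)$, exactly as claimed.

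There is no serious obstacle; the whole proof is the reduction just described, and the paper's ``follows immediately'' is accurate. The only points needing a moment's care are bookkeeping ones: that the normalization of the shift in \cite[\S5.1]{Kam2} is indeed $M_{\Lambda_{j}}-M_{s_{j}\Lambda_{j}}-1$ (the $-1$ included), that the pairing appearing there is the canonical pairing $\pair{\cdot}{\cdot}$ used throughout, and that $\pair{s_{j}\Lambda_{j}}{h_{j}}=-1$, so that when $\pair{\gamma}{h_{j}}=1$ the elements $\gamma$ and $s_{j}\cdot\gamma$ of $\Gamma$ are distinct and the displayed minimum is unambiguous.
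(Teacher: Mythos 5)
Your argument is correct and is essentially the same as what the paper intends by ``follows immediately from \cite[\S5.1]{Kam2} and \eqref{eq:minu}'': quote Kamnitzer's BZ-datum description of $f_{j}$ (valid as a theorem in type $A$, where he proved the AM conjecture), and observe via \eqref{eq:minu} that the positive-pairing case always means $\pair{\gamma}{h_{j}}=1$. The only small caveat is that the general formula you ``recall'' — a minimum indexed by $0\le p\le\pair{\gamma}{h_{j}}$ with terms $M_{s_{j}^{p}\cdot\gamma}+p\,c_{j}^{M_{\bullet}}$ — is not quite how Kamnitzer states the AM conjecture (which is a two-term minimum $\min(M_{\gamma},\,M_{s_{j}\cdot\gamma}+c_{j}^{M_{\bullet}})$ for $\pair{\gamma}{h_{j}}>0$, the simplicity being exactly what fails in $C_{3}$); but since in type $A$ the two expressions coincide, this imprecision does not affect your conclusion.
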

The following corollary will be needed 
in the next subsection. 
%
%
\begin{cor} \label{cor:am}
Keep the notation of Theorem~\ref{thm:am}. 
Then we have $f_{j}P \supset P$. 
\end{cor}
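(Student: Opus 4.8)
Corollary \ref{cor:am} asserts that the MV polytope $f_jP$ contains $P$ (as subsets of $\Fh_\BR$). The plan is to read this off directly from the formula in Theorem~\ref{thm:am}. Recall that by \eqref{eq:po01} we have $P = P(M_\bullet) = \{h \in \Fh_\BR \mid \pair{\gamma}{h} \ge M_\gamma \text{ for all } \gamma \in \Gamma\}$ and likewise $f_jP = P(M_\bullet')$. Since the defining half-space inequalities become weaker as the constants $M_\gamma$ decrease, it suffices to check the pointwise inequality $M_\gamma' \le M_\gamma$ for every $\gamma \in \Gamma$.

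First I would dispose of the easy case: when $\pair{\gamma}{h_j} \ne 1$, Theorem~\ref{thm:am} gives $M_\gamma' = M_\gamma$, so there is nothing to prove. The only remaining case is $\pair{\gamma}{h_j} = 1$, where $M_\gamma' = \min\bigl(M_\gamma,\, M_{s_j\cdot\gamma} + c_j^{M_\bullet}\bigr)$. But a minimum of $M_\gamma$ with anything is automatically $\le M_\gamma$, so $M_\gamma' \le M_\gamma$ holds trivially. Hence $M_\gamma' \le M_\gamma$ for all $\gamma \in \Gamma$, which yields the inclusion of the defining half-spaces and therefore $P(M_\bullet) \subset P(M_\bullet')$, i.e.\ $P \subset f_jP$, as desired.

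There is essentially no obstacle here: the corollary is an immediate formal consequence of the ``$\min$'' appearing in the Anderson--Mirkovi\'c-type formula of Theorem~\ref{thm:am}, combined with the fact that the pseudo-Weyl polytope $P(M_\bullet)$ is defined by the inequalities $\pair{\gamma}{h} \ge M_\gamma$, which only relax when the $M_\gamma$ are lowered. The one point worth stating carefully is the monotonicity of $M_\bullet \mapsto P(M_\bullet)$ with respect to the pointwise order, namely: if $M_\gamma' \le M_\gamma$ for all $\gamma$, then $\{h \mid \pair{\gamma}{h} \ge M_\gamma \ \forall \gamma\} \subseteq \{h \mid \pair{\gamma}{h} \ge M_\gamma' \ \forall \gamma\}$; this is clear since $\pair{\gamma}{h} \ge M_\gamma \ge M_\gamma'$. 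So the proof is just: reduce to the two cases of the formula, observe $M_\gamma' \le M_\gamma$ in both, and invoke this monotonicity.
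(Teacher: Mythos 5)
Your proof is correct and follows exactly the same route as the paper: express $P$ and $f_jP$ as intersections of half-spaces via \eqref{eq:po01}, observe from the two cases of Theorem~\ref{thm:am} that $M_\gamma' \le M_\gamma$ for all $\gamma\in\Gamma$, and conclude by monotonicity of $M_\bullet\mapsto P(M_\bullet)$. No substantive differences.
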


\begin{proof}
It follows from \eqref{eq:po01} that 
\begin{align*}
P & =
 \bigl\{ 
 h \in \Fh_{\BR} \mid 
 \gamma(h) \ge M_{\gamma} \ 
 \text{for all $\gamma \in \Gamma$}
 \bigr\}, \\
f_{j}P & =
 \bigl\{ 
 h \in \Fh_{\BR} \mid 
 \gamma(h) \ge M_{\gamma}' \ 
 \text{for all $\gamma \in \Gamma$}
 \bigr\}.
\end{align*}
Also, we see from Theorem~\ref{thm:am} that 
$M_{\gamma} \ge M_{\gamma}'$ for all $\gamma \in \Gamma$. 
Hence we obtain $f_{j}P \supset P$, as desired. 
\end{proof}

By Theorem~\ref{thm:fixed-mv}, 
giving a description of $\ha{f}_{j}$, $j \in \ha{I}$, on $\ha{\mv}$ 
is equivalent to giving a description of 
$f_{j}^{\omega}$, $j \in \ha{I}$, on $\mv^{\omega}$.
Applying Theorem~\ref{thm:am} successively, we can prove 
the following proposition. 
%
%
\begin{prop} \label{prop:ok}
Let $P \in \mv^{\omega}$ and $j \in \ha{I}$.
Let $M_{\bullet}=
     (M_{\gamma})_{\gamma \in \Gamma} \in \edge_{\MV}^{\omega}$ 
and $M_{\bullet}'=
     (M_{\gamma}')_{\gamma \in \Gamma} \in \edge_{\MV}^{\omega}$
be the BZ data of $P$ and $f_{j}^{\omega}P$, 
respectively. 

\svsp

\noindent
{\rm (1)} If $1 \le j \le n-1$, then for $\gamma \in \Gamma$, 
we have
%
%
\begin{equation} \label{eq:ok1}
M_{\gamma}' = 
\begin{cases}
 M_{\gamma} & 
 \text{\rm if $\pair{\gamma}{h_{j}} \le 0$ and 
       $\pair{\gamma}{h_{\omega(j)}} \le 0$}, \\[3mm]
 \min \bigl(
   M_{\gamma},\ M_{s_{\omega(j)} \cdot \gamma}+c_{j}^{M_{\bullet}}
   \bigr)
   & \text{\rm if $\pair{\gamma}{h_{j}} \le 0$ and 
           $\pair{\gamma}{h_{\omega(j)}}=1$}, \\[3mm]
 \min \bigl(
   M_{\gamma},\ M_{s_{j} \cdot \gamma}+c_{j}^{M_{\bullet}}
   \bigr)
   & \text{\rm if $\pair{\gamma}{h_{j}}=1$ and 
           $\pair{\gamma}{h_{\omega(j)}} \le 0$}, \\[5mm]
 \min \left(
   \begin{array}{l}
   M_{\gamma},\ 
   M_{s_{j} \cdot \gamma}+c_{j}^{M_{\bullet}}, \\[1.5mm]
   M_{s_{\omega(j)} \cdot \gamma}+c_{j}^{M_{\bullet}}, \\[1.5mm]
   M_{s_{j}s_{\omega(j)} \cdot \gamma}+2c_{j}^{M_{\bullet}}
   \end{array}
   \right)
   & \text{\rm if 
     $\pair{\gamma}{h_{j}}=\pair{\gamma}{h_{\omega(j)}}=1$}. 
\end{cases}
\end{equation}

\svsp

\noindent
{\rm (2)} If $\ell=2n-1$, $n \in \BZ_{\ge 2}$, and 
$j=n$, then for $\gamma \in \Gamma$, 
we have
%
%
\begin{equation} \label{eq:ok2}
M_{\gamma}'=
\begin{cases}
 \min \bigl(
   M_{\gamma},\ 
   M_{s_{n} \cdot\gamma}+c_{n}^{M_{\bullet}}
   \bigr)
   & \text{\rm if $\pair{\gamma}{h_{n}}=1$}, \\[3mm]
 M_{\gamma} & \text{\rm otherwise}.
\end{cases}
\end{equation}

\svsp

\noindent
{\rm (3)} If $\ell=2n$, $n \in \BZ_{\ge 2}$, and 
$j=n$, then for $\gamma \in \Gamma$, we have
%
%
\begin{equation} \label{eq:ok3}
M_{\gamma}'=
\begin{cases}
 \min \left( \begin{array}{l}
      M_{\gamma},\, 
      M_{s_{\omega(n)} \cdot \gamma}+c_{n}^{M_{\bullet}}, \\[1.5mm]
      M_{s_{n}s_{\omega(n)} \cdot \gamma}+2c_{n}^{M_{\bullet}}
      \end{array} \right)
   & \text{\rm if $\pair{\gamma}{h_{n}}=0$ 
      and $\pair{\gamma}{h_{\omega(n)}}=1$}, \\[7mm]
 \min \bigl(M_{\gamma}, \, 
            M_{s_{n} \cdot \gamma}+c_{n}^{M_{\bullet}} \bigr) 
   & \text{\rm if $\pair{\gamma}{h_{n}}=1$ 
      and $\pair{\gamma}{h_{\omega(n)}}=-1$}, \\[3mm]
 \min \left( \begin{array}{l}
            M_{\gamma}, \, 
            M_{s_{n} \cdot \gamma}+c_{n}^{M_{\bullet}}, \\[1.5mm]
            M_{s_{\omega(n)}s_{n} \cdot \gamma}+2c_{n}^{M_{\bullet}}
       \end{array} \right)
   & \text{\rm if $\pair{\gamma}{h_{n}}=1$ 
      and $\pair{\gamma}{h_{\omega(n)}}=0$}, \\[7mm]
 \min \bigl(M_{\gamma},\, 
            M_{s_{\omega(n)} \cdot \gamma}+c_{n}^{M_{\bullet}}\bigr)
   & \text{\rm if $\pair{\gamma}{h_{n}}=-1$ 
      and $\pair{\gamma}{h_{\omega(n)}}=1$}, \\[3mm]
 M_{\gamma}
   & \text{\rm otherwise}.
\end{cases}
\end{equation}
\end{prop}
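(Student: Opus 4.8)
The plan is to use Theorem~\ref{thm:fixed-mv} to replace $\ha{f}_{j}$ by $f_{j}^{\omega}$ on $\mv^{\omega}$, and then to compute the BZ datum of $f_{j}^{\omega}P$ by applying Theorem~\ref{thm:am} successively, once for each factor in the definition \eqref{eq:ok} of $f_{j}^{\omega}$. Before beginning I would record the three facts that keep the bookkeeping under control: (i) minusculeness, i.e. $\pair{\gamma}{h_{i}}\in\{-1,0,1\}$ for all $\gamma\in\Gamma$ and $i\in I$; (ii) for $P\in\mv^{\omega}$ the BZ datum satisfies $M_{\omega(\gamma)}=M_{\gamma}$ (Lemma~\ref{lem:bzmvda}\,(2)), which in particular gives $c_{\omega(i)}^{M_{\bullet}}=c_{i}^{M_{\bullet}}$ since $\Lambda_{\omega(i)}=\omega(\Lambda_{i})$ and $s_{\omega(i)}\Lambda_{\omega(i)}=\omega(s_{i}\Lambda_{i})$; and (iii) the Cartan data $a_{j,\omega(j)}=a_{\omega(j),j}=0$ for $1\le j\le n-1$ (so $s_{j}$ and $s_{\omega(j)}$ commute and each fixes the other's simple coroot), while $a_{n,\omega(n)}=a_{\omega(n),n}=-1$ in the case $\ell=2n$. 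Corollary~\ref{cor:am} ($M_{\gamma}\ge M_{\gamma}'$ componentwise) will also be kept at hand to discard redundant terms inside the nested minima.

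Case (2) is immediate: since $f_{n}^{\omega}=f_{n}$ there, formula \eqref{eq:ok2} is exactly Theorem~\ref{thm:am} applied with $j=n$. For case (1) I write $f_{j}^{\omega}=f_{j}f_{\omega(j)}$ and apply Theorem~\ref{thm:am} twice, first to $f_{\omega(j)}P$ with datum $M^{(1)}:=D(f_{\omega(j)}P)$, then to $f_{j}(f_{\omega(j)}P)$. The preliminary point is the stability of the shift constant: $c_{\omega(j)}^{M_{\bullet}}=c_{j}^{M_{\bullet}}=:c$ by (ii), and $c_{j}^{M^{(1)}}=c$ because $\pair{\Lambda_{j}}{h_{\omega(j)}}=0=\pair{s_{j}\Lambda_{j}}{h_{\omega(j)}}$ by (iii), so the $f_{\omega(j)}$-step leaves $M_{\Lambda_{j}}$ and $M_{s_{j}\Lambda_{j}}$ untouched. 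After that it is a $2\times2$ case analysis on the pair $(\pair{\gamma}{h_{j}},\pair{\gamma}{h_{\omega(j)}})$, split into ``$\le 0$'' versus ``$=1$''; one uses $s_{j}h_{\omega(j)}=h_{\omega(j)}$ (hence $\pair{s_{j}\cdot\gamma}{h_{\omega(j)}}=\pair{\gamma}{h_{\omega(j)}}$) and $s_{j}s_{\omega(j)}=s_{\omega(j)}s_{j}$ to collapse the composed formula into the four displayed expressions, the ``both $=1$'' branch being where the double shift $2c_{j}^{M_{\bullet}}$ and the term $M_{s_{j}s_{\omega(j)}\cdot\gamma}$ enter.

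Case (3) is the main obstacle. Here $f_{n}^{\omega}=f_{n}f_{\omega(n)}^{2}f_{n}$, so \emph{four} successive applications of Theorem~\ref{thm:am} are required along the chain $P\to f_{n}P\to f_{\omega(n)}f_{n}P\to f_{\omega(n)}^{2}f_{n}P\to f_{n}^{\omega}P$, and because $a_{n,\omega(n)}=-1$ the reflections $s_{n},s_{\omega(n)}$ no longer commute and, a priori, the shift constant fed into each step could drift. The crux is to prove that this constant stays equal to $c_{n}^{M_{\bullet}}=:c$ at all four stages. I would establish this either by a direct estimate, showing that $M_{\Lambda_{n}},M_{s_{n}\Lambda_{n}}$ (and their $\omega$-images) are not lowered by the preceding steps, which reduces to the edge inequalities \eqref{eq:edge} combined with $M_{\omega(\gamma)}=M_{\gamma}$; or, more conceptually, via the identity $c_{n}^{M_{\bullet}}=\vp_{n}(P)-1$ together with the invariance of $\vp_{n}$ and $\vp_{\omega(n)}$ under the relevant operators, which is a crystal-theoretic statement.

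Once all four constants are pinned to $c$, the remaining work is routine: expand the four-fold minimum and collapse it using $\pair{\gamma}{h_{n}},\pair{\gamma}{h_{\omega(n)}}\in\{-1,0,1\}$ (which annihilates most potential terms) and the coincidence $s_{n}s_{\omega(n)}s_{n}=s_{\omega(n)}s_{n}s_{\omega(n)}$ on the relevant $\gamma$'s; the symmetry $f_{n}f_{\omega(n)}^{2}f_{n}=f_{\omega(n)}f_{n}^{2}f_{\omega(n)}$ from Remark~\ref{rem:ok} can be used to cut the number of sign-patterns to be checked by hand roughly in half. The output is precisely the five-case formula \eqref{eq:ok3}. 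I expect the only genuinely delicate point to be the constancy of the shift in case (3); everything else is a finite, if somewhat tedious, verification.
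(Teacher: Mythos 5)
Your plan for parts~(1) and~(2) matches the paper's sketch: decompose $f_{j}^{\omega}$ into its factors, apply Theorem~\ref{thm:am} step by step, and control the shift constants via $M_{\omega(\gamma)}=M_{\gamma}$ and the commutation $s_{j}s_{\omega(j)}=s_{\omega(j)}s_{j}$ with $a_{j,\omega(j)}=0$. That part is sound and is essentially what the authors do.

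In part~(3), however, your central assumption --- ``the shift constant stays equal to $c_{n}^{M_{\bullet}}=:c$ at all four stages'' --- is \emph{false}, and a direct estimate along the lines you describe would prove something incorrect. The first application of $f_{n}$ in fact lowers $M_{s_{\omega(n)}\Lambda_{\omega(n)}}$ by exactly $1$: because $\pair{s_{\omega(n)}\Lambda_{\omega(n)}}{h_{n}}=-a_{\omega(n),n}=1$, Theorem~\ref{thm:am} gives
\begin{equation*}
M^{(1)}_{s_{\omega(n)}\Lambda_{\omega(n)}}
=\min\bigl(M_{s_{\omega(n)}\Lambda_{\omega(n)}},\
  M_{s_{n}s_{\omega(n)}\Lambda_{\omega(n)}}+c\bigr),
\end{equation*}
and using the $\omega$-invariance $M_{\omega(\gamma)}=M_{\gamma}$ together with the tropical Pl\"ucker identity recorded in Lemma~\ref{lem:tp} (namely $2M_{s_{n}\Lambda_{n}}=M_{\Lambda_{n}}+M_{s_{\omega(n)}s_{n}\Lambda_{n}}$), the second argument of the $\min$ equals $M_{s_{n}\Lambda_{n}}-1$, which is strictly smaller. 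Hence $M^{(1)}_{s_{\omega(n)}\Lambda_{\omega(n)}}=M_{s_{n}\Lambda_{n}}-1$ and therefore $c_{\omega(n)}^{M_{\bullet}^{(1)}}=c+1$, not $c$. The paper's actual bookkeeping is $c,\ c+1,\ c,\ c$ for the four successive steps of $f_{n}f_{\omega(n)}^{2}f_{n}$; the drop back down to $c$ at the third step again relies on Lemma~\ref{lem:tp}, not merely on the edge inequalities. So the missing ingredient in your argument is the tropical Pl\"ucker relation in the $\omega$-symmetric form of Lemma~\ref{lem:tp}; without it you cannot pin down the intermediate shift constants, and with a naive ``constancy'' assumption you would derive a formula that disagrees with \eqref{eq:ok3}. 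Your fallback idea of invoking $\vp_{n}$-invariance runs into the same wall: the relevant quantity manifestly changes under $f_{n}$, as the computation above shows.

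Once the constants $c,\,c+1,\,c,\,c$ are in hand, the rest of your outline (minusculeness to bound the sign patterns, the braid relation $s_{n}s_{\omega(n)}s_{n}=s_{\omega(n)}s_{n}s_{\omega(n)}$, Corollary~\ref{cor:am} and Remark~\ref{rem:ok} to prune cases) is a reasonable way to collapse the four-fold minimum to \eqref{eq:ok3}, and agrees in spirit with the paper.
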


\begin{proof}
Since the proofs of these formulas are rather straightforward, 
we only sketch them, leaving the details to the reader. 
In the proof of part~(1), we need the equations 
$c_{\omega(j)}^{M_{\bullet}}=c_{j}^{M_{\bullet}}$ and 
$c_{j}^{M_{\bullet}''}=c_{j}^{M_{\bullet}}$, 
where $M_{\bullet}'' \in \edge_{\MV}$ is the BZ datum 
of $f_{\omega(j)}P$; 
recall that $f_{j}^{\omega}=f_{j}f_{\omega(j)}$. 
The first one 
$c_{\omega(j)}^{M_{\bullet}}=c_{j}^{M_{\bullet}}$ 
follows immediately from Lemma~\ref{lem:bzmvda}\,(2), 
and the second one 
$c_{j}^{M_{\bullet}''}=c_{j}^{M_{\bullet}}$ 
is easily shown by using 
Theorem~\ref{thm:am}, along with 
Remark~\ref{rem:omega}\,(4) and 
Lemma~\ref{lem:bzmvda}\,(2). 
Also, in the proof of part~(3), 
we need the following equations: 
$c_{\omega(n)}^{M_{\bullet}^{(1)}}=c_{n}^{M_{\bullet}}+1$, \, 
$c_{\omega(n)}^{M_{\bullet}^{(2)}}=c_{n}^{M_{\bullet}}$, and 
$c_{n}^{M_{\bullet}^{(3)}}=c_{n}^{M_{\bullet}}$,
where $M_{\bullet}^{(1)} \in \edge_{\MV}$ 
(resp., $M_{\bullet}^{(2)},\,M_{\bullet}^{(3)} \in \edge_{\MV}$) 
is the BZ datum of $f_{n}P$ 
(resp., $f_{\omega(n)}f_{n}P$, $f_{\omega(n)}^{2}f_{n}P$); 
recall that $f_{n}^{\omega}=f_{n}f_{\omega(n)}^{2}f_{n}$. 
These equations are easily shown by using 
Theorem~\ref{thm:am}, along with Remark~\ref{rem:omega}\,(4), 
Lemma~\ref{lem:bzmvda}\,(2), and Lemma~\ref{lem:tp} below.
\end{proof}
%
%
\begin{lem} \label{lem:tp}
Assume that $\ell=2n$, $n \in \BZ_{\ge 2}$. 
Let $M_{\bullet}=(M_{\gamma})_{\gamma \in \Gamma} \in \edge_{\MV}^{\omega}$, 
and let $w \in W^{\omega}$ be such that $ws_{n} > w$ and 
$ws_{n+1} > w$. Then, we have 
%
%
\begin{equation} \label{eq:tp}
2M_{ws_{n} \cdot \Lambda_{n}} = 
M_{w \cdot \Lambda_{n}} + M_{ws_{n+1}s_{n} \cdot \Lambda_{n}}.
\end{equation}
\end{lem}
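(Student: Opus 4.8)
The plan is to deduce \eqref{eq:tp} directly from the tropical Pl\"ucker relation of type~(1), applied at the triple $(w,\,n,\,\omega(n))$, together with the $\omega$-invariance of $M_{\bullet}$. First I would note that, since $\ell=2n$, we have $\omega(n)=n+1$, so $n$ and $\omega(n)=n+1$ are adjacent nodes of the Dynkin diagram of type $A_{2n}$; in particular $a_{n,\omega(n)}=a_{\omega(n),n}=-1$. Because $M_{\bullet} \in \edge_{\MV}^{\omega}$ is a BZ datum and $w \in W^{\omega}$ satisfies $ws_{n}>w$ and $ws_{\omega(n)}>w$ by hypothesis, the relation \eqref{eq:tp1-1} holds at $(w,\,n,\,\omega(n))$:
\begin{equation*}
M_{ws_{n} \cdot \Lambda_{n}} + M_{ws_{\omega(n)} \cdot \Lambda_{\omega(n)}}
 = \min\bigl(
   M_{w \cdot \Lambda_{n}} + M_{ws_{n}s_{\omega(n)} \cdot \Lambda_{\omega(n)}},\
   M_{ws_{\omega(n)}s_{n} \cdot \Lambda_{n}} + M_{w \cdot \Lambda_{\omega(n)}}
 \bigr).
\end{equation*}

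Next I would rewrite the weights appearing in this relation under $\omega$. Using that $\omega$ has order $2$, that $\omega(w)=w$ since $w \in W^{\omega}$, that $\omega(s_{k})=s_{\omega(k)}$, and that $\omega(w' \cdot \Lambda_{k})=\omega(w') \cdot \Lambda_{\omega(k)}$ by Remark~\ref{rem:omega}\,(4), one finds $\omega(ws_{n} \cdot \Lambda_{n})=ws_{\omega(n)} \cdot \Lambda_{\omega(n)}$, $\omega(w \cdot \Lambda_{n})=w \cdot \Lambda_{\omega(n)}$, and $\omega(ws_{\omega(n)}s_{n} \cdot \Lambda_{n})=ws_{n}s_{\omega(n)} \cdot \Lambda_{\omega(n)}$. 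By Lemma~\ref{lem:bzmvda}\,(2), membership of $M_{\bullet}$ in $\edge^{\omega}$ means $M_{\omega(\gamma)}=M_{\gamma}$ for all $\gamma \in \Gamma$; hence these identities give $M_{ws_{\omega(n)} \cdot \Lambda_{\omega(n)}}=M_{ws_{n} \cdot \Lambda_{n}}$, $M_{w \cdot \Lambda_{\omega(n)}}=M_{w \cdot \Lambda_{n}}$, and $M_{ws_{n}s_{\omega(n)} \cdot \Lambda_{\omega(n)}}=M_{ws_{\omega(n)}s_{n} \cdot \Lambda_{n}}$. Substituting these into the displayed relation, the left-hand side collapses to $2M_{ws_{n} \cdot \Lambda_{n}}$, while the two arguments of the minimum on the right coincide, both being equal to $M_{w \cdot \Lambda_{n}}+M_{ws_{\omega(n)}s_{n} \cdot \Lambda_{n}}$; therefore the minimum equals this common value. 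Recalling that $\omega(n)=n+1$, this is precisely the asserted equality \eqref{eq:tp}.

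I do not anticipate a genuine obstacle: the argument amounts to a single use of a tropical Pl\"ucker relation followed by the $\omega$-symmetry of $M_{\bullet}$, and the only point demanding care is the bookkeeping of the $\omega$-images of the weights $w' \cdot \Lambda_{k}$ that occur in \eqref{eq:tp1-1}, which is routine given Remark~\ref{rem:omega}\,(4) and $\omega(w)=w$. As a minor variant, one could instead apply \eqref{eq:tp1-1} at $(w,\,\omega(n),\,n)$; the computation is entirely symmetric and produces the same conclusion.
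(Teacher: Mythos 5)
Your proof is correct and is essentially identical to the paper's: both apply the tropical Pl\"ucker relation \eqref{eq:tp1-1} at $(w,\,n,\,n+1)$ (you write $\omega(n)$ for $n+1$) and then use $\omega$-invariance of $M_{\bullet}$ via Lemma~\ref{lem:bzmvda}\,(2) and Remark~\ref{rem:omega}\,(4) to collapse the two arguments of the $\min$ to a common value. The bookkeeping of $\omega$-images of the weights matches the paper's equation \eqref{eq:tp02} exactly.
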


\begin{proof}
By the tropical Pl\"ucker relation at $(w,\,n,\,n+1)$ 
(see \eqref{eq:tp1-1}), we have 
%
%
\begin{equation} \label{eq:tp01}
M_{ws_{n} \cdot \Lambda_{n}}+
M_{ws_{n+1} \cdot \Lambda_{n+1}} = 
 \min \bigl( 
  M_{w \cdot \Lambda_{n}} + 
  M_{ws_{n}s_{n+1} \cdot \Lambda_{n+1}}, \ 
  M_{ws_{n+1}s_{n} \cdot \Lambda_{n}} +
  M_{w \cdot \Lambda_{n+1}} 
 \bigr).
\end{equation}
Since $M_{\bullet}=(M_{\gamma})_{\gamma \in \Gamma} 
\in \edge_{\MV}^{\omega}$ and $w \in W^{\omega}$ by assumption, 
it follows immediately from Lemma~\ref{lem:bzmvda}\,(2) 
along with Remark~\ref{rem:omega}\,(4) that 
%
%
\begin{equation} \label{eq:tp02}
\begin{cases}
M_{ws_{n+1} \cdot \Lambda_{n+1}}
 =M_{ws_{n} \cdot \Lambda_{n}}, & \\[1.5mm]
M_{w \cdot \Lambda_{n}}=M_{w \cdot \Lambda_{n+1}}, & \\[1.5mm]
M_{ws_{n}s_{n+1} \cdot \Lambda_{n+1}}= 
  M_{ws_{n+1}s_{n} \cdot \Lambda_{n}}. &
\end{cases}
\end{equation}
Combining \eqref{eq:tp01} and \eqref{eq:tp02}, 
we obtain \eqref{eq:tp}, as desired. 
\end{proof}

%
\subsection{Description of $\ha{f}_{j}$ in terms of GGMS data.}
\label{subsec:poly}

First, let us recall from \cite[\S5.1]{Kam2}
the description of the lowering Kashiwara operators 
$f_{j}$, $j \in I$, on $\mv$ in terms of GGMS data. 
Fix $j \in I$ and $P \in \mv$. 
Let $\mu_{\bullet}=
 (\mu_{w})_{w \in W} \in \ggms_{\MV}$ be 
the GGMS datum of $P$, and set 
$M_{\bullet}=(M_{\gamma})_{\gamma \in \Gamma}:=
 D(\mu_{\bullet}) \in \edge_{\MV}$. 
Define a reflection $\sigma:\Fh \rightarrow \Fh$ by: 
$\sigma(h)=s_{j} \cdot h+c h_{j}$ for $h \in \Fh$, 
where $c:=c^{M_{\bullet}}_{j}=
M_{\Lambda_{j}}-M_{s_{j} \cdot \Lambda_{j}}-1$. 
Also, we set 
\begin{equation*}
W_{+}:=
  \bigl\{w \in W \mid 
   s_{j} w > w \bigr\}, \qquad
W_{-}:=
  \bigl\{w \in W \mid 
   s_{j} w < w \bigr\}; 
\end{equation*}
note that $W=W_{+} \cup W_{-}$. 
The following was conjectured by Anderson-Mirkovi\'c, and 
proved by Kamnitzer. 
%
%
\begin{thm}[{\cite[Theorem~5.5]{Kam2}}] \label{thm:pam}
Keep the notation above. 
Then, $f_{j}P \in \mv$ is 
the smallest pseudo-Weyl polytope $P' \in \pwp$ 
with GGMS datum $\mu_{\bullet}'=(\mu_{w}')_{w \in W} \in \ggms$ 
such that 

\noindent
{\rm (i)} $\mu_{w}'=\mu_{w}$ for all $w \in W_{-}$, 

\noindent
{\rm (ii)} $\mu_{e}' = \mu_{e}-h_{j}$, 

\noindent
{\rm (iii)} $P'$ contains $\mu_{w}$ for all $w \in W_{+}$, and 

\noindent
{\rm (iv)} if $w \in W_{-}$ is such that 
$\pair{\alpha_{j}}{\mu_{w}} \ge c$, 
then $P'$ contains $\sigma(\mu_{w})$.
\end{thm}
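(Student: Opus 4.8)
The statement to establish is Theorem~\ref{thm:pam}, recast here as the target to reprove from the earlier results; however, since this theorem is attributed to Kamnitzer and we are \emph{given} it, what I would actually write here is a verification that the Anderson--Mirkovi\'c description of $f_j$ is consistent with the combinatorial description of $f_j$ in terms of BZ data given in Theorem~\ref{thm:am}. The plan is to pass between the two parametrizations of a pseudo-Weyl polytope: the GGMS datum $\mu_\bullet$ and the edge datum $M_\bullet$, related by \eqref{eq:M}, \eqref{eq:mu-M}, and the bijection $D$. First I would fix $j \in I$, set $c = c_j^{M_\bullet} = M_{\Lambda_j} - M_{s_j\cdot\Lambda_j} - 1$, and let $P' = f_j P$ with BZ datum $M'_\bullet$ given by the formula of Theorem~\ref{thm:am}: $M'_\gamma = \min(M_\gamma,\, M_{s_j\cdot\gamma} + c)$ when $\pair{\gamma}{h_j} = 1$, and $M'_\gamma = M_\gamma$ otherwise. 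The reflection $\sigma$ of the statement is exactly the affine map $h \mapsto s_j\cdot h + c h_j$, and the key elementary computation is that $\pair{\gamma}{\sigma(h)} = \pair{s_j\cdot\gamma}{h} + c\pair{\gamma}{h_j}$, so that for $\pair{\gamma}{h_j} = 1$ the halfspace $\{\pair{\gamma}{\cdot} \ge M_{s_j\cdot\gamma} + c\}$ is precisely $\sigma$ applied to the halfspace $\{\pair{s_j\cdot\gamma}{\cdot} \ge M_{s_j\cdot\gamma}\}$ — i.e. the new constraint is obtained by reflecting an old facet of $P$.

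The main steps, in order, would be: (a) show $P' \subseteq$ any pseudo-Weyl polytope $Q$ satisfying (i)--(iv), using that $Q \supseteq P$ except near the vertex $\mu_e$ (where it is shifted by $h_j$), together with $Q \supseteq \sigma(\mu_w)$ for the relevant $w \in W_-$; this gives enough defining halfspaces of $P'$ to force $Q \supseteq P'$. Concretely, each facet inequality $\pair{\gamma}{\cdot} \ge M'_\gamma$ of $P'$ is either a facet inequality of $P$ (when $M'_\gamma = M_\gamma$) or is $\sigma$ of a facet of $P$ with outer normal $s_j\cdot\gamma$ (when $M'_\gamma = M_{s_j\cdot\gamma} + c < M_\gamma$); in the first case it follows from $Q \supseteq P$ up to the $\mu_e$-shift, in the second from (iv). (b) Conversely, verify $P'$ itself satisfies (i)--(iv): (i) is immediate since $M'_\gamma = M_\gamma$ whenever $\gamma = w\cdot\Lambda_i$ with $s_j w < w$ (as then $\pair{w\cdot\Lambda_i}{h_j} \le 0$ by minusculeness, cf. \eqref{eq:minu} in the type-$A$ case and its analogue in general); (ii) follows from the length formula \eqref{eq:length} applied to $e$ and $s_j$, since only $M_{\Lambda_j}$ changes at the relevant $\gamma$'s; (iii) is Corollary~\ref{cor:am} ($f_j P \supseteq P$); (iv) requires checking that when $\pair{\alpha_j}{\mu_w} \ge c$ for $w \in W_-$, the point $\sigma(\mu_w)$ satisfies all the defining inequalities of $P'$ — this is where one uses the tropical Pl\"ucker relations to control $M_{s_j\cdot\gamma}$ against $M_\gamma$.

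The step I expect to be the main obstacle is (b)(iv): showing $\sigma(\mu_w) \in P'$. The issue is that $\sigma(\mu_w)$ must satisfy $\pair{\gamma}{\sigma(\mu_w)} \ge M'_\gamma$ for \emph{all} $\gamma \in \Gamma$, and for $\gamma$ with $\pair{\gamma}{h_j} = 1$ one has $\pair{\gamma}{\sigma(\mu_w)} = \pair{s_j\cdot\gamma}{\mu_w} + c$, so the needed inequality $\pair{s_j\cdot\gamma}{\mu_w} + c \ge \min(M_\gamma, M_{s_j\cdot\gamma} + c)$ reduces to $\pair{s_j\cdot\gamma}{\mu_w} \ge M_{s_j\cdot\gamma}$, which holds because $\mu_w \in P$ — the delicate part is the $\gamma$ with $\pair{\gamma}{h_j} = -1$, where $\pair{\gamma}{\sigma(\mu_w)} = \pair{s_j\cdot\gamma}{\mu_w} - c$ and one needs $\pair{s_j\cdot\gamma}{\mu_w} - c \ge M_\gamma = M'_\gamma$; here the hypothesis $\pair{\alpha_j}{\mu_w} \ge c$ enters, combined with a tropical Pl\"ucker relation relating $M_\gamma$, $M_{s_j\cdot\gamma}$, and the coefficient $c$. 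I would handle this by working in the rank-two parabolic containing $s_j$ and the simple reflection adjacent to $\gamma$'s support, reducing to a finite case-check using \eqref{eq:tp1-1}. The minimality assertion (that $f_j P$ is the \emph{smallest} such $Q$) then follows by combining (a) and (b): $P'$ is one such $Q$, and every such $Q$ contains $P'$, so $P'$ is the minimum.
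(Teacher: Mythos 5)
This theorem is not proved in the paper: it is cited verbatim from \cite[Theorem~5.5]{Kam2}, where Kamnitzer established the Anderson--Mirkovi\'c conjecture. There is therefore no ``paper's own proof'' to compare against, and the correct thing to write at this point in the manuscript is simply the citation, as the authors do.

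That said, your sketch of how one might re-derive it from Theorem~\ref{thm:am} is broadly along reasonable lines: the pivot is indeed the affine-reflection identity $\pair{\gamma}{\sigma(h)} = \pair{s_j\cdot\gamma}{h} + c\pair{\gamma}{h_j}$, and minimality amounts to $M'_\gamma \ge N_\gamma$ for all $\gamma\in\Gamma$, where $N_\bullet$ is the edge datum of a competing polytope $Q$. A few points need tightening. First, conditions (i) and (iii) together already force $Q \supseteq P$ outright, since every vertex $\mu_w$, $w\in W$, then lies in $Q$ and $P$ is their convex hull; the qualifier ``except near the vertex $\mu_e$'' is a confusion --- condition (ii) makes $Q$ strictly larger near $\mu_e$ but does not threaten the containment. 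Second, in step~(a), when $M'_\gamma = M_{s_j\cdot\gamma}+c$ with $\pair{\gamma}{h_j}=1$, one should write $\gamma=v\cdot\Lambda_i$, set $w:=s_jv\in W_-$, note $s_j\cdot\gamma=w\cdot\Lambda_i$ and $\pair{\gamma}{\sigma(\mu_w)}=M_{s_j\cdot\gamma}+c$; but before invoking (iv) one must check that this $w$ satisfies $\pair{\alpha_j}{\mu_w}\ge c$, and when it does not a separate argument (ultimately a tropical Pl\"ucker relation) is needed to bound $M_{s_j\cdot\gamma}+c$ from below by $N_\gamma$. Third, the step (b)(iv) you flag as delicate --- the case $\pair{\gamma}{h_j}=-1$ --- is precisely where the hypothesis $\pair{\alpha_j}{\mu_w}\ge c$ must be combined with \eqref{eq:tp1-1}, and ``a finite case-check'' is asserted without detail. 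None of this is wrong, but it is the substance of Kamnitzer's proof, and as written your proposal defers it rather than supplying it.
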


The aim of this subsection is 
to give a description of 
the lowering Kashiwara operators 
$\ha{f}_{j}$, $j \in \ha{I}$, on $\ha{\mv}$ 
in terms of GGMS data. For this aim, 
we introduce some additional notation.
For each $M_{\bullet}=
(M_{\gamma})_{\gamma \in \Gamma} \in \edge^{\omega}$, 
we define a convex polytope 
$\ti{P}(M_{\bullet})$ in $\Fh_{\BR}$ by:
%
%
\begin{equation} \label{eq:tipo02}
\ti{P}(M_{\bullet})=
 \bigl\{ 
 h \in \Fh_{\BR} \mid 
 \pair{\gamma}{h} \ge M_{\gamma} \ 
 \text{for all $\gamma \in \ti{\Gamma}$}
 \bigr\},
\end{equation}
where $\ti{\Gamma}:=
\bigl\{w\Lambda_{i} \mid w \in W^{\omega}, \, i \in I\bigr\}$.
Also, for each $\mu_{\bullet}=
(\mu_{w})_{w \in W} \in \ggms^{\omega}$, we define 
a convex polytope $\ti{P}(\mu_{\bullet})$ in $\Fh_{\BR}$ by:
%
%
\begin{equation} \label{eq:tipo01}
\ti{P}(\mu_{\bullet})=
 \bigl\{ 
 h \in \Fh_{\BR} \mid h \ge_{w} \mu_{w} \ 
 \text{for all $w \in W^{\omega}$}
 \bigr\}.
\end{equation}
Then, it is obvious that 
$\ti{P}(\mu_{\bullet})=\ti{P}(D(\mu_{\bullet}))$ 
for all $\mu_{\bullet} \in \ggms^{\omega}$. 
Moreover, it follows from this equality that the set 
$\ti{P}(\mu_{\bullet})=\ti{P}(D(\mu_{\bullet}))$ is indeed 
a convex polytope (but, not necessarily a pseudo-Weyl polytope) 
in $\Fh_{\BR}$, since it is clearly a bounded polyhedral set 
(see \cite[Chapters~I and I\!I]{E}). We set 
$\ti{\pwp}:=
 \bigl\{
  \ti{P}(M_{\bullet}) \mid M_{\bullet} \in \edge^{\omega}
 \bigr\}=
 \bigl\{
  \ti{P}(\mu_{\bullet}) \mid \nu_{\bullet} \in \ggms^{\omega}
 \bigr\}$.

%
\begin{rem} \label{rem:tipo}
(1) For each $\mu_{\bullet} \in \ggms^{\omega}$, we have 
$\ti{P}(\mu_{\bullet}) \supset P(\mu_{\bullet})$. 

\noindent
(2) We see from Remark~\ref{rem:omega}\,(5) and 
Lemma~\ref{lem:bzmvda}\,(1) that the set 
$\omega(\ti{P}(\mu_{\bullet}))=
 \bigl\{\omega(h) \mid h \in \ti{P}(\mu_{\bullet})\bigr\}$ 
is identical to $\ti{P}(\mu_{\bullet})$ for all 
$\mu_{\bullet} \in \ggms^{\omega}$. 

\noindent
(3) It follows from \eqref{eq:phi-po} and 
\eqref{eq:tipo01} that 
%
%
\begin{equation} \label{eq:tipo03}
\Phi(P(\mu_{\bullet}))=
\ha{P}(\Phi(\mu_{\bullet}))=
\ti{P}(\mu_{\bullet}) \cap \Fh^{\omega}
\quad
\text{for all $\mu_{\bullet} \in \ggms^{\omega}$}. 
\end{equation}
\end{rem}

From \eqref{eq:tipo03}, we deduce that 
if $\ti{P}(\mu_{\bullet}) = \ti{P}(\mu_{\bullet}')$ 
for $\mu_{\bullet},\,\mu_{\bullet}' \in \ggms^{\omega}$, 
then $\mu_{\bullet} = \mu_{\bullet}'$ since 
$\Phi:\mv^{\omega} \rightarrow \ha{\mv}$ is a bijection. 
Equivalently, 
if $\ti{P}(M_{\bullet}) = \ti{P}(M_{\bullet}')$ for 
$M_{\bullet},\,M_{\bullet}' \in \edge^{\omega}$, 
then $M_{\bullet} = M_{\bullet}'$. 
Thus, by abuse of terminology, we say that 
$\mu_{\bullet} \in \ggms^{\omega}$ 
(resp., $M_{\bullet} \in \edge^{\omega}$) 
is the GGMS (resp., BZ) datum of the convex polytope 
$\ti{P}(\mu_{\bullet})$ (resp., $\ti{P}(M_{\bullet})$). 

Fix $j \in \ha{I}$ and $\ha{P} \in \ha{\mv}$. 
Set $P:=\Phi^{-1}(\ha{P}) \in \mv^{\omega}$. 
Let 
$\mu_{\bullet}=(\mu_{w})_{w \in W} \in \ggms_{\MV}^{\omega}$ and 
$\mu_{\bullet}'=(\mu_{w}')_{w \in W} \in \ggms_{\MV}^{\omega}$ 
be 
the GGMS data of $P$ and $f_{j}^{\omega}P$, respectively, and set 
$M_{\bullet}=
 (M_{\gamma})_{\gamma \in \Gamma}:=D(\mu_{\bullet}) 
 \in \edge_{\MV}^{\omega}$ and 
$M_{\bullet}'=
 (M_{\gamma}')_{\gamma \in \Gamma}:=D(\mu_{\bullet}') 
 \in \edge_{\MV}^{\omega}$.
We define reflections $\sigma:\Fh \rightarrow \Fh$ and 
$\tau:\Fh \rightarrow \Fh$ by: 
\begin{equation*}
\sigma(h)=s_{j} \cdot h+c h_{j}
\qquad \text{and} \qquad
\tau(h)=s_{\omega(j)} \cdot h+c h_{\omega(j)},
\end{equation*}
for $h \in \Fh$, where $c:=c^{M_{\bullet}}_{j}=
M_{\Lambda_{j}}-M_{s_{j} \cdot \Lambda_{j}}-1$; 
note that $c^{M_{\bullet}}_{j}=c^{M_{\bullet}}_{\omega(j)}$, 
and that 
\begin{equation*}
\begin{cases}
\sigma\tau=\tau\sigma & 
 \text{if $1 \le j \le n-1$}, \\[1.5mm]
\sigma=\tau &
 \text{if $\ell=2n-1$, $n \in \BZ_{\ge 2}$, and $j=n$}, \\[1.5mm]
\sigma\tau\sigma=\tau\sigma\tau & 
 \text{if $\ell=2n$, $n \in \BZ_{\ge 2}$, and $j=n$}.
\end{cases}
\end{equation*}
Also, we set 
\begin{equation*}
W^{\omega}_{+}:=
  \bigl\{w \in W^{\omega} \mid 
   s_{j}^{\omega}w > w \bigr\}, \qquad
W^{\omega}_{-}:=
  \bigl\{w \in W^{\omega} \mid 
   s_{j}^{\omega}w < w \bigr\}; 
\end{equation*}
note that $W^{\omega}=W_{+}^{\omega} \cup W_{-}^{\omega}$ 
by Remark~\ref{rem:theta}\,(3). We deduce that 
\begin{align*}
\ha{f}_{j}\ha{P} & 
   =\ha{f}_{j}\Phi(P)
   =\Phi(f_{j}^{\omega}P) 
    \qquad \text{by Theorem~\ref{thm:fixed-mv}} \\
 & =\Phi(P(\mu_{\bullet}'))=\ha{P}(\Phi(\mu_{\bullet}'))
   =\ti{P}(\mu_{\bullet}') \cap \Fh^{\omega}
    \qquad \text{by \eqref{eq:tipo03}}. 
\end{align*}
Thus, it suffices to give a description of 
the convex polytope 
$\ti{P}(\mu_{\bullet}') \subset \Fh_{\BR}$. 

%
\begin{thm} \label{thm:pd01}
Keep the notation above. Assume that 
$\ell=2n-1$, $n \in \BZ_{\ge 2}$, or 
$\ell=2n$, $n \in \BZ_{\ge 2}$, and 
$1 \le j \le n-1$. Then, $\ti{P}(\mu_{\bullet}')$ is 
the smallest convex polytope $\ti{P}$ in $\ti{\pwp}$ 
with GGMS datum $\mu_{\bullet}''=(\mu_{w}'')_{w \in W} \in \ggms^{\omega}$
satisfying the following conditions {\rm (i)-(v)}{\rm:}

\noindent
{\rm (i)} If $w \in W^{\omega}_{-}$, then 
$\mu_{w}''=\mu_{w}$.

\noindent
{\rm (ii)} $\mu_{e}'' = \mu_{e}-h_{j}^{\omega}$.

\noindent
{\rm (iii)} If $w \in W^{\omega}_{+}$, 
then $\mu_{w} \in \ti{P}$.

\noindent
{\rm (iv)} If $w \in W$ is such that 
$s_{j}w < w$ and $\pair{\alpha_{j}}{\mu_{w}} \ge c$, 
then $\sigma(\mu_{w}) \in \ti{P}$. 
Also, if $w \in W$ is such that $s_{\omega(j)}w < w$ and 
$\pair{\alpha_{\omega(j)}}{\mu_{w}} \ge c$, then 
$\tau(\mu_{w}) \in \ti{P}$.

\noindent
{\rm (v)} If $w \in W^{\omega}_{-}$ is such that
$\pair{\alpha_{j}}{\mu_{w}} \ge c$, then $\sigma\tau(\mu_{w}) \in \ti{P}$.
\end{thm}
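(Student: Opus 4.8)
\emph{Strategy.} Since $1\le j\le n-1$, by \eqref{eq:ok} we have $f_j^\omega=f_jf_{\omega(j)}=f_{\omega(j)}f_j$, so $f_j^\omega P=f_j\bigl(f_{\omega(j)}P\bigr)$; write $f_{\omega(j)}P=P(\mu_\bullet^{(1)})$ with $\mu_\bullet^{(1)}=(\mu_w^{(1)})_{w\in W}\in\ggms_{\MV}^\omega$. The plan is to apply Kamnitzer's Theorem~\ref{thm:pam} twice: first to $P$ with the operator $f_{\omega(j)}$ (reflection $\tau$ and, by the constant computations in the proof of Proposition~\ref{prop:ok}(1), constant $c=c^{M_\bullet}_{\omega(j)}=c^{M_\bullet}_j$), describing $P(\mu_\bullet^{(1)})$; then to $P(\mu_\bullet^{(1)})$ with $f_j$ (reflection $\sigma$ and the same constant $c$, since $c^{M_\bullet^{(1)}}_j=c$ again by loc.\ cit.), describing $P(\mu_\bullet')$. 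Throughout I will use, for $w\in W^\omega$, the symmetries $s_jw<w\Leftrightarrow s_{\omega(j)}w<w\Leftrightarrow s_j^\omega w<w$ (the diagram automorphism and Remark~\ref{rem:theta}(3)) and $\pair{\alpha_j}{\mu_w}=\pair{\alpha_{\omega(j)}}{\mu_w}$ (Remark~\ref{rem:omega}(2), Lemma~\ref{lem:bzmvda}(1)), together with the $\omega$-stability of $\ti{P}(\mu_\bullet')$ (Remark~\ref{rem:tipo}(2)). An equivalent route would be to derive everything from Proposition~\ref{prop:ok}(1) by restricting the BZ-data formula to $\ti{\Gamma}$ and translating back, mimicking Kamnitzer's passage from the BZ description to the polytope description; but the two-step composition above is more transparent.

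\emph{First step: $\ti{P}(\mu_\bullet')$ satisfies {\rm(i)--(v)}.} For (i)--(ii): if $w\in W^\omega_-$ then $s_{\omega(j)}w<w$ gives $\mu_w^{(1)}=\mu_w$ by Theorem~\ref{thm:pam}(i) for $f_{\omega(j)}$, and then $s_jw<w$ gives $\mu_w'=\mu_w^{(1)}=\mu_w$; also $\mu_e'=\mu_e^{(1)}-h_j=(\mu_e-h_{\omega(j)})-h_j=\mu_e-h_j^\omega$. For (iii): $\mu_w\in P(\mu_\bullet)\subseteq P(\mu_\bullet^{(1)})\subseteq P(\mu_\bullet')\subseteq\ti{P}(\mu_\bullet')$ by Corollary~\ref{cor:am} (twice) and Remark~\ref{rem:tipo}(1). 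For (iv): if $s_{\omega(j)}w<w$ and $\pair{\alpha_{\omega(j)}}{\mu_w}\ge c$ then Theorem~\ref{thm:pam}(iv) for $f_{\omega(j)}$ gives $\tau(\mu_w)\in P(\mu_\bullet^{(1)})\subseteq\ti{P}(\mu_\bullet')$, and the $\sigma$-half follows by applying $\omega$ and $\omega$-stability, since $\omega(\sigma(\mu_w))=\tau(\mu_{\omega(w)})$ with $\omega(w)$ satisfying the first hypothesis. For (v): if $w\in W^\omega_-$ and $\pair{\alpha_j}{\mu_w}\ge c$, then $\mu_w^{(1)}=\mu_w$, and by the explicit effect of $f_{\omega(j)}$ on the vertices of MV polytopes (as in Kamnitzer's proof of Theorem~\ref{thm:pam}) $\tau(\mu_w)$ is the vertex $\mu_v^{(1)}$ for $v:=s_{\omega(j)}w$; here $s_jv<v$ because $s_j$ and $s_{\omega(j)}$ commute and are both left descents of $w$, and $\pair{\alpha_j}{\mu_v^{(1)}}=\pair{\alpha_j}{\mu_w}\ge c$, so Theorem~\ref{thm:pam}(iv) for $f_j$ yields $\sigma\tau(\mu_w)=\sigma(\mu_v^{(1)})\in P(\mu_\bullet')\subseteq\ti{P}(\mu_\bullet')$.

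\emph{Second step: minimality.} Let $\ti{P}(\nu_\bullet)\in\ti{\pwp}$, $\nu_\bullet\in\ggms^\omega$, satisfy (i)--(v); I will show $\ti{P}(\nu_\bullet)\supseteq\ti{P}(\mu_\bullet')$. By \eqref{eq:tipo03} the slices $\ti{P}(\nu_\bullet)\cap\Fh^\omega=\ha{P}(\Phi(\nu_\bullet))$ and $\ti{P}(\mu_\bullet')\cap\Fh^\omega=\ha{P}(\Phi(\mu_\bullet'))$ are pseudo-Weyl polytopes for $\Fg^\omega$; since inclusion of pseudo-Weyl polytopes is equivalent to pointwise comparison of BZ data, Remark~\ref{rem:phi-M} (with $a_i>0$) together with the $\omega$-symmetry of the BZ data reduces the claim to $\ti{P}(\nu_\bullet)\cap\Fh^\omega\supseteq\ti{P}(\mu_\bullet')\cap\Fh^\omega$. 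Now $\ti{P}(\mu_\bullet')\cap\Fh^\omega=\Phi(f_j^\omega P)=\ha{f}_j\ha{P}$ is the convex hull of $\{\mu_w'\mid w\in W^\omega\}$, and by the first step (together with $\mu_w\in P(\mu_\bullet')\cap\Fh^\omega$ for $w\in W^\omega_+$) this coincides with the convex hull of $\{\mu_w\mid w\in W^\omega\}\cup\{\mu_e-h_j^\omega\}\cup\{\sigma\tau(\mu_w)\mid w\in W^\omega_-,\ \pair{\alpha_j}{\mu_w}\ge c\}$, whose points all lie in $\Fh^\omega$ (note $\sigma\tau$ maps $\Fh^\omega$ into itself) and in $\ti{P}(\nu_\bullet)$ by conditions (i)--(iii) and (v). Hence $\ti{P}(\nu_\bullet)\cap\Fh^\omega$ contains that convex hull, hence contains $\ti{P}(\mu_\bullet')\cap\Fh^\omega$; this gives minimality and completes the proof.

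\emph{Expected main obstacle.} The genuinely delicate point will be the transition from the two honest pseudo-Weyl polytopes $P(\mu_\bullet^{(1)}),P(\mu_\bullet')$ (cut out by all of $\Gamma$) to the convex polytope $\ti{P}(\mu_\bullet')$ (cut out only by $\ti{\Gamma}$): I must check that shrinking the family of defining half-spaces neither drops a needed inequality nor enlarges $\ti{P}(\mu_\bullet')$, which is exactly why conditions (iv) and (v) take their particular form and why the case split $s_{\omega(j)}w\lessgtr w$, together with the identification of $\tau(\mu_w)$ as a vertex of $f_{\omega(j)}P$ eligible for a further $\sigma$-reflection, must be carried out with care. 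Everything else is a routine combination of Theorem~\ref{thm:pam}, Corollary~\ref{cor:am}, the symmetry facts, and Remark~\ref{rem:tipo}; the remaining two nodes (Theorems~\ref{thm:pd02} and~\ref{thm:pd03}) go through identically with $f_j^\omega$ replaced by the relevant composite from \eqref{eq:ok}.
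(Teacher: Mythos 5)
Your first step (conditions (i)--(iv)) follows essentially the same lines as the paper, with a nice variant: you derive the $\sigma$-half of (iv) from the $\tau$-half using $\omega$-stability of $\ti{P}(\mu_{\bullet}')$ (Remark~\ref{rem:tipo}\,(2)) instead of the commutativity $f_{j}f_{\omega(j)}=f_{\omega(j)}f_{j}$. Your reduction of the minimality question to the slices $\ti{P}(\cdot)\cap\Fh^{\omega}$ via Remark~\ref{rem:phi-M} and $\omega$-symmetry of BZ data is also sound. However, there are two genuine gaps.

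\emph{Gap 1 (step~1, condition~(v)).} You assert that $\tau(\mu_{w})$ equals the \emph{vertex} $\mu_{v}^{(1)}$ of $f_{\omega(j)}P$ with $v:=s_{\omega(j)}w$, ``as in Kamnitzer's proof of Theorem~\ref{thm:pam}.'' Theorem~\ref{thm:pam} says only that $\tau(\mu_{w})\in f_{\omega(j)}P$; it does not identify it as a particular vertex. To apply Theorem~\ref{thm:pam}\,(iv) for $f_{j}$ to the polytope $P(\mu_{\bullet}^{(1)})$ you genuinely need $\tau(\mu_{w})$ to coincide with some $\mu_{v}^{(1)}$, and the required identity $M_{s_{\omega(j)}w\Lambda_{i}}^{(1)}=M_{w\Lambda_{i}}+c\,\pair{s_{\omega(j)}w\Lambda_{i}}{h_{\omega(j)}}$ for all $i$ (equivalently $M_{w\Lambda_{i}}+c\le M_{s_{\omega(j)}w\Lambda_{i}}$ whenever $\pair{s_{\omega(j)}w\Lambda_{i}}{h_{\omega(j)}}=1$) is a nontrivial consequence of the hypothesis $\pair{\alpha_{\omega(j)}}{\mu_{w}}\ge c$ that must be proved, not cited. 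The paper sidesteps this entirely by working on the level of BZ data: Claims~1--3 verify $\sigma\tau(\mu_{w})\ge_{v}\mu_{v}'$ for all $v\in W^{\omega}$ directly from the formula of Proposition~\ref{prop:ok}\,(1), never identifying $\tau(\mu_{w})$ with a vertex.

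\emph{Gap 2 (step~2, minimality) --- the critical one.} Your argument hinges on the assertion that $\ti{P}(\mu_{\bullet}')\cap\Fh^{\omega}$ (the convex hull of $\{\mu_{w}'\mid w\in W^{\omega}\}$) \emph{coincides} with the convex hull of $\{\mu_{w}\mid w\in W^{\omega}\}\cup\{\mu_{e}-h_{j}^{\omega}\}\cup\{\sigma\tau(\mu_{w})\mid w\in W^{\omega}_{-},\,\pair{\alpha_{j}}{\mu_{w}}\ge c\}$, justified ``by the first step.'' But your first step only gives that the second convex hull is \emph{contained in} $\ti{P}(\mu_{\bullet}')\cap\Fh^{\omega}$; the reverse inclusion --- that every new vertex $\mu_{w}'$ for $w\in W^{\omega}_{+}$ already lies in the hull of those points, i.e.\ that $\mu_{w}'$ is either $\mu_{w}$ or $\sigma\tau(\mu_{s_{j}^{\omega}w})$ --- is precisely the substance of the minimality statement you are trying to prove, and you never establish it. (One cannot invoke an AM-type theorem for $\Fg^{\omega}$ directly, since $\Fg^{\omega}$ is of type $B$ or $C$, for which the original AM conjecture fails.) The paper instead proves $M_{\gamma}'\ge M_{\gamma}''$ for all $\gamma\in\ti{\Gamma}$ directly: Claims~4--7 verify that each term inside the $\min$ in the formula of Proposition~\ref{prop:ok}\,(1) dominates $M_{\gamma}''$, using conditions (i)--(v) on $\ti{P}''$ together with the edge inequalities. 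Your proposal omits exactly this content. To repair it you would need a lemma identifying the vertices $\mu_{w}'$, $w\in W^{\omega}_{+}$, in terms of $\mu_{\bullet}$ and $\sigma\tau$, which in turn requires the BZ-data analysis you were hoping to avoid.
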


\begin{proof}
First we prove that the convex polytope 
$\ti{P}(\mu_{\bullet}')$ satisfies 
conditions (i)-(v). 
We see from the proof of Proposition~\ref{prop:com} along with
Remark~\ref{rem:theta}\,(3) that $\mu_{e}'=\mu_{e}-h_{j}^{\omega}$, and 
$\mu_{w}'=\mu_{w}$ for all $w \in W^{\omega}_{-}$, i.e., that 
$\ti{P}(\mu_{\bullet}')$ satisfies conditions (i) and (ii). 
Furthermore, by Remark~\ref{rem:tipo}\,(1) and 
Corollary~\ref{cor:am}, we have 
%
%
\begin{equation} \label{eq:larger}
\ti{P}(\mu_{\bullet}') \supset P(\mu_{\bullet}')=f_{j}^{\omega}P=
f_{j}f_{\omega(j)}P \supset f_{\omega(j)}P. 
\end{equation}
Also, we see from Remark~\ref{rem:theta}\,(3) that 
if $w \in W_{+}^{\omega}$, then $s_{\omega(j)}w > w$. 
Hence, by Theorem~\ref{thm:pam}, 
$f_{\omega(j)}P$ contains $\mu_{w}$ 
for all $w \in W^{\omega}_{+}$. 
Therefore, it follows from \eqref{eq:larger} that 
$\ti{P}(\mu_{\bullet}')$ contains $\mu_{w}$ for all 
$w \in W^{\omega}_{+}$, i.e., that 
$\ti{P}(\mu_{\bullet}')$ satisfies condition (iii). 
If $w \in W$ is such that $s_{\omega(j)}w < w$ and 
$\pair{\alpha_{\omega(j)}}{\mu_{w}} \ge c$, then 
by Theorem~\ref{thm:pam}, 
$\tau(\mu_{w}) \in f_{\omega(j)}P$. 
Therefore, $\tau(\mu_{w}) \in \ti{P}(\mu_{\bullet}')$ 
again by \eqref{eq:larger}. 
Similarly, using the equation 
$f_{j}^{\omega}=f_{j}f_{\omega(j)}=f_{\omega(j)}f_{j}$ 
(see Remark~\ref{rem:ok}), we can show that 
if $w \in W$ is such that $s_{j}w < w$ and 
$\pair{\alpha_{j}}{\mu_{w}} \ge c$, then 
$\sigma(\mu_{w}) \in \ti{P}(\mu_{\bullet}')$. 
Namely, we have shown that 
$\ti{P}(\mu_{\bullet}')$ satisfies condition (iv). 
It remains to show that $\ti{P}(\mu_{\bullet}')$ satisfies 
condition (v). Let $w \in W^{\omega}_{-}$ be such that
$\pair{\alpha_{j}}{\mu_{w}} \ge c$. 
By \eqref{eq:tipo01}, 
it suffices to show that 
%
%
\begin{equation} \label{eq:s0}
\sigma\tau(\mu_{w}) \ge_{v} \mu_{v}'
\qquad \text{for all $v \in W^{\omega}$}.
\end{equation}
%
%
\begin{claim} \label{c01}
If $v \in W^{\omega}_{-}$, then 
$\sigma\tau(\mu_{v}) 
 \ge_{s_{j}^{\omega}v} 
 \mu_{s_{j}^{\omega}v}'$. 
\end{claim}

We set $\gamma_{i}:=s_{j}^{\omega}v \cdot \Lambda_{i}$ for $i \in I$. 
Since $v \in W^{\omega}_{-}$, and hence 
$s_{j}^{\omega}v \in W^{\omega}_{+}$, 
it follows from Remark~\ref{rem:theta}\,(3) and 
\cite[Proposition~4\,(i) in \S5.2]{MP} that 
$(s_{j}^{\omega}v)^{-1} \cdot h_{j}$ and 
$(s_{j}^{\omega}v)^{-1} \cdot h_{\omega(j)}$ are 
positive coroots of $\Fg$. 
Therefore, we have 
%
%
\begin{equation} \label{eq:s01}
\begin{cases}
\pair{\gamma_{i}}{h_{j}}=
\pair{\Lambda_{i}}{(s_{j}^{\omega}v)^{-1} \cdot h_{j}} \ge 0, & \\[3mm]
\pair{\gamma_{i}}{h_{\omega(j)}}=
\pair{\Lambda_{i}}{(s_{j}^{\omega}v)^{-1} \cdot h_{\omega(j)}} \ge 0. &
\end{cases}
\end{equation}
Hence, by \eqref{eq:minu}, we deduce that
%
%
\begin{equation} \label{eq:s02}
\bigl(
 \pair{\gamma_{i}}{h_{j}},\,
 \pair{\gamma_{i}}{h_{\omega(j)}}
\bigr)=
(0,0),\,(0,1),\,(1,0), \text{ or } (1,1).
\end{equation}
Now, recall that 
$\sigma\tau(\mu_{v}) \ge_{s_{j}^{\omega}v} 
 \mu_{s_{j}^{\omega}v}'$ if and only if 
\begin{equation*}
\pair{\gamma_{i}}{\sigma\tau(\mu_{v})} \ge 
\pair{\gamma_{i}}{\mu_{s_{j}^{\omega}v}'}=
M_{\gamma_{i}}' \quad \text{for all $i \in I$.}
\end{equation*}
Also, by direct calculation, we obtain 
\begin{align*}
\pair{\gamma_{i}}{\sigma\tau(\mu_{v})}
 & = \pair{\gamma_{i}}{s_{j}^{\omega} \cdot \mu_{v}+ch_{j}^{\omega}}
   = \pair{\gamma_{i}}{s_{j}^{\omega} \cdot \mu_{v}}+
     c \pair{\gamma_{i}}{h_{j}^{\omega}} \\
 & = \pair{v\Lambda_{i}}{\mu_{v}}+c \pair{\gamma_{i}}{h_{j}^{\omega}}
   = M_{v \cdot \Lambda_{i}}+c \pair{\gamma_{i}}{h_{j}^{\omega}} \\
 & = M_{s_{j}^{\omega} \cdot \gamma_{i}}+c \pair{\gamma_{i}}{h_{j}^{\omega}}. 
\end{align*}
If $\pair{\gamma_{i}}{h_{j}} = \pair{\gamma_{i}}{h_{\omega(j)}} = 1$, 
then $M_{s_{j}^{\omega} \cdot \gamma_{i}}+c \pair{\gamma_{i}}{h_{j}^{\omega}} = 
M_{s_{j}^{\omega} \cdot \gamma_{i}}+2c$. 
Therefore, in this case, we deduce from 
Proposition~\ref{prop:ok}\,(1) that 
$\pair{\gamma_{i}}{\sigma\tau(\mu_{v})}= 
 M_{s_{j}^{\omega} \cdot \gamma_{i}}+2c \ge M_{\gamma_{i}}'$. 
Similarly, we can show that 
$\pair{\gamma_{i}}{\sigma\tau(\mu_{v})} \ge M_{\gamma_{i}}'$ 
in all other cases of \eqref{eq:s02}. This proves Claim~\ref{c01}. 
%
%
\begin{claim} \label{c02} 
Inequality \eqref{eq:s0} holds 
for all $v \in W^{\omega}_{+}$. 
\end{claim}

Since $v \in W^{\omega}_{+}$, and hence 
$s_{j}^{\omega}v \in W^{\omega}_{-}$, 
it follows from Claim~\ref{c01} that
%
%
\begin{equation} \label{eq:s2-1}
\sigma\tau(\mu_{s_{j}^{\omega}v}) \ge_{v} \mu_{v}'.
\end{equation}
Also, since $\mu_{\bullet}=(\mu_{w})_{w \in W} \in \ggms$, 
it follows that $\mu_{w} \ge_{s_{j}^{\omega}v} \mu_{s_{j}^{\omega}v}$, 
from which we deduce by direct calculation that 
$\tau(\mu_{w}) \ge_{s_{j}v} \tau(\mu_{s_{j}^{\omega}v})$, 
and then that 
$\sigma\tau(\mu_{w}) \ge_{v} \sigma\tau(\mu_{s_{j}^{\omega}v})$.
Combining the last inequality with \eqref{eq:s2-1}, 
we get $\sigma\tau(\mu_{w}) \ge_{v} \mu_{v}'$, as desired. 
This proves Claim~\ref{c02}. 
%
%
\begin{claim} \label{c03} 
Inequality \eqref{eq:s0} holds 
for all $v \in W^{\omega}_{-}$. 
\end{claim}

Since $w \in W^{\omega}_{-} \subset W^{\omega}$, 
it follows from Lemma~\ref{lem:bzmvda}\,(1) that 
$\omega(\mu_{w})=\mu_{w}$, and hence 
$\pair{\alpha_{j}}{\mu_{w}}=\pair{\alpha_{\omega(j)}}{\mu_{w}}$. 
Using this, we have
%
%
\begin{align}
\sigma\tau(\mu_{w})-\mu_{w}
 & = (s_{j}^{\omega} \cdot \mu_{w}+c h_{j}^{\omega})-\mu_{w} \nonumber \\
 & = \mu_{w}-
     \pair{\alpha_{j}}{\mu_{w}}h_{j}-
     \pair{\alpha_{\omega(j)}}{\mu_{w}}h_{\omega(j)}+
     c h_{j}^{\omega}- \mu_{w} \nonumber \\
 & = \bigl(c-\pair{\alpha_{j}}{\mu_{w}}\bigr) h_{j}^{\omega}.  \label{eq:s3-0}
\end{align}
Since $v \in W^{\omega}_{-}$, 
it follows from Remark~\ref{rem:theta}\,(3) and 
\cite[Proposition~4\,(i) in \S5.2]{MP} that 
$v^{-1}(h_{j}^{\omega})$ is 
a negative coroot of $\Fg^{\omega}$, 
and hence $h_{j}^{\omega} \le_{v} 0$. 
But, since $c-\pair{\alpha_{j}}{\mu_{w}} \le 0$ by assumption, 
we see from \eqref{eq:s3-0} that 
%
%
\begin{equation} \label{eq:s3-1}
\sigma\tau(\mu_{w}) \ge_{v} \mu_{w}.
\end{equation} 
Also, it follows that $\mu_{w} \ge_{v} \mu_{v}$ 
since $\mu_{\bullet}=(\mu_{w})_{w \in W} \in \ggms$, and 
that $\mu_{v}'=\mu_{v}$ since $v \in W^{\omega}_{-}$ and 
$\ti{P}(\mu_{\bullet}')$ satisfies condition (i) as shown above. 
Combining these facts with \eqref{eq:s3-1}, we deduce that 
$\sigma\tau(\mu_{w}) \ge_{v} \mu_{w} \ge_{v} 
\mu_{v}=\mu_{v}'$. This proves Claim~\ref{c03}.

\vsp

By Claims~\ref{c02} and \ref{c03}, 
inequality \eqref{eq:s0} holds 
for all $v \in W^{\omega}=W^{\omega}_{+} \cup W^{\omega}_{-}$, that is, 
$\ti{P}(\mu_{\bullet}')$ satisfies condition (v). 
Thus we have proved that the convex polytope $\ti{P}(\mu_{\bullet}')$ 
satisfies conditions (i)-(v). 

\vsp

Next, we prove that 
if $\ti{P}'' \in \ti{\pwp}$ satisfies 
conditions (i)-(v), then $\ti{P}''$ must contain 
$\ti{P}(\mu_{\bullet}')$. 
Let $\ti{P}''=\ti{P}(\mu_{\bullet}'') \in \ti{\pwp}$ 
be a convex polytope with GGMS datum $\mu_{\bullet}''=
(\mu_{w}'')_{w \in W} \in \ggms^{\omega}$ 
satisfying conditions (i)-(v), 
and set $(M_{\gamma}'')_{\gamma \in \Gamma}:=
D(\mu_{\bullet}'') \in \edge^{\omega}$.
In order to prove that $\ti{P}'' \supset \ti{P}(\mu_{\bullet}')$, 
it suffices to show that $M_{\gamma}' \ge M_{\gamma}''$
for all $\gamma \in \ti{\Gamma}$ (see \eqref{eq:tipo02}). 
%
%
\begin{claim} \label{c04} 
The inequality $M_{\gamma} \ge M_{\gamma}''$ holds 
for all $\gamma \in \ti{\Gamma}$.
\end{claim}

Since $\ti{P}''=\ti{P}(\mu_{\bullet}'')$ 
satisfies conditions (i) and (iii), 
it follows that $\mu_{w} \in \ti{P}''$ 
for all $w \in W^{\omega}=W^{\omega}_{+} \cup W^{\omega}_{-}$. 
Hence, by \eqref{eq:tipo01}, we have $\mu_{w} \ge_{w} \mu_{w}''$ 
for all $w \in W^{\omega}$, which implies that 
$M_{\gamma} \ge M_{\gamma}''$ for all $\gamma \in \ti{\Gamma}$, 
as desired. This proves Claim~\ref{c04}. 
%
%
\begin{claim} \label{c05} 
Let $\gamma \in \ti{\Gamma}$ be such that $\pair{\gamma}{h_{j}}=1$. 
Then, we have $M_{s_{j} \cdot \gamma}+c \ge M_{\gamma}''$.
\end{claim}

Write the $\gamma \in \ti{\Gamma}$ in the form 
$\gamma=s_{j}w \cdot \Lambda_{i}$, with $w \in W$ and $i \in I$. 
Since $\pair{\gamma}{h_{j}}=1 > 0$, 
it follows from \cite[Proposition~4\,(i) in \S5.2]{MP} that $s_{j}w < w$. 
Also, we have
%
%
\begin{align}
\pair{\gamma}{\sigma(\mu_{w})}
& = \pair{\gamma}{s_{j} \cdot \mu_{w}+c h_{j}}
  = \pair{\gamma}{s_{j} \cdot \mu_{w}}+c \pair{\gamma}{h_{j}}
  = \pair{s_{j} \cdot \gamma}{\mu_{w}}+c \nonumber \\ 
& = \pair{w \cdot \Lambda_{i}}{\mu_{w}}+c 
  =M_{w \cdot \Lambda_{i}}+c
  =M_{s_{j} \cdot \gamma}+c. \label{eq:s5-1}
\end{align}
Assume first that $\pair{\alpha_{j}}{\mu_{w}} \ge c$. 
Since $s_{j}w < w$ as seen above, we have 
$\sigma(\mu_{w}) \in \ti{P}''$ by condition (iv).
Therefore, $\pair{\gamma}{\sigma(\mu_{w})} \ge M_{\gamma}''$
(see \eqref{eq:tipo02}), and hence 
$M_{s_{j} \cdot \gamma}+c \ge M_{\gamma}''$ by \eqref{eq:s5-1}.
Assume next that $\pair{\alpha_{j}}{\mu_{w}} < c$. 
Recall that $\mu_{w} \ge_{s_{j}w} \mu_{s_{j}w}$ since 
$\mu_{\bullet}=(\mu_{w})_{w \in W} \in \ggms$. 
Hence it follows that 
%
%
\begin{equation} \label{eq:s5-2}
\pair{\gamma}{\mu_{w}} \ge 
\pair{\gamma}{\mu_{s_{j}w}}=M_{\gamma}.
\end{equation}
Because $\pair{\gamma}{h_{j}}=1$ and 
$c-\pair{\alpha_{j}}{\mu_{w}} > 0$ by 
assumption, we have
\begin{align*}
\pair{\gamma}{\sigma(\mu_{w})} 
 & = \pair{\gamma}{s_{j} \cdot \mu_{w}+ch_{j}} \\
 & = \pair{\gamma}{\mu_{w}}+
     \bigl(\underbrace{c-\pair{\alpha_{j}}{\mu_{w}}}_{> 0}\bigr)
     \underbrace{\pair{\gamma}{h_{j}}}_{=1} > 
     \pair{\gamma}{\mu_{w}} \\[1.5mm]
 & \ge M_{\gamma} \quad \text{by \eqref{eq:s5-2}} \\
 & \ge M_{\gamma}'' \quad \text{by Claim~\ref{c04}}.
\end{align*}
Combining this with \eqref{eq:s5-1}, 
we obtain $M_{s_{j}\gamma}+c \ge M_{\gamma}''$.
This proves Claim~\ref{c05}. 
%
%
\begin{claim} \label{c06}
Let $\gamma \in \ti{\Gamma}$ be such that 
$\pair{\gamma}{h_{\omega(j)}}=1$. Then, we have
$M_{s_{\omega(j)} \cdot \gamma}+c \ge M_{\gamma}''$.
\end{claim}

Note that $\omega(\gamma) \in \ti{\Gamma}$ 
by Remark~\ref{rem:omega}\,(4), and that 
$\pair{\omega(\gamma)}{h_{j}}=\pair{\gamma}{h_{\omega(j)}}=1$. 
Hence, by Claim~\ref{c05}, 
$M_{s_{j} \cdot \omega(\gamma)}+c \ge M_{\omega(\gamma)}''$.
Since $M_{\bullet} \in \edge_{\MV}^{\omega} \subset \edge^{\omega}$ 
and $M_{\bullet}'' \in \edge^{\omega}$, it follows from 
Lemma~\ref{lem:bzmvda}\,(2) that 
$M_{s_{j} \cdot \omega(\gamma)}=M_{s_{\omega(j)} \cdot \gamma}$ and 
$M_{\omega(\gamma)}''=M_{\gamma}''$. Therefore, we obtain 
$M_{s_{\omega(j)} \cdot \gamma}+c \ge M_{\gamma}''$, as desired. 
This proves Claim~\ref{c06}. 

%
\begin{claim} \label{c07}
Let $\gamma \in \ti{\Gamma}$ be such that 
$\pair{\gamma}{h_{j}}=\pair{\gamma}{h_{\omega(j)}}=1$. 
Then, $M_{s_{j}^{\omega} \cdot \gamma}+2c \ge M_{\gamma}''$.
\end{claim}

Write the $\gamma \in \ti{\Gamma}$ in the form 
$\gamma=s_{j}^{\omega}w \cdot \Lambda_{i}$, 
with $w \in W^{\omega}$ and $i \in I$. 
Since $\pair{\gamma}{h_{j}}=\pair{\gamma}{h_{\omega(j)}}=1$, 
it follows from \cite[Proposition~4\,(i) in \S5.2]{MP} that 
$s_{j}w < w$ and $s_{\omega(j)}w < w$. 
Hence, by Remark~\ref{rem:theta}\,(3), 
$w \in W^{\omega}_{-}$. Also, we have
%
%
\begin{align}
\pair{\gamma}{\sigma\tau(\mu_{w})} 
 & =\pair{\gamma}{s_{j}^{\omega} \cdot \mu_{w}}+
    c \pair{\gamma}{h_{j}^{\omega}} 
   = \pair{s_{j}^{\omega} \cdot \gamma}{\mu_{w}}+2c \nonumber \\
 & = \pair{w \cdot \Lambda_{i}}{\mu_{w}}+2c=M_{w \cdot \Lambda_{i}}+2c
   = M_{s_{j}^{\omega} \cdot \gamma}+2c. \label{eq:s7-1}
\end{align}
Assume first that $\pair{\alpha_{j}}{\mu_{w}} \ge c$. 
Since $w \in W^{\omega}_{-}$ as shown above, 
we have $\sigma\tau(\mu_{w}) \in \ti{P}''$ 
by condition (v). Therefore, 
$\pair{\gamma}{\sigma\tau(\mu_{w})} \ge M_{\gamma}''$ 
(see \eqref{eq:tipo02}), and hence 
$M_{s_{j}^{\omega} \cdot \gamma}+2c \ge M_{\gamma}''$
by \eqref{eq:s7-1}.
Assume next that $\pair{\alpha_{j}}{\mu_{w}} < c$. 
Recall that $\mu_{w} \ge_{s_{j}^{\omega}w} \mu_{s_{j}^{\omega}w}$ 
since $\mu_{\bullet}=(\mu_{w})_{w \in W} \in \ggms$. 
Hence it follows that 
%
%
\begin{equation} \label{eq:s7-2}
\pair{\gamma}{\mu_{w}} \ge 
\pair{\gamma}{\mu_{s_{j}^{\omega}w}}=M_{\gamma}.
\end{equation}
Note that since $w \in W^{\omega}_{-} \subset W^{\omega}$, 
we have $\omega(\mu_{w})=\mu_{w}$ by Lemma~\ref{lem:bzmvda}\,(1), 
and hence $\pair{\alpha_{j}}{\mu_{w}}=
 \pair{\alpha_{\omega(j)}}{\mu_{w}}$.
From the assumptions that 
$\pair{\gamma}{h_{j}}=\pair{\gamma}{h_{\omega(j)}}=1$ and 
$c-\pair{\alpha_{j}}{\mu_{w}} > 0$, using the equality 
$\pair{\alpha_{j}}{\mu_{w}}=
 \pair{\alpha_{\omega(j)}}{\mu_{w}}$, 
we have
\begin{align*}
\pair{\gamma}{\sigma\tau(\mu_{w})} 
 & = \pair{\gamma}{s_{j}^{\omega} \cdot \mu_{w}+ch_{j}^{\omega}} \\
 & = \pair{\gamma}{
        \mu_{w}-\pair{\alpha_{j}}{\mu_{w}}h_{j}-
        \pair{\alpha_{\omega(j)}}{\mu_{w}}h_{\omega(j)}+
        ch_{j}^{\omega}} \\
 & = \pair{\gamma}{
        \mu_{w}-\pair{\alpha_{j}}{\mu_{w}}h_{j}^{\omega}+
        ch_{j}^{\omega}} \\
 & = \pair{\gamma}{\mu_{w}}+
     \bigl(\underbrace{c- \pair{\alpha_{j}}{\mu_{w}}}_{> 0}\bigr)
     \underbrace{\pair{\gamma}{h_{j}^{\omega}}}_{=2} 
   > \pair{\gamma}{\mu_{w}} \\[1.5mm]
 & \ge M_{\gamma} \quad \text{by \eqref{eq:s7-2}} \\
 & \ge M_{\gamma}'' \quad \text{by Claim~\ref{c04}}.
\end{align*}
Combining this inequality with \eqref{eq:s7-1}, 
we obtain $M_{s_{j}^{\omega} \cdot \gamma}+2c \ge M_{\gamma}''$.
This proves Claim~\ref{c07}. 

\vsp

By using Claims~\ref{c04}-\ref{c07}, 
we can show that $M_{\gamma}' \ge M_{\gamma}''$ 
for all $\gamma \in \ti{\Gamma}$. 
As an example, let us consider the case in which 
$\gamma(h_{j})=1$ and $\gamma(h_{\omega(j)}) \le 0$. Then, 
\begin{align*}
M_{\gamma}' 
& = 
   \min \bigl(M_{\gamma},\ M_{s_{j}\gamma}+c \bigr)
  \quad \text{by Proposition~\ref{prop:ok}\,(1)} \\
& \ge M_{\gamma}'' 
  \quad \text{by Claims~\ref{c04} and \ref{c05}}.
\end{align*}
The proofs for the other cases are similar.
Thus we have proved that $\ti{P}'' \supset \ti{P}(\mu_{\bullet}')$. 
This completes the proof of Theorem~\ref{thm:pd01}. 
\end{proof}

The proof of the following theorem is similar to 
(and even simpler than) that of Theorem~\ref{thm:pd01}. 
%
%
\begin{thm} \label{thm:pd02}
Keep the notation above. Assume that 
$\ell=2n-1$, $n \in \BZ_{\ge 2}$, and $j=n$. 
Then, the convex polytope $\ti{P}(\mu_{\bullet}')$ is 
the smallest convex polytope $\ti{P}$ in $\ti{\pwp}$ 
with GGMS datum $\mu_{\bullet}''=(\mu_{w}'')_{w \in W} \in \ggms^{\omega}$ 
such that 

\noindent
{\rm (i)} $\mu_{w}''=\mu_{w}$ for all $w \in W^{\omega}_{-}$, 

\noindent
{\rm (ii)} $\mu_{e}'' = \mu_{e}-h_{n}^{\omega}$, 

\noindent
{\rm (iii)} $\ti{P}$ contains $\mu_{w}$ for all $w \in W^{\omega}_{+}$, and 

\noindent
{\rm (iv)} if $w \in W^{\omega}_{-}$ is such that
$\pair{\alpha_{j}}{\mu_{w}} \ge c$, then 
$\ti{P}$ contains $\sigma(\mu_{w})$. 
\end{thm}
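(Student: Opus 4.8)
The plan is to mirror the two-part argument used for Theorem~\ref{thm:pd01}, but the situation simplifies considerably here: since $\Fg$ is of type $A_{2n-1}$, the node $n$ is fixed by $\omega$, so $\omega(n)=n$, $s_{n}^{\omega}=s_{n}$, $h_{n}^{\omega}=h_{n}$, $\tau=\sigma$, and $f_{n}^{\omega}=f_{n}$. In particular $f_{n}^{\omega}P=f_{n}P=P(\mu_{\bullet}')$ is genuinely an MV polytope for $\Fg$, so the Anderson--Mirkovi\'c description of Theorem~\ref{thm:pam} applies to it verbatim; moreover $W^{\omega}_{-}\subset W_{-}$ and $W^{\omega}_{+}\subset W_{+}$. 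As usual, proving that $\ti{P}(\mu_{\bullet}')$ is the smallest convex polytope in $\ti{\pwp}$ with the stated properties breaks into showing (a) that $\ti{P}(\mu_{\bullet}')$ (which lies in $\ti{\pwp}$ since $\mu_{\bullet}'\in\ggms^{\omega}$) satisfies (i)--(iv), and (b) that every $\ti{P}''\in\ti{\pwp}$ satisfying (i)--(iv) contains $\ti{P}(\mu_{\bullet}')$.

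For (a), conditions (i) and (ii) follow, exactly as in the proof of Theorem~\ref{thm:pd01}, from the proof of Proposition~\ref{prop:com} together with Remark~\ref{rem:theta}\,(3): one has $\mu_{e}'=\mu_{e}-h_{n}^{\omega}$ and $\mu_{w}'=\mu_{w}$ for all $w\in W^{\omega}_{-}$. For (iii) and (iv) I would use the inclusions $\ti{P}(\mu_{\bullet}')\supset P(\mu_{\bullet}')=f_{n}P\supset P$, which hold by Remark~\ref{rem:tipo}\,(1) and Corollary~\ref{cor:am}. Since $f_{n}P\supset P$ it contains every vertex $\mu_{w}$ of $P$, in particular those with $w\in W^{\omega}_{+}$, giving (iii); and if $w\in W^{\omega}_{-}\subset W_{-}$ satisfies $\pair{\alpha_{n}}{\mu_{w}}\ge c$, then $\sigma(\mu_{w})\in f_{n}P\subset\ti{P}(\mu_{\bullet}')$ by Theorem~\ref{thm:pam}\,(iv), giving (iv).

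For (b), let $\ti{P}''=\ti{P}(\mu_{\bullet}'')\in\ti{\pwp}$ satisfy (i)--(iv), and set $M_{\bullet}''=D(\mu_{\bullet}'')$. By \eqref{eq:tipo02} it suffices to prove $M_{\gamma}'\ge M_{\gamma}''$ for all $\gamma\in\ti{\Gamma}$. Two auxiliary estimates, the exact counterparts of Claims~\ref{c04} and \ref{c05}, suffice: (A) $M_{\gamma}\ge M_{\gamma}''$ for all $\gamma\in\ti{\Gamma}$, because conditions (i) and (iii) force $\mu_{w}\in\ti{P}''$, hence $\mu_{w}\ge_{w}\mu_{w}''$, for every $w\in W^{\omega}$; and (B) if $\pair{\gamma}{h_{n}}=1$ then $M_{s_{n}\cdot\gamma}+c\ge M_{\gamma}''$. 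For (B) I would write $\gamma=s_{n}w\cdot\Lambda_{i}$, noting that $\gamma\in\ti{\Gamma}$ and $s_{n}\in W^{\omega}$ allow $w$ to be chosen in $W^{\omega}$ and that $\pair{\gamma}{h_{n}}=1>0$ forces $s_{n}w<w$, so $w\in W^{\omega}_{-}$; then $\pair{\gamma}{\sigma(\mu_{w})}=M_{s_{n}\cdot\gamma}+c$ by direct computation, and a case split on whether $\pair{\alpha_{n}}{\mu_{w}}\ge c$ (use condition (iv) to put $\sigma(\mu_{w})$ in $\ti{P}''$) or $\pair{\alpha_{n}}{\mu_{w}}<c$ (use $\mu_{w}\ge_{s_{n}w}\mu_{s_{n}w}$ together with (A)) yields the claim. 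Finally, feeding Proposition~\ref{prop:ok}\,(2) --- that is, $M_{\gamma}'=\min\bigl(M_{\gamma},\,M_{s_{n}\cdot\gamma}+c\bigr)$ when $\pair{\gamma}{h_{n}}=1$ and $M_{\gamma}'=M_{\gamma}$ otherwise --- into (A) and (B) gives $M_{\gamma}'\ge M_{\gamma}''$ in all cases, completing the proof.

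Since this is a transparent specialization of the argument already carried out in full for Theorem~\ref{thm:pd01}, I do not expect a genuine obstacle; the lone point needing a little care is the observation, used in (B), that when one writes $\gamma=s_{n}w\cdot\Lambda_{i}$ the element $w$ can be taken in $W^{\omega}_{-}$ (rather than merely in $W_{-}$), so that condition (iv) of the present theorem --- which, unlike condition (iv) of Theorem~\ref{thm:pd01}, is phrased only for $w\in W^{\omega}_{-}$ --- is indeed applicable. This follows from $s_{n}\in W^{\omega}$ and the minuscule property \eqref{eq:minu}.
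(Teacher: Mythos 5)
Your proof is correct and is, in substance, the specialization of the paper's proof of Theorem~\ref{thm:pd01} to the case $\omega(n)=n$ that the authors sketch by saying the argument is "similar (and even simpler)." You correctly identify the simplifications ($\tau=\sigma$, $s_n^\omega=s_n$, $h_n^\omega=h_n$, $f_n^\omega=f_n$, $W^\omega_\pm\subset W_\pm$), and you rightly flag the one point that is not a purely mechanical restriction of the earlier argument: in the analogue of Claim~5, condition~(iv) of the present theorem is stated only for $w\in W^\omega_-$, so when writing $\gamma=s_nw\cdot\Lambda_i$ you must be able to choose $w\in W^\omega$, which works because $s_n\in W^\omega$; and then $\pair{\gamma}{h_n}=1>0$ forces $s_nw<w$, placing $w$ in $W^\omega_-$. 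Your handling of condition~(iii) via $\ti{P}(\mu_\bullet')\supset P(\mu_\bullet')=f_nP\supset P$ (Corollary~\ref{cor:am}) is in fact a mild streamlining of the corresponding step in the proof of Theorem~\ref{thm:pd01}, but it is entirely sound. The only cosmetic quibble is that the final appeal to \eqref{eq:minu} is not really what makes the $w\in W^\omega_-$ argument go through --- the relevant fact is just that a positive coroot pairs nonnegatively with fundamental weights --- but this does not affect the correctness of the proof.
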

The proof of the following theorem is similar to 
(but, a little more complicated than) 
that of Theorem~\ref{thm:pd01}; we leave it to the reader. 
%
%
\begin{thm} \label{thm:pd03}
Keep the notation above. Assume that 
$\ell=2n$, $n \in \BZ_{\ge 2}$, and $j=n$. 
Then, the convex polytope $\ti{P}(\mu_{\bullet}')$ is 
the smallest convex polytope $\ti{P}$ in $\ti{\pwp}$
with GGMS datum $\mu_{\bullet}''=(\mu_{w}'')_{w \in W} \in \ggms^{\omega}$ 
satisfying the following conditions {\rm (i)-(v)}{\rm:}

\noindent
{\rm (i)} If $w \in W^{\omega}_{-}$, then 
$\mu_{w}''=\mu_{w}$.

\noindent
{\rm (ii)} $\mu_{e}'' = \mu_{e} - h_{n}^{\omega}$.

\noindent
{\rm (iii)} If $w \in W^{\omega}_{+}$, 
then $\mu_{w} \in \ti{P}$.

\noindent
{\rm (iv)} If $w \in W$ is such that 
$s_{j}w < w$ and $\pair{\alpha_{j}}{\mu_{w}} \ge c$, 
then $\sigma(\mu_{w}) \in \ti{P}$. 
Also, if $w \in W$ is such that $s_{\omega(j)}w < w$ and 
$\pair{\alpha_{\omega(j)}}{\mu_{w}} \ge c$, then 
$\tau(\mu_{w}) \in \ti{P}$.

\noindent
{\rm (v)} If $w \in W^{\omega}_{-}$ is such that
$\pair{\alpha_{j}}{\mu_{w}} \ge c$, then 
$\sigma\tau\sigma(\mu_{w}) \in \ti{P}$. 
\end{thm}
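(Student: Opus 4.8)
The plan is to imitate the proof of Theorem~\ref{thm:pd01} step by step, using the factorization $f_{n}^{\omega}=f_{n}f_{\omega(n)}^{2}f_{n}$ in place of $f_{j}^{\omega}=f_{j}f_{\omega(j)}$, and accounting for the extra reflection by replacing $\sigma\tau$ with $\sigma\tau\sigma$ in condition (v). First I would verify that the convex polytope $\ti{P}(\mu_{\bullet}')$ satisfies conditions (i)--(v). Conditions (i) and (ii) follow exactly as before from the proof of Proposition~\ref{prop:com} together with Remark~\ref{rem:theta}\,(3). For conditions (iii) and (iv), I would use the chain
\begin{equation*}
\ti{P}(\mu_{\bullet}') \supset P(\mu_{\bullet}')=f_{n}^{\omega}P=
f_{n}f_{\omega(n)}^{2}f_{n}P \supset f_{n}f_{\omega(n)}^{2}f_{n}P \supset \cdots,
\end{equation*}
extracting the intermediate MV polytopes $f_{n}P$, $f_{\omega(n)}f_{n}P$, $f_{\omega(n)}^{2}f_{n}P$, applying Theorem~\ref{thm:pam} and Corollary~\ref{cor:am} at the appropriate stage, and using the commutation $f_{n}f_{\omega(n)}^{2}f_{n}=f_{\omega(n)}f_{n}^{2}f_{\omega(n)}$ from Remark~\ref{rem:ok} to get the $\sigma$-version from the $f_{n}$-first factorization and the $\tau$-version from the $f_{\omega(n)}$-first one. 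This is where one must be careful with the $c$-shifts, since, as noted in the proof of Proposition~\ref{prop:ok}\,(3), the constant $c$ changes between the successive applications (e.g.\ $c_{\omega(n)}^{M_{\bullet}^{(1)}}=c_{n}^{M_{\bullet}}+1$); I would invoke Lemma~\ref{lem:tp} to control these shifts and confirm that the conditions stated in terms of the single constant $c=c_{n}^{M_{\bullet}}$ are the correct ones.

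For condition (v) I would prove, in analogy with Claims~\ref{c01}--\ref{c03}, that $\sigma\tau\sigma(\mu_{w}) \ge_{v} \mu_{v}'$ for all $v \in W^{\omega}$ whenever $w \in W^{\omega}_{-}$ and $\pair{\alpha_{j}}{\mu_{w}} \ge c$. The key computation is the analogue of Claim~\ref{c01}: for $v \in W^{\omega}_{-}$, one sets $\gamma_{i}:=s_{n}^{\omega}v \cdot \Lambda_{i}$ and uses \cite[Proposition~4\,(i) in \S5.2]{MP} together with Remark~\ref{rem:theta}\,(3) to see that $(s_{n}^{\omega}v)^{-1}\cdot h_{n}$ and $(s_{n}^{\omega}v)^{-1}\cdot h_{\omega(n)}$ are positive coroots of $\Fg$, hence $\pair{\gamma_{i}}{h_{n}},\pair{\gamma_{i}}{h_{\omega(n)}} \in \{0,1\}$ by \eqref{eq:minu}; then $\pair{\gamma_{i}}{\sigma\tau\sigma(\mu_{v})} = M_{s_{n}^{\omega}\cdot\gamma_{i}}+c\pair{\gamma_{i}}{h_{n}^{\omega}}$, where now $h_{n}^{\omega}=2(h_{n}+h_{\omega(n)})$, so $\pair{\gamma_{i}}{h_{n}^{\omega}} \in \{0,2,4\}$. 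Matching these against the four cases of Proposition~\ref{prop:ok}\,(3) gives $\pair{\gamma_{i}}{\sigma\tau\sigma(\mu_{v})} \ge M_{\gamma_{i}}'$ in each case. The passage from $W^{\omega}_{-}$ to $W^{\omega}_{+}$ and back (Claims~\ref{c02} and \ref{c03}) is formally identical, using that $\sigma$, $\tau$ preserve the partial orderings $\ge_{v}$ appropriately and that $h_{n}^{\omega} \le_{v} 0$ when $v \in W^{\omega}_{-}$, together with the assumption $c - \pair{\alpha_{j}}{\mu_{w}} \le 0$.

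For the minimality direction I would reprove the analogues of Claims~\ref{c04}--\ref{c07} for a convex polytope $\ti{P}''=\ti{P}(\mu_{\bullet}'')$ satisfying (i)--(v): Claim~\ref{c04} ($M_{\gamma}\ge M_{\gamma}''$ for $\gamma\in\ti{\Gamma}$) is unchanged; the analogues of Claims~\ref{c05}, \ref{c06} give $M_{s_{n}\cdot\gamma}+c \ge M_{\gamma}''$ when $\pair{\gamma}{h_{n}}=1$ and $M_{s_{\omega(n)}\cdot\gamma}+c \ge M_{\gamma}''$ when $\pair{\gamma}{h_{\omega(n)}}=1$, using condition (iv); and the analogue of Claim~\ref{c07} gives, when $\pair{\gamma}{h_{n}}=\pair{\gamma}{h_{\omega(n)}}=1$, the bound $M_{s_{n}^{\omega}\cdot\gamma}+2c \ge M_{\gamma}''$ from condition (v) applied with $\sigma\tau\sigma$ — here one writes $\gamma=s_{n}^{\omega}w\cdot\Lambda_{i}$ with $w \in W^{\omega}_{-}$ and computes $\pair{\gamma}{\sigma\tau\sigma(\mu_{w})}=\pair{s_{n}^{\omega}\cdot\gamma}{\mu_{w}}+c\pair{\gamma}{h_{n}^{\omega}}$, noting that $\pair{\gamma}{h_{n}^{\omega}}=2\pair{\gamma}{h_{n}}+2\pair{\gamma}{h_{\omega(n)}}=4$ in this case, but the relevant quantity arising in Proposition~\ref{prop:ok}\,(3) involves only $2c$ because of how the minimum there is organized; I would check this bookkeeping carefully against \eqref{eq:ok3}, with the split-into-cases argument ($\pair{\alpha_{j}}{\mu_{w}}\ge c$ versus $<c$) exactly as in Claim~\ref{c07}. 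Then combining these claims with the five cases of \eqref{eq:ok3} yields $M_{\gamma}' \ge M_{\gamma}''$ for all $\gamma\in\ti{\Gamma}$, hence $\ti{P}'' \supset \ti{P}(\mu_{\bullet}')$.

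The main obstacle I anticipate is not any single conceptual point but the case bookkeeping around the constant $c$: because $\Fg^{\omega}$ is non-simply-laced at the node $n$ (with $h_{n}^{\omega}=2(h_{n}+h_{\omega(n)})$ and $\ha{a}_{n,n-1}=-2$), the intermediate constants $c_{\omega(n)}^{M_{\bullet}^{(i)}}$ drift by $+1$ at one of the four steps of $f_{n}f_{\omega(n)}^{2}f_{n}$, and one must use Lemma~\ref{lem:tp} (the tropical Pl\"ucker relation at $(w,n,n+1)$) to see that everything recombines so that conditions (iv) and (v) can be phrased uniformly in terms of the original $c$ and the reflections $\sigma$, $\tau$ with $\sigma\tau\sigma=\tau\sigma\tau$. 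Verifying that the $\sigma\tau\sigma$ in condition (v) — rather than $\sigma\tau$ — is exactly what is forced by the third entry of the minimum in \eqref{eq:ok3} is the delicate step, and I would treat it by tracking the vertex $\mu_{w}$ through the chain $P \to f_{n}P \to f_{\omega(n)}f_{n}P \to f_{\omega(n)}^{2}f_{n}P \to f_{n}f_{\omega(n)}^{2}f_{n}P$, applying Theorem~\ref{thm:pam}\,(iv) at each stage.
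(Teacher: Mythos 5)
Your approach is correct in outline and is exactly the one the paper suggests (the paper in fact gives no proof of Theorem~\ref{thm:pd03}, remarking only that it is ``similar to, but a little more complicated than,'' the proof of Theorem~\ref{thm:pd01}). Imitating that proof with the factorization $f_n^{\omega}=f_nf_{\omega(n)}^2f_n$ in place of $f_jf_{\omega(j)}$, substituting $\sigma\tau\sigma$ for $\sigma\tau$, and using Lemma~\ref{lem:tp} to control the $c$-shifts $c_{\omega(n)}^{M_\bullet^{(1)}}=c_n^{M_\bullet}+1$, $c_{\omega(n)}^{M_\bullet^{(2)}}=c_n^{M_\bullet}$, $c_n^{M_\bullet^{(3)}}=c_n^{M_\bullet}$ is precisely what is required, and your identification of these as the sources of extra bookkeeping is accurate.

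There is, however, a concrete gap in your analogue of Claim~\ref{c07}, which is exactly where the ``delicate step'' you anticipate lies. You set up the double-reflection claim for $\gamma \in \ti{\Gamma}$ with $\pair{\gamma}{h_n}=\pair{\gamma}{h_{\omega(n)}}=1$, compute $\pair{\gamma}{h_n^{\omega}}=4$ (since $h_n^{\omega}=2(h_n+h_{\omega(n)})$ in type $A_{2n}$), note a mismatch with the coefficient $2c$ in \eqref{eq:ok3}, and defer the resolution to ``how the minimum there is organized.'' The actual resolution is that this case is \emph{vacuous}: in $A_{2n}$ the nodes $n$ and $\omega(n)=n+1$ are adjacent, and since every fundamental weight of $\Fg$ is minuscule, a weight $\gamma=w\cdot\Lambda_i$ cannot satisfy $\pair{\gamma}{h_n}=\pair{\gamma}{h_{n+1}}=1$ simultaneously (it would force $n+1$ both in and out of the corresponding column set). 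This is precisely why \eqref{eq:ok3} has no entry for that pair of pairings; it falls into the ``otherwise'' slot with $M_\gamma'=M_\gamma$, already covered by the analogue of Claim~\ref{c04}. The cases that actually carry a $+2c$ term in \eqref{eq:ok3} are $\bigl(\pair{\gamma}{h_n},\pair{\gamma}{h_{\omega(n)}}\bigr)=(0,1)$ and $(1,0)$, where $\pair{\gamma}{h_n^{\omega}}=2$, matching $2c$ exactly. To close the claim in those cases you also need the small identity that $s_ns_{\omega(n)}\cdot\gamma=s_n^{\omega}\cdot\gamma$ (when $\pair{\gamma}{h_n}=0$) and $s_{\omega(n)}s_n\cdot\gamma=s_n^{\omega}\cdot\gamma$ (when $\pair{\gamma}{h_{\omega(n)}}=0$), which hold because one of $s_n$, $s_{\omega(n)}$ fixes $\gamma$; this lets you identify $\pair{\gamma}{\sigma\tau\sigma(\mu_w)}=M_{s_n^{\omega}\cdot\gamma}+2c$ with the third entry of the relevant minimum in \eqref{eq:ok3}. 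The same correction propagates to your analogue of Claim~\ref{c01}: the possible values of $\pair{\gamma_i}{h_n^{\omega}}$ there are $\{0,2\}$, not $\{0,2,4\}$.
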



{\small
\setlength{\baselineskip}{13pt}
\renewcommand{\refname}{References}

}

\end{document}